\def\squiggly{\bgroup \markoverwith{\textcolor{red}{\lower3.5\p@\hbox{\sixly \char58}}}\ULon}
\newtheorem{theorem}[subsection]{Theorem}
\newtheorem{proposition}[subsection]{Proposition}
\newtheorem{definition}[subsection]{Definition}
\newtheorem{claim}[subsection]{Claim}
\newtheorem{remark}[subsection]{Remark}
\def\loccit{\emph{loc. cit. }}
\def\fsl{{\mathfrak{sl}}}
\def\fgl{{\mathfrak{gl}}}
\def\res{\textrm{Res}}
\def\BN{{\mathbb{N}}}
\def\BQ{{\mathbb{Q}}}
\def\BZ{{\mathbb{Z}}}
\def\CA{{\mathcal{A}}}
\def\CB{{\mathcal{B}}}
\def\CM{{\mathcal{M}}}
\def\CP{{\mathcal{P}}}
\def\CS{{\mathcal{S}}}
\def\tCA{\widetilde{\CA}}
\def\tR{\widetilde{R}}
\def\wCA{\widehat{\CA}}
\def\wCS{\widehat{\CS}}
\def\ph{\varphi}
\def\e{\varepsilon}
\def\vs{\varsigma}
\def\and{\textrm{ }\&\textrm{ }}
\def\sym{\textrm{Sym}}
\def\esym{\emph{Sym}}
\def\eop{\emph{op}}
\def\sym{\textrm{Sym}}
\def\nn{{{\BN}}^n}
\def\zz{{{{\mathbb{Z}}}^n}}
\def\uui{{U_q(\dot{\fgl}_1)}}
\def\uu{{U_q(\dot{\fgl}_n)}}
\def\uum{{U_q^-(\dot{\fgl}_n)}}
\def\uug{{U_q^\geq(\dot{\fgl}_n)}}
\def\uul{{U_q^\leq(\dot{\fgl}_n)}}
\def\UU{{U_{q,\oq}(\ddot{\fgl}_n)}}
\def\wuu{{\widehat{U}_{q}(\dot{\fgl}_n)}}
\def\wUU{{\widehat{U}_{q,\oq}(\ddot{\fgl}_n)}}
\def\bd{{\mathbf{d}}}
\def\br{{\mathbf{r}}}
\def\bs{{\boldsymbol{\vs}}}
\def\oq{{\overline{q}}}
\def\bari{\bar{i}}
\def\barj{\bar{j}}
\def\bA{\bar{A}}
\def\bB{\bar{B}}
\def\bF{\bar{F}}
\def\bR{\bar{R}}
\def\bW{\bar{W}}
\def\End{\text{End}}
\def\Res{\text{Res}}
\def\eEnd{\emph{End}}
\def\tCA{\widetilde{\CA}}
\def\zzz{\frac {\BZ^2}{(n,n)\BZ}}
\def\bF{\bar{F}}
\def\bQ{\bar{Q}}
\def\barc{\bar{c}}
\def\hdeg{\text{hdeg }}
\def\vdeg{\text{vdeg }}
\def\oo{\overline}
\def\fff{\BQ(q,\oq^{\frac 1n})}
\begin{document}
	
\title[Deformed $W$--algebras in type $A$ for rectangular nilpotent]{\Large{\textbf{Deformed $W$--algebras in type $A$ for rectangular nilpotent}}}
	
\author[Andrei Negu\cb t]{Andrei Negu\cb t}
\address{MIT, Department of Mathematics, Cambridge, MA, USA}
\address{Simion Stoilow Institute of Mathematics, Bucharest, Romania}
\email{andrei.negut@gmail.com}
	
\maketitle
	
\begin{abstract} For any $n,r \in \BN$, we construct an algebra $\CP_n^r$ via generators and quadratic relations, and show that it deforms the $W$--algebra of $\fgl_{nr}$ with respect to a nilpotent with Jordan block decomposition $r+...+r$. We introduce a surjective morphism from half of the quantum toroidal algebra of $\fgl_n$ to $\CP_n^r$, and show that the action of the quantum toroidal algebra on the $K$--theory groups of the moduli spaces of parabolic sheaves factors through $\CP_n^r$.
		
\end{abstract}

\section{Introduction}

\subsection{} Fix natural numbers $n, r$. We recall the quantum toroidal algebra $\UU$:
\begin{equation} 
\label{eqn:triangular 1}
\UU \cong \CS^+ \otimes \uui^{\otimes n} \otimes \CS^-
\end{equation}
where $\CS^\pm$ are the usual trigonometric type $\widehat{A}_n$ shuffle algebras (\cite{E, FO}) that we will recall in Section \ref{sec:shuf 2}. We will also need another triangular decomposition (\cite{PBW, Tale}):
\begin{equation} 
\label{eqn:triangular 2}
\UU \cong \CA^+ \otimes \uu \otimes \CA^-
\end{equation} 
where $\CA^\pm$ are a new kind of shuffle algebra, that we will recall in Section \ref{sec:shuf 1}. In \cite{RS}, the authors construct a surjective homomorphism from the Yangian of $\fsl_n$ to type $A$ finite $W$--algebras for rectangular nilpotent (generalized by \cite{BK} to arbitrary nilpotent by constructing certain shifted Yangians). In the present paper, we seek the $q$--deformed affinization of this principle. Explicitly, we define certain elements: 
\begin{equation}
\label{eqn:w gens}
W_{ij}^{(k)} \in \wCA^+, \quad \forall (i,j) \in \zzz, \ k \in \BN 
\end{equation}
(the completion is defined in \eqref{eqn:completion}) and show that they satisfy the quadratic relations \eqref{eqn:w rels}. If we let $\CP_n^\infty$ be the algebra generated by abstract symbols \eqref{eqn:w gens} modulo the corresponding quadratic relations, then our main result is: \\

\begin{theorem}
\label{thm:main 1}

There exists an isomorphism $\CP_n^\infty \cong \wCA^+$. \\ 	
	
\end{theorem}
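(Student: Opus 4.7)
The plan is to define the natural homomorphism $\Phi: \CP_n^\infty \to \wCA^+$ by sending each abstract symbol $W_{ij}^{(k)}$ to the concrete element of the same name defined in \eqref{eqn:w gens}. This is well-defined as an algebra map precisely because the concrete elements in $\wCA^+$ were already shown to satisfy the quadratic relations \eqref{eqn:w rels} defining $\CP_n^\infty$. The goal is then to show $\Phi$ is bijective.

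For surjectivity, I would use the shuffle-algebra description of $\CA^+$ recalled in Section \ref{sec:shuf 1}. Concretely, one identifies a set that (topologically) generates $\wCA^+$ and then checks that it lies in the image of $\Phi$. A natural candidate is a minimal subfamily of the $W_{ij}^{(k)}$---say those with $k = 0$, or some analogous ``boundary'' family---from which shuffle manipulations can be used to recover the standard generators of $\CA^+$. Once such a generating family is produced inside the image, surjectivity follows by closing up under products and infinite sums allowed by the completion.

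For injectivity, the central tool is a PBW-type argument. One fixes a total order on the index set $\{(i,j,k) \in \zzz \times \BN\}$ and uses the quadratic relations to rewrite every product of $W$'s as a linear combination of ordered monomials; these form a (topological) spanning set of $\CP_n^\infty$. The task then reduces to proving that the images $\Phi(\text{ordered monomial})$ are linearly independent in $\wCA^+$. To do so, one introduces a grading on both sides---on $\wCA^+$ coming from the natural lattice grading of the shuffle algebra, and on $\CP_n^\infty$ from the degree of each $W_{ij}^{(k)}$---with respect to which $\Phi$ is homogeneous with finite-dimensional graded pieces, and then compares graded characters.

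The main obstacle is this last linear-independence step, equivalently the equality of graded characters of $\CP_n^\infty$ and $\wCA^+$. Establishing it will likely require either a direct Hilbert series computation on each side (using explicit bases for the shuffle algebra on one side and the ordered-monomial spanning set on the other) or, alternatively, the construction of a one-sided inverse---extracting each $W_{ij}^{(k)}$ from a suitable residue or iterated coproduct operation inside $\wCA^+$. Both routes involve substantial bookkeeping with shuffle integrals, and the bulk of the technical work is expected to be concentrated here.
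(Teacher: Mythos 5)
Your overall framework matches the paper's: define $\Phi:\CP_n^\infty \to \wCA^+$ by $W_{ij}^{(k)}\mapsto W_{ij}^{(k)}$, verify it respects the quadratic relations \eqref{eqn:w rels} (which is indeed the bulk of the work, done in the paper via the non-degenerate pairing \eqref{eqn:pairing a} and the residue computations of Proposition \ref{prop:residues}), and then show bijectivity. But the mechanism you propose for bijectivity misses the decisive structural point, and as a result you have flagged a ``main obstacle'' that in fact does not arise.

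First, on the $\CP_n^\infty$ side there is no linear-independence step to perform at all: equation \eqref{eqn:def p} \emph{defines} $\CP_n^\infty$ as the (completed) span of the ordered monomials $W_{i_1j_1}^{(k_1)}\cdots W_{i_tj_t}^{(k_t)}$ with $\frac{i_1-j_1}{k_1}\geq\cdots\geq\frac{i_t-j_t}{k_t}$, so these are a topological basis by construction. The content of Subsection \ref{sub:third} is only that the relations make this vector space into an associative algebra, not that one needs to re-derive a spanning set. Second, on the $\wCA^+$ side, the paper already has an explicit topological basis consisting of ordered $F$-monomials (formulas \eqref{eqn:linear basis 1}, \eqref{eqn:completion}). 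The definition \eqref{eqn:def w} expresses each $W_{ij}^{(k)}$ as an ordered $F$-sum, and one checks (this is the analogue of formula (3.47) of \cite{W surf}) that the ordered $W$-monomials are upper triangular against the ordered $F$-monomials, with invertible diagonal. Hence the ordered $W$-monomials form a topological basis of $\wCA^+$ as well. Since $\Phi$ takes a basis to a basis, it is an isomorphism; no graded-character computation and no construction of a one-sided inverse via residues or iterated coproducts is needed. Your proposed character comparison would be strictly harder bookkeeping than the triangularity observation, and although it could in principle work, it is not the route taken.

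One small factual slip: you suggest a generating family with ``$k=0$,'' but $W_{ij}^{(k)}$ is only defined for $k>0$ (Definition \ref{def:series}); and surjectivity does not require singling out a special generating subfamily, since the triangularity argument above gives surjectivity and injectivity simultaneously.
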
 

\noindent We then define the quotient:
\begin{equation}
\label{eqn:quotient}
\CP_n^r = \CP_n^\infty \Big / \left( W_{ij}^{(k)} \right)_{(i,j) \in \zzz}^{k > r}
\end{equation}
and identify $\CP_n^r$ with that quotient of (the completed half of) the quantum toroidal algebra which was shown in \cite{Par} to act on the $K$--theory groups of moduli spaces of sheaves on the plane with parabolic structure of type $(r,...,r)$. Thus, we obtain: \\

\begin{theorem} 
\label{thm:main 2}

There exists an action: 
\begin{equation}
\label{eqn:k-theory}
\CP_n^r \curvearrowright \bigoplus_{d_1,...,d_n = 0}^\infty K(\CM^{(r,...,r)}_{(d_1,...,d_n)})
\end{equation}
(we refer to \loccit for a review of the moduli space in the right-hand side). \\

\end{theorem}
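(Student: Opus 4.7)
The plan is to assemble Theorem \ref{thm:main 2} from Theorem \ref{thm:main 1} together with the geometric representation theory of \cite{Par}. By Theorem \ref{thm:main 1} we have an isomorphism $\CP_n^\infty \cong \wCA^+$, so that defining an action of $\CP_n^r$ on the direct sum in \eqref{eqn:k-theory} reduces to two tasks: (a) produce an action of $\wCA^+$ on that space, and (b) prove that the ideal $(W_{ij}^{(k)})_{k>r}$ defining the quotient in \eqref{eqn:quotient} acts by zero.

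For task (a), I would invoke the triangular decomposition \eqref{eqn:triangular 2} to realize $\CA^+$ as a subalgebra of $\UU$, then use the action of (a suitable completion of) the quantum toroidal algebra on $\bigoplus_{d_\bullet} K(\CM^{(r,\ldots,r)}_{d_\bullet})$ constructed in \cite{Par}. The completion $\wCA^+$ in \eqref{eqn:completion} is designed precisely so that its infinite sums converge when acting on each fixed K-theory component (only finitely many terms contribute to a given weight space, since the grading matches the sheaf invariants $d_1,\ldots,d_n$), and this convergence is what allows the $\wCA^+$ action to be well-defined.

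For task (b), which is the main obstacle, I would translate the generators $W_{ij}^{(k)} \in \wCA^+$ into explicit geometric operators on $K(\CM^{(r,\ldots,r)}_{d_\bullet})$, and then argue that for $k > r$ these operators vanish. The expected mechanism is that $W_{ij}^{(k)}$ corresponds, under the identification of $\wCA^+$ with the completed shuffle algebra, to a symmetric rational function whose action on K-theory can be expressed via tautological classes on the parabolic moduli space; the parameter $k$ controls the number of ``layers'' in a certain filtration/flag, and the rectangular parabolic structure of width $r$ forces this number to be at most $r$. This is precisely the vanishing statement that identifies $\CP_n^r$ with the quotient of (half of) the quantum toroidal algebra described in \cite{Par}, so the heart of the argument is to match the explicit quadratic relations of $\CP_n^r$ with the relations among the geometric operators constructed there.

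Concretely, the order of steps would be: first recall the geometric shuffle-type formulas from \cite{Par} for the action of $\wCA^+$; second, push the elements $W_{ij}^{(k)}$ through the isomorphism of Theorem \ref{thm:main 1} to obtain explicit shuffle representatives; third, compute these representatives' action on a fixed $K(\CM^{(r,\ldots,r)}_{d_\bullet})$ and observe the vanishing for $k>r$ coming from the rectangular constraint. The routine portions are the convergence bookkeeping for $\wCA^+$ and the matching of gradings; the substantive portion is the dimension-counting/vanishing argument, which is where the specific shape $(r,\ldots,r)$ of the parabolic structure enters in an essential way.
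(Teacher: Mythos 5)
Your two-step skeleton is sound and does match the spirit of the paper's proof, but you misplace where the new work lies. The paper does not re-derive any geometric vanishing: Theorem 1.2 of \cite{Par} is cited wholesale for the statement that a certain explicit quotient of (the completed half of) $\UU$---namely $\phi(\wCA^+)/\bigl(\tilde{W}_{ij}^{(k)}\bigr)_{k>r}$---acts on the parabolic K-theory. What the paper actually proves in order to deduce Theorem \ref{thm:main 2} from this is the purely algebraic identity of Proposition \ref{prop:in fact}, namely $\phi\bigl(W_{ij}^{(k)}\bigr)=\tilde{W}_{ij}^{(k)}$, matching the abstract generators of $\CP_n^\infty$ (after Theorem \ref{thm:main 1}) with the specific shuffle/PBW elements already defined in \cite{Par}. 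Once that identity is in hand, the isomorphism $\CP_n^r\cong\phi(\wCA^+)/(\tilde{W}_{ij}^{(k)})_{k>r}$ is immediate, and the action exists by citation. Your ``push $W_{ij}^{(k)}$ through Theorem \ref{thm:main 1}'' step gestures at this, but you treat the endpoint as a fresh geometric computation rather than as a comparison with a \emph{pre-existing} family of elements; consequently you flag the dimension-counting vanishing as the hard part, when in the paper's organization that part is a black box and the genuine content is the combinatorial identity equating the two families of generators (which is established via \eqref{eqn:eq plus} and \eqref{eqn:eq minus} in Section \ref{sec:shuf 2}).

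There is also a small imprecision in task (a): it is not that $\UU$ itself acts and $\CA^+$ inherits an action by restriction along the embedding $\phi$; rather, what \cite{Par} constructs directly is an action of the quotient $\phi(\wCA^+)/(\tilde{W}_{ij}^{(k)})_{k>r}$, so tasks (a) and (b) are not really separable in the paper's route --- the quotient is what acts, not a bigger algebra that then factors. Your remark about the completion $\wCA^+$ being adapted to the grading so that sums converge on each K-theory component is correct and consistent with how the paper sets up \eqref{eqn:completion}, but this point is handled inside \cite{Par} and is not re-argued here. If you want your proposal to be self-contained you would indeed need to redo that geometric vanishing; as a proof of Theorem \ref{thm:main 2} given the references, the indispensable missing step is the explicit identification of generators, i.e.\ Proposition \ref{prop:in fact}.
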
 

\noindent Combining Theorems \ref{thm:main 1} and \ref{thm:main 2} provides the deformed affine version of both the main result of \cite{RS} and of the AGT conjecture for sheaves with parabolic structure of type $(r,...,r)$, due to the following fact about the algebra $\CP_n^r$: \\

\begin{theorem}
\label{thm:main 3} 

The algebra $\CP_n^r$ deforms the universal enveloping algebra of the $W$--algebra of $\fgl_{nr}$ associated with nilpotent of Jordan type $r+...+r$. \\

\end{theorem}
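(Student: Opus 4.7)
The plan is to match $\CP_n^r$ with a $q$-deformed parabolic presentation of $U(W(\fgl_{nr}, e_{r^n}))$, where $e_{r^n}$ denotes the nilpotent of Jordan type $r+\dots+r$, and to verify that a suitable classical limit recovers the universal enveloping algebra of the $W$-algebra itself. This would realize the $q$-deformed affine upgrade of the identification in \cite{RS,BK} of finite rectangular $W$-algebras with truncations of the Yangian $Y(\fgl_n)$ by the ideal generated by $T_{ij}^{(k)}$ with $k > r$.

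First, I would recall a parabolic presentation of $W(\fgl_{nr}, e_{r^n})$ (and hence of its universal enveloping algebra): generators are matrix currents whose Fourier modes $T_{\bar{\imath}\bar{\jmath}}^{(k,\ell)}$ are indexed by $\bar{\imath}, \bar{\jmath} \in \{1,\dots,n\}$, a level $0 \le k \le r$, and a loop mode $\ell \in \BZ$, subject to quadratic RTT-type relations. The truncation $k \le r$ on the $W$-algebra side reflects the Jordan block size, and is the classical shadow of the quotient \eqref{eqn:quotient} on the $\CP_n^r$ side. Via the natural splitting $\zzz \simeq \BZ \times (\BZ/n\BZ)$ given by $(i,j) \mapsto (i-j, \bar{\jmath})$, the generators $W_{ij}^{(k)}$ in \eqref{eqn:w gens} acquire precisely this indexing, with $i-j$ playing the role of the loop mode $\ell$ and $\bar{\imath}, \bar{\jmath}$ supplying the matrix index.

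Second, I would introduce a formal deformation parameter by writing $q = e^\hbar$ with a compatible expansion of $\oq$, appropriately rescale the generators $W_{ij}^{(k)}$, and expand the quadratic relations \eqref{eqn:w rels} in powers of $\hbar$. Because both presentations are quadratic-homogeneous, the verification reduces to a coefficient-level comparison of structure constants. The leading order at $\hbar = 0$ should reproduce the defining commutation relations of $U(W(\fgl_{nr}, e_{r^n}))$ in its parabolic presentation, while the higher-order corrections encode the $q$-deformation and can be absorbed into the RTT-type $R$-matrix on the $\CP_n^r$ side.

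The main obstacle will be reconciling the two distinct loop directions present in the setup: the vertical Heisenberg factor $\uui^{\otimes n}$ from the triangular decomposition \eqref{eqn:triangular 1}, encoded by the imaginary part of $\wCA^+$, must be correctly identified with the Heisenberg subalgebra of $W(\fgl_{nr}, e_{r^n})$ that arises from the center of the Levi factor of the centralizer of $e_{r^n}$. A secondary but nontrivial technical point is verifying non-degeneracy of the classical limit, namely that no relation of \eqref{eqn:w rels} collapses at $\hbar = 0$ beyond those expected in $U(W(\fgl_{nr}, e_{r^n}))$; I would hope to derive this from the fact that $\CP_n^\infty \cong \wCA^+$ (Theorem \ref{thm:main 1}), which guarantees that the quadratic relations define a flat family in the deformation parameters.
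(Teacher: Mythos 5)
The approach you propose diverges from the paper's and contains a genuine gap: you never construct a concrete map from $\CP_n^r$ to any algebra in which the $W$--algebra is visible. The paper's entire proof hinges on the quantum Miura transformation \eqref{eqn:miura}: one builds the auxiliary algebra $U^{(r)}$ from $r$ ``commuting'' copies $\Lambda^{(1)}(x),\dots,\Lambda^{(r)}(x)$ of the quantum affine group, defines currents $\bW^{(k)}(x)$ as coefficients of $\prod_{i=1}^r (\Lambda^{(i)}(xp^{2(r-i)}) - D)$, and then the bulk of the proof (Claims \ref{claim:precise 1}--\ref{claim:precise 2}, an induction on $k$) verifies that these concrete $\bW^{(k)}(x)$ satisfy the relations \eqref{eqn:w rels}. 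The vanishing $\bW^{(k)}(x)=0$ for $k>r$ is automatic from the product structure, so the map $W_{ij}^{(k)} \mapsto \bW_{ij}^{(k)}$ descends to $\CP_n^r$. Proposition \ref{prop:miura} then handles the classical limit: setting $q=e^\e$, $p=e^{\e\beta}$, the Miura factorization degenerates to the known quantum Miura transformation of \cite{AM,EP}, whose coefficients generate $U(W(\fgl_{nr}))$ for rectangular nilpotent. Your proposal, by contrast, asks to ``expand the quadratic relations \eqref{eqn:w rels} in $\hbar$'' and match them directly against a ``parabolic presentation'' of $U(W(\fgl_{nr},e_{r^n}))$. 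This cannot work as stated for two reasons.

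First, the $W$--algebra presentations of Brundan--Kleshchev type are \emph{not} purely quadratic (they involve higher-order Serre-type relations), so the premise that ``both presentations are quadratic-homogeneous'' and that the comparison ``reduces to a coefficient-level comparison of structure constants'' is false; without a realization like \eqref{eqn:miura}, one would also need to produce the higher relations from \eqref{eqn:w rels}, which is not addressed. Second, your flatness argument is circular: you invoke $\CP_n^\infty \cong \wCA^+$ (Theorem \ref{thm:main 1}) to conclude that the truncation $\CP_n^r$ is a flat family, but Theorem \ref{thm:main 1} says nothing about the ideal $(W_{ij}^{(k)})_{k>r}$ and cannot by itself guarantee that the quotient is free over the deformation parameters. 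The paper sidesteps this by exhibiting $U^{(r)}$ with an explicit PBW-type basis and showing the $\bW^{(k)}$--generated subalgebra already matches the Miura realization of the $W$--algebra; flatness is inherited from that concrete model rather than asserted abstractly. In short, the missing idea is the Miura map into $U^{(r)}$, which both verifies the relations and supplies the classical limit.
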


\noindent Deformed $W$--algebras for principal nilpotent, i.e. corresponding to $r=1$ in our notation, have been studied in \cite{AKOS}, \cite{FF}, \cite{FR}. In fact, the results in the present paper can be interpreted as a generalization of the generators-and-relations presentation of \cite{AKOS} from the case of principal nilpotent to that of rectangular nilpotent. \\

\noindent In the case of nilpotent of any Jordan type $r_1+...+r_n$, we expect the appropriate analogue of $\CP_n^r$ to be a certain quotient of the \underline{shifted} quantum toroidal algebra of $\fgl_{n}$. We give a candidate for such a quotient in formula (1.7) of \cite{Par}, but the only evidence we have for this choice is the fact that it acts on the $K$--theory of moduli spaces of sheaves on the affine plane with parabolic structure of type $(r_1,...,r_n$). \\

\noindent In the non-deformed case, we note the parallel development of \cite{U}, where the author constructs a surjective homomorphism from the affine Yangian of $\fsl_n$ to $W$--algebras of type $A$ for rectangular nilpotent. While we expect the construction of \loccit to match ours, bridging the two languages is far from trivial. \\

\noindent I would like to thank Tomoyuki Arakawa, Mikhail Bershtein, Pavel Etingof, Boris Feigin, Roman Gonin, Victor Kac, Ryosuke Kodera, 
Tomas Proch\'azka, Junichi Shiraishi and Alexander Tsymbaliuk for valuable conversations about $W$--algebras. I gratefully acknowledge NSF grants DMS--1760264 and DMS--1845034, as well as support from the Alfred P. Sloan Foundation. \\

\section{Deformed $W$--algebras}
\label{sec:w}

\subsection{} Let $V$ be an $n$--dimensional vector space with a fixed basis. We will write:
$$
E_{ij} \in \text{Mat}_{n \times n} \cong \End(V)
$$ 
$\forall i,j \in \{1,...,n\}$ for the elementary matrix with a single $1$ at the intersection of row $i$ and column $j$. For a variable $x$, we will generalize the notation above to:
\begin{equation}
\label{eqn:notation 0}
E_{ij} = E_{\bari \barj} x^{\left \lfloor \frac {i-1}n \right \rfloor - \left \lfloor \frac {j-1}n \right \rfloor} \in \End(V)[x^{\pm 1}]
\end{equation}
for all $i,j \in \BZ$, where:
$$
\bari = i - n \left \lfloor \frac {i-1}n \right \rfloor 
$$
denotes the residue of any integer $i$ in the set $\{1,...,n\}$. Consider the $R$--matrix:
\begin{equation}
\label{eqn:def r}
R (x) \in \End_{\BQ(q)} (V \otimes V)
\end{equation}
given by:
\begin{equation}
\label{eqn:r}
R(x) = \sum_{1\leq i,j \leq n} E_{ii} \otimes E_{jj} \left(\frac {q - xq^{-1}}{1-x} \right)^{\delta_i^j} + (q - q^{-1}) \sum_{1 \leq i \neq j \leq n} E_{ij} \otimes E_{ji} \frac {x^{\delta_{i<j}}}{1-x}
\end{equation}
It is well-known that $R(x)$ satisfies the Yang-Baxter equation with parameter:
\begin{equation}
\label{eqn:ybe}
R_{12} \left( \frac {z_1}{z_2} \right) R_{13} \left( \frac {z_1}{z_3} \right) R_{23} \left( \frac {z_2}{z_3} \right) = R_{23} \left( \frac {z_2}{z_3} \right)  R_{13} \left( \frac {z_1}{z_3} \right) R_{12} \left( \frac {z_1}{z_2} \right) 
\end{equation}
as an equality in $\End(V^{\otimes 3})(z_1,z_2,z_3)$ (where $R_{12} = R \otimes 1$ etc). We note that:
\begin{equation}
\label{eqn:unitary}
R_{12} \left( \frac {z_1}{z_2} \right) R_{21} \left( \frac {z_2}{z_1} \right) = f \left( \frac {z_1}{z_2} \right) \cdot \text{Id}_{V \otimes V}
\end{equation}
where $R_{12} = R$, $R_{21} = R$ with the two tensor factor switched, and:
$$
f(x) = \frac {(1-xq^2)(1-xq^{-2})}{(1-x)^2}
$$

\subsection{} 
\label{sub:symbols} 

Let us consider symbols $W^{(k)}_{ij}$ for all $k \in \BN$ and $(i,j) \in \zzz$, and define:
\begin{equation}
\label{eqn:def p}
\CP_n^\infty = \bigoplus_{k,d \in \BZ} \mathop{\prod^{i_1-j_1+...+i_t-j_t = d}_{\frac {i_1-j_1}{k_1} \geq ... \geq \frac {i_t-j_t}{k_t}}}^{k_1+...+k_t = k} \BQ(p,q) \cdot W_{i_1j_1}^{(k_1)} ... W_{i_tj_t}^{(k_t)}
\end{equation}
as a vector space. In other words, an element of $\CP_n^\infty$ is an infinite sum of symbols $W_{i_1j_1}^{(k_1)} ... W_{i_tj_t}^{(k_t)}$ with $\frac {i_a-j_a}{k_a}$ in non-increasing order, but with $k_1+...+k_t$ and $i_1-j_1+...+i_t-j_t$ taking finitely many values. Consider the generating series:
\begin{equation}
\label{eqn:series}
W^{(k)}(x) = \sum_{(i,j) \in \zzz} W^{(k)}_{ij} \cdot E_{ij}
\end{equation}
which takes values in $\CP_n^\infty \otimes \End(V) [[x^{\pm 1}]]$. Let us write $(12) \in \text{End}(V \otimes V)$ for the permutation, and $\delta(z) = \sum_{k \in \BZ} z^k$ for the delta function doubly-infinite series. \\

\begin{definition} 
\label{def:w}
	
We make the $\BQ(p,q)$ vector space \eqref{eqn:def p} into an associative algebra with unit by imposing the following relations for all natural numbers $k \leq k'$:
$$
R_{12} \left(\frac xy \cdot p^{2(k-k')} \right) W^{(k)}_1(x) R_{21} \left(\frac yx \cdot p^{2k'}\right) W^{(k')}_2(y) \prod_{i=k'-k+1}^{k'-1} f \left(\frac xy \cdot p^{-2i} \right) - 
$$
$$
- W^{(k')}_2(y) R_{12} \left(\frac xy \cdot p^{2k}\right) W^{(k)}_1(x) R_{21} \left(\frac yx \right) \prod_{i=1}^{k-1} f \left( \frac yx \cdot p^{-2i} \right) = 
$$
$$
= \sum_{a \in \{-k',...,k-k'-1\} \sqcup \{1,...,k\}} \emph{sgn}(a) \cdot \delta \left(\frac {x}{y} \cdot p^{2a} \right) 
$$
\begin{equation}
\label{eqn:w rels}
(q^{-1}-q) (12) \cdot \left[ W^{(k'+a)}_1(x) R_{21} \left(p^{2k}\right) W^{(k-a)}_2(y) \prod_{i=1}^{k-1} f \left( p^{-2i} \right) \right]
\end{equation}
as an equality in $\CP_n^\infty \otimes \emph{End}_{\BQ(p,q)}(V \otimes V)[[x^{\pm 1}, y^{\pm 1}]]$. \footnote{In formula \eqref{eqn:w rels}, we write $W^{(k)}_1(x) = W^{(k)}(x) \otimes 1$ and $W^{(k')}_2(y) = 1 \otimes W^{(k')}(y)$.} \\

\end{definition}

\begin{remark}

The left-hand side of \eqref{eqn:w rels} is a particular case of the reflection equation (\cite{S}). In fact, the $r=1$ case of the construction of Subsection \ref{sub:miura} (which already featured in \cite{RS2}) implies that there exists an algebra homomorphism:
$$
\CP_n^\infty \rightarrow \wuu
$$
As $\CP_n^\infty \cong \wCA^+$ is half of $\UU$, this extends to an ``evaluation homomorphism":
\begin{equation}
\label{eqn:eval}
\UU \rightarrow \wuu 
\end{equation}
which splits the embedding $\uu \hookrightarrow \UU$ of the middle factor of \eqref{eqn:triangular 2}. The morphism \eqref{eqn:eval} is equivalent to the $\br = (1,...,1)$ case of the action (1.5) of \cite{Par}. \\

\end{remark}

\noindent Formula \eqref{eqn:w rels} is not clear as stated, so we will rephrase it in the following steps: \\

\begin{enumerate} 
	
\item in Subsection \ref{sub:first} we explain what it ``does" \\

\item in Subsection \ref{sub:second} we explain how it is defined \\

\item in Subsection \ref{sub:third} we explain how it makes $\CP_n^\infty$ into an algebra \\

\end{enumerate} 

\begin{definition} 
\label{def:wr}

For any $r \in \BN$, define:
$$
\CP_n^r = \CP_n^\infty \Big / (W_{ij}^{(k)})_{(i,j) \in \zzz}^{k > r}
$$
Clearly, if $k' > r$, both sides of \eqref{eqn:w rels} lie in the two-sided ideal $(W_{ij}^{(k)})_{(i,j) \in \zzz}^{k > r}$. \\

\end{definition}

\subsection{} 
\label{sub:first}

In Section \ref{sec:shuf 1}, we will give a representation of the generating series \eqref{eqn:series} in terms of a so-called shuffle algebra. With respect to a certain basis $\{v_i\}$ of this shuffle algebra, we will show that there exist matrix-valued rational functions:
\begin{equation}
\label{eqn:g}
G_i^{(k,k')}(x,y) \in \End(V \otimes V)(x,y) 
\end{equation}
such that:
$$
R_{12} \left(\frac {xp^{2k}}{yp^{2k'}} \right) W^{(k)}_1(x) R_{21} \left(\frac {yp^{2k'}}x  \right) W^{(k')}_2(y) \prod_{i=k'-k+1}^{k'-1} f \left(\frac x{yp^{2i}} \right) = \sum_i G^{(k,k')}_i(x,y) v_i
$$
expanded for $|x| \gg |y|$, and:
$$
W^{(k')}_2(y) R_{12} \left(\frac {xp^{2k}}y \right) W^{(k)}_1(x) R_{21} \left(\frac yx \right) \prod_{i=1}^{k-1} f \left( \frac y{xp^{2i}} \right) = \sum_i G^{(k,k')}_i(x,y) v_i
$$
expanded for $|x| \ll |y|$. Therefore, the left-hand side of formula \eqref{eqn:w rels} consists of the difference between the two expansions of the rational functions \eqref{eqn:g} at $|x|\gg|y|$ and $|x|\ll|y|$. According to the well-known identity for rational functions:
$$
\Big( F(z) \text{ expanded for }|z| \ll 1 \Big) - \Big( F(z) \text{ expanded for }|z| \gg 1 \Big) =
$$
\begin{equation}
\label{eqn:identity}
= - \sum_{i=1}^d \delta \left(\frac z{\alpha_i} \right) \underset{z = \alpha_i}{\text{Res}} \frac {F(z)}z
\end{equation}
where $\alpha_1,...,\alpha_d$ are the poles of $F$ different from $0$ and $\infty$, the right-hand side of formula \eqref{eqn:w rels} simply claims that the only poles of the rational functions \eqref{eqn:g} are:
\begin{equation}
\label{eqn:poles}
y = x p^{2a} \quad \text{for} \quad a \in \{-k',...,k-k'-1\} \sqcup \{1,...,k\}
\end{equation}
and the residue at such a pole is given by:
$$ 
\underset{y=xp^{2a}}{\text{Res}} \frac {G_i^{(k,k')}(x,y)}y = (12) \cdot
$$
$$
\begin{cases} (q-q^{-1})  G_{i,21}^{(k-a,k'+a)}(xp^{2a},x) R(p^{-2a})^{-1} \prod_{i=1}^{a} f(p^{-2i}) &\text{if } a > 0 \\
(q^{-1}-q)  R \left(p^{2(k'+a-k)} \right)^{-1} G^{(k'+a,k-a)}_i(x,xp^{2a}) \prod_{i=1}^{k-k'-a} f(p^{-2i}) &\text{if } a < 0 \end{cases}
$$

\subsection{}
\label{sub:second}

The main take-away from the heuristic discussion in the previous Subsection is that the two terms in the left-hand side of \eqref{eqn:w rels}, namely:
\begin{align*}
&A^{(k,k')}(x,y) = R_{12} \left(\frac {xp^{2k}}{yp^{2k'}} \right) W^{(k)}_1(x) R_{21} \left(\frac {yp^{2k'}}x \right) W^{(k')}_2(y) \prod_{i=k'-k+1}^{k'-1} f \left(\frac x{yp^{2i}} \right) \\
&B^{(k,k')}(x,y) = W^{(k')}_2(y) R_{12} \left(\frac {xp^{2k}}y \right) W^{(k)}_1(x) R_{21} \left(\frac yx \right) \prod_{i=1}^{k-1} f \left( \frac y{xp^{2i}} \right)
\end{align*}
should be thought of as the expansions (as $|x|\gg|y|$ and $|x| \ll|y|$, respectively) of one and the same matrix-valued rational function. To convert this into a precise abstract presentation of formulas \eqref{eqn:w rels}, we will use the language of \cite{AKOS} (although our presentation will be closer to the style of \cite{O}). For any scalar $\alpha$, we define:
\begin{equation}
\label{eqn:gamma}
\Gamma^{(k,k')}(x,x\alpha) = 
\end{equation}
$$
= \left( \frac {A^{(k,k')}(x,y)}{x\alpha - y} \text{ expanded for } |x| \gg |y| \right) - \left( \frac {B^{(k,k')}(x,y)}{x\alpha - y} \text{ expanded for } |x| \ll |y| \right)
$$
With this definition, \eqref{eqn:gamma} is an element of $\End(V \otimes V)[[x^{\pm 1}, y^{\pm 1}]]$ whose coefficients are infinite sums of products of the $W_{ij}^{(k)}$'s, but well-defined as an element of \eqref{eqn:def p}. If all power series involved were the expansion of a rational function $G$ as in the previous Subsection, then \eqref{eqn:gamma} would equal $G(x,x\alpha)$ plus contributions from the residues of $G$ at the poles $\notin \{0, \infty\}$. Then for any $k \leq k'$, we postulate that:
$$
A^{(k,k')}(x,x\alpha)  \quad \text{ and } \quad B^{(k,k')}(x,x\alpha)
$$
are given by the expression:
$$
\Gamma^{(k,k')}(x,x\alpha) + (q^{-1}-q) (12) \sum_{a \in \{1,...,k\}}  \frac {A_{21}^{(k-a,k'+a)}(xp^{2a},x) R(p^{-2a})^{-1} \prod_{i=1}^{a} f(p^{-2i})}{x(\alpha - p^{2a})} +
$$
$$
+ (q-q^{-1}) (12) \sum_{a \in \{-k',...,k-k'-1\}} \frac {R \left(p^{2(k'+a-k)} \right)^{-1} B^{(k'+a,k-a)}_{12}(x,xp^{2a}) \prod_{i=1}^{k-k'-a} f(p^{-2i})}{x(\alpha - p^{2a})}
$$
for any $\alpha \neq p^{2a}$ with $a$ as in \eqref{eqn:poles}. For any such $\alpha$, this allows one to successively write every $A^{(k,k')}(x,x\alpha)$ and $B^{(k,k')}(x,x\alpha)$ as a product of various $\Gamma^{(k,k')}(x,x\alpha)$, the matrix-valued rational functions $R$, and the scalar-valued rational functions $f$. If one replaces the right-hand side of \eqref{eqn:w rels} by the resulting expressions in $\Gamma$, $R$ and $f$, then formulas \eqref{eqn:w rels} are well-defined expressions in terms of the symbols \eqref{eqn:series}. \\

\subsection{} 
\label{sub:third}

In the preceding Subsection, we gave a precise recipe for how to make sense of \eqref{eqn:w rels} as an element of $\End(V \otimes V)[[x^{\pm 1}, y^{\pm 1}]]$ whose coefficients are infinite sums of products of the $W_{ij}^{(k)}$'s. Explicitly, we can take the coefficient of:
$$
E_{ij} \otimes E_{i'j'} = E_{\bari \barj} x^{\left \lfloor \frac {i-1}n \right \rfloor - \left \lfloor \frac {j-1}n \right \rfloor} \otimes E_{\bari' \barj'} y^{\left \lfloor \frac {i'-1}n \right \rfloor - \left \lfloor \frac {j'-1}n \right \rfloor}
$$
for all $(i,j), (i',j') \in \zzz$, and we obtain the relation:
$$
W_{ij}^{(k)} W_{i'j'}^{(k')} q^{\delta_{\bari'}^{\barj} - \delta_{\bari'}^{\bari}} - W_{i'j'}^{(k')}  W_{ij}^{(k)} q^{\delta_{\barj'}^{\barj} - \delta_{\barj'}^{\bari}} + 
$$
$$
+ \sum_{s-t+s'-t' = i - j + i' - j'}^{i-j < s-t, \ i'-j' > s'-t'} \gamma \cdot W_{st}^{(k)} W_{s't'}^{(k')} + \sum_{s-t+s'-t'  = i - j + i' - j'}^{i-j>s-t, \ i'-j' < s'-t'} \gamma \cdot W_{s't'}^{(k')} W_{st}^{(k)} =
$$ 
$$
= \sum_{a=1}^k \left[ \sum_{s-t+s'-t'  = i - j + i' - j'}^{s-t\text{ bounded below}} \gamma W_{st}^{(k-a)} W_{s't'}^{(k'+a)} + \sum_{s-t+s'-t'  = i - j + i' - j'}^{s-t\text{ bounded above}} \gamma W_{s't'}^{(k'+a)} W_{st}^{(k-a)} \right]
$$
for various coefficients $\gamma \in \BQ(p,q)$ that can be computed recursively. So if $\frac {i-j}{k} < \frac {i'-j'}{k'}$ regardless of whether $k \leq k'$ or $k \geq k'$, iterating the formula above yields:
$$
W_{ij}^{(k)} W_{i'j'}^{(k')} = \sum_{s-t+s'-t' = i - j + i' - j'}^{\frac {s-t}{k} \geq \frac {s'-t'}{k'}} \gamma \cdot W_{st}^{(k)} W_{s't'}^{(k')} + \sum_{s-t+s'-t' = i - j + i' - j'}^{\frac {s-t}{k} \leq \frac {s'-t'}{k'}} \gamma \cdot W_{s't'}^{(k')} W_{st}^{(k)} +
$$
$$
\sum_{a=1}^{\min(k,k')}  \left[ \sum_{s-t+s'-t'  = i - j + i' - j'}^{s-t\text{ bounded below}} \gamma W_{st}^{(k-a)} W_{s't'}^{(k'+a)} + \sum_{s-t+s'-t'  = i - j + i' - j'}^{s-t\text{ bounded above}} \gamma W_{s't'}^{(k'+a)} W_{st}^{(k-a)} \right]
$$
As in the proof of Theorem 3.13 of \cite{W surf}, the formula above can be iterated to write:
$$
W_{ij}^{(k)} W_{i'j'}^{(k')} = \sum_{l+l' = k+k'}^{\min(l,l') \leq \min(k,k')} \sum_{s-t+s'-t' = i - j + i' - j'}^{\frac {s-t}{l} \geq \frac {s'-t'}{l'}} \gamma \cdot W_{st}^{(l)} W_{s't'}^{(l')}
$$
for various coefficients $\gamma \in \BQ(p,q)$. Applying this formula repeatedly shows that the product of any two infinite sums of the form \eqref{eqn:def p} can be naturally transformed into an infinite sum of the same form, thus making $\CP_n^\infty$ into an associative algebra. \\

\begin{remark}

A more precise way to rephrase the latter statement is to define $\CP_n^\infty$ by taking the free associative algebra obtained by concatenating infinite sums of the form \eqref{eqn:def p}, and quotienting it by the two-sided ideal generated by relations \eqref{eqn:w rels}. Then the discussion in the present Subsection shows that any such concatenation (a priori an infinite sum which is not of the form \eqref{eqn:def p}) can be transformed using relations \eqref{eqn:w rels} into an infinite sum which is of the form \eqref{eqn:def p}. Thus the aforementioned quotient construction respects our notion of infinite sums. \\

\end{remark}

\subsection{} 
\label{sub:quantum group}

Recall the $R$--matrix of \eqref{eqn:def r}, and let us define the quantum affine group. \\

\begin{definition}
\label{eqn:quantum group}
	
Consider the algebra:
\begin{equation}
\label{eqn:two realizations 2}
\uu := \BQ(q) \Big \langle s_{[i;j)}, t_{[i;j)}, c \Big \rangle^{1 \leq i \leq n}_{i \leq j \in \BZ} \Big/ \text{relations \eqref{eqn:rtt 0}--\eqref{eqn:rtt 3}}
\end{equation}
where:
\begin{equation}
\label{eqn:rtt 0}
c \text{ is central, and } s_{[i;i)} t_{[i;i)} = 1
\end{equation}
\begin{align}
R\left( \frac xy \right) S_1(x) S_2(y) &=  S_2(y)   S_1(x) R\left( \frac xy \right) \label{eqn:rtt 1} \\
R\left( \frac xy \right) T_1(x) T_2(y) &=  T_2(y)   T_1(x) R\left( \frac xy \right) \label{eqn:rtt 2} \\
R\left( \frac {xc}{y} \right) S_1(x) T_2(y) &= T_2(y) S_1(x) R\left( \frac {x}{yc} \right) \label{eqn:rtt 3} 
\end{align}
where:
\begin{align}
&S(x) = \sum_{1 \leq i \leq n}^{i\leq j \in \BZ} s_{[i;j)} \cdot E_{\bari \barj} x^{\left \lfloor \frac {i-1}n \right \rfloor - \left \lfloor \frac {j-1}n \right \rfloor} \quad \in \quad \uu \otimes \eEnd(V)[[x^{-1}]] \label{eqn:series s} \\
&T(x) = \sum_{1 \leq i \leq n}^{i\leq j \in \BZ} t_{[i;j)} \cdot E_{\barj \bari} x^{\left \lfloor \frac {j-1}n \right \rfloor - \left \lfloor \frac {i-1}n \right \rfloor} \quad \in \quad \uu \otimes \eEnd(V)[[x]] \label{eqn:series t}
\end{align}
	
\end{definition}

\noindent There is a bialgebra structure on $\uu$, given by the coproduct:
\begin{align}
&\Delta(S(x)) = (1 \otimes S(x c_1)) \cdot (S(x) \otimes 1) \label{eqn:cop rtt 1 initial} \\ 
&\Delta(T(x)) = (1 \otimes T(x)) \cdot (T(xc_2) \otimes 1) \label{eqn:cop rtt 2 initial} 
\end{align}
(where $c_1 = c \otimes 1$ and $c_2 = 1 \otimes c$). This coproduct preserves the subalgebras:
\begin{equation}
\label{eqn:subalgebras}
\uug = \BQ (q) \langle s_{[i;j)} \rangle_{i\leq j} \qquad \text{and} \qquad \uul = \BQ (q) \langle t_{[i;j)} \rangle_{i\leq j}
\end{equation}
of $\uu$. Moreover, all aforementioned algebras are $\zz$--graded, via:
$$
\deg s_{[i;j)} = - \deg t_{[i;j)} = [i;j)
$$
where we write $\bs^i = (0,...,\underbrace{1}_{\text{position }\bari},...,0)$ for any integer $i$, and:
$$
[i;j) = \begin{cases} \bs^i+...+\bs^{j-1} &\text{if } i \leq j \\-\bs^j - ... - \bs^{i-1} &\text{if } i>j \end{cases}
$$
The algebra $\uu$ has a basis given by the generators $s_{[i;j)}$ and $t_{[i;j)}$ taken in non-increasing order of their total degree, which is defined as $|\deg| \in \BZ$. We will write: 
\begin{equation}
\label{eqn:psi}
\psi_k = s_{[k;k)}^{-1} = t_{[k;k)}
\end{equation}
for all $1 \leq k \leq n$, and extend this notation to all integers $k$ by setting $\psi_{k+n} = c \psi_k$. \\

\subsection{}
\label{sub:new}

We now consider the $c = p^{-1}$ specialization of the quantum group, and replace the generating series $S(x)$, $T(x)$ satisfying \eqref{eqn:rtt 1}--\eqref{eqn:rtt 3} by generating series:
\begin{align}
&\Lambda^+(x) = \sum_{1 \leq i \leq n}^{i\geq j \in \BZ} v^+_{ij} \cdot E_{\bari \barj} x^{\left \lfloor \frac {i-1}n \right \rfloor - \left \lfloor \frac {j-1}n \right \rfloor} \quad \in \quad \frac {\uug}{c-p^{-1}} \otimes \End(V) [[x]] \label{eqn:a plus} \\
&\Lambda^-(x) = \sum_{1 \leq i \leq n}^{i\leq j \in \BZ} v^-_{ij} \cdot E_{\bari \barj} x^{\left \lfloor \frac {i-1}n \right \rfloor - \left \lfloor \frac {j-1}n \right \rfloor} \quad \in \quad \frac {\uul}{c-p^{-1}} \otimes \End(V) [[x^{-1}]] \label{eqn:a minus}
\end{align}
which satisfy the following relations:
\begin{align}
R\left( \frac xy \right) \Lambda^+_1(x) \Lambda^+_2(y) &=  \Lambda^+_2(y) \Lambda^+_1(x) R\left( \frac xy \right) \label{eqn:rtt 1 new} \\
\Lambda^-_1(x) \Lambda^-_2(y) R\left( \frac xy \right) &= R\left( \frac xy \right) \Lambda^-_2(y) \Lambda^-_1(x)  \label{eqn:rtt 2 new} 
\end{align} 
and:
\begin{equation}
\label{eqn:rtt 3 new} 
\Lambda_1^-(x) R_{21}\left( \frac {yp^2}{x} \right) \Lambda_2^+(y) = \Lambda_2^+(y) R_{12} \left( \frac xy \right)^{-1} \Lambda_1^-(x) 
\end{equation}
\footnote{To make relations \eqref{eqn:rtt 1}--\eqref{eqn:rtt 3} give rise to \eqref{eqn:rtt 1 new}--\eqref{eqn:rtt 3 new}, we set $\Lambda^+(x) = S\left( \frac {p^2}x \right)^\dagger$, while: 
$$
\Lambda^-(x) = D \cdot T \left( \frac {p^2}{xq^{2n}} \right)^{-1,\dagger} \cdot D^{-1} \cdot \exp \left( \sum_{k=1}^\infty \frac {p_{-k}x^k}k \cdot \text{Id} \right)
$$ 
where $p_{-1},p_{-2},...$ are primitve elements of $\uum$ (see \cite{Tor} for a review), while $\dagger$ denotes matrix transpose and $D = \text{diag}(q^2,...,q^{2n})$. This is not the only choice, but is the one featuring in \cite{Par}.} We consider the following completion of the algebra $\frac {\uu}{c-p^{-1}}$:
$$
U^{(1)} = \bigoplus_{d \in \BZ} \left ( \mathop{\prod^{\sum_{s=1}^{k+l} (i_s-j_s) = d}_{i_1-j_1 \geq ... \geq i_k-j_k \geq 0}}_{0 \geq i_{k+1}-j_{k+1} \geq ... \geq i_{k+l}-j_{k+l}} \BQ(p,q) \cdot v^+_{i_1j_1}... v^+_{i_kj_k} v^-_{i_{k+1}j_{k+1}}... v^-_{i_{k+l}j_{k+l}} \right )
$$
The following is a well-defined element of $U^{(1)} \otimes \End(V) [[x^{\pm 1}]]$:
\begin{equation}
\label{eqn:a}
\Lambda(x) = \Lambda^+(x) \Lambda^-(x) =: \sum_{1 \leq i \leq n}^{j \in \BZ} v_{ij} \cdot E_{\bari \barj} x^{\left \lfloor \frac {i-1}n \right \rfloor - \left \lfloor \frac {j-1}n \right \rfloor}
\end{equation}
If we call the number $i-j$ the total degree of the elements $v_{ij}^+$, $v_{ij}^-$, $v_{ij}$, then a product of such elements taken in non-increasing order of the total degree will be called normally ordered. It is a straightforward consequence of \eqref{eqn:rtt 1 new}--\eqref{eqn:rtt 3 new} that any product of normally ordered expressions can be normally ordered. Moreover:
$$
v_{i_1j_1}... v_{i_{k+l}j_{k+l}} = v^+_{i_1j_1}... v^+_{i_kj_k} v^-_{i_{k+1}j_{k+1}}... v^-_{i_{k+l}j_{k+l}} + \text{``smaller terms"}
$$ 
for any sequence $\sigma =  (i_1-j_1 \geq ... \geq i_k-j_k \geq 0 \geq i_{k+1}-j_{k+1} \geq ... \geq i_{k+l}-j_{k+l})$, where ``smaller terms" refers to normally ordered products whose sequence of total degrees is lexicographically greater than $\sigma$ (see the Appendix of \cite{Tor} for details). \\

\subsection{} 

For any $r \in \BN$, consider $r$ copies of the symbols defined above, denoted by:
\begin{equation}
\label{eqn:symbols}
v^{(a)}_{ij} \text{ and } v^{(a)\pm}_{ij} \qquad \forall a \in \{1,...,r\}
\end{equation}
and we impose upon them the same algebra relations as follow from \eqref{eqn:rtt 1 new}--\eqref{eqn:rtt 3 new}. \\

\begin{definition}
	
For symbols \eqref{eqn:symbols}, we consider the vector space:
$$
U^{(r)} = \bigoplus_{d \in \BZ} \left ( \prod^{\sum_{a=1}^r \sum_{s=1}^{k_a} (i^{(a)}_s-j^{(a)}_s) = d}_{i_1^{(a)}-j^{(a)}_1 \geq ... \geq i^{(a)}_{k_a}-j^{(a)}_{k_a}} \BQ(p,q) \cdot v^{(1)}_{i^{(1)}_1j^{(1)}_1}... v^{(1)}_{i^{(1)}_{k_1} j^{(1)}_{k_1}} ... v^{(r)}_{i^{(r)}_1j^{(r)}_1}... v^{(r)}_{i_{k_r}^{(r)} j^{(r)}_{k_r}}\right )
$$	
We make $U^{(r)}$ into an algebra by imposing relations \eqref{eqn:rtt 1 new}--\eqref{eqn:rtt 3 new} for the series $\Lambda^{(a)\pm}(x)$ and $\Lambda^{(a)}(x)$ for all $a \in \{1,...,r\}$ individually, as well as the relations:
\begin{equation}
\label{eqn:rtt final}
\Lambda^{(b)}_1(x) R_{21} \left(\frac {yp^2}x \right) \Lambda_2^{(a)}(y) = 
\end{equation}
$$
= R_{21} \left( \frac yx \right) \Lambda_2^{(a)}(y) R_{12} \left( \frac {xp^2}y \right) \Lambda_1^{(b)}(x) R_{12} \left( \frac xy \right)^{-1}
$$
in $U^{(r)} \in \eEnd(V \otimes V)[[x^{\pm 1}, y^{\pm 1}]]$ (expanded as $|x| \gg |y|$), for all $1 \leq a < b \leq r$. \\	
	
\end{definition}

\noindent A product of $v^{(a)}_{ij}$ in non-decreasing order of $a$, and in non-increasing order of $i-j$ to break ties, will be called normally ordered. When one needs to multiply two normally ordered products, one first uses \eqref{eqn:rtt final} to express:
$$
v^{(b)}_{st} v^{(a)}_{ij} = \sum^{i'-j' \leq i-j}_{s'-t' \geq s-t} \gamma \cdot v^{(a)}_{i'j'} v^{(b)}_{s't'}
$$
for all $i,j,s,t$ and certain coefficients $\gamma \in \BQ(p,q)$ that can be explicitly computed. Then, once all the $v^{(a)}_{ij}$'s are in non-decreasing order of $a$, one uses \eqref{eqn:rtt 1 new}--\eqref{eqn:rtt 3 new} to also place them in non-increasing order of $i-j$. This makes $U^{(r)}$ into an algebra, and we leave the details of this procedure as an exercise to the interested reader. \\

\subsection{}
\label{sub:miura}

Let $D$ be the difference operator $f(x) \leadsto f(xp^2)$. \\

\begin{definition}
\label{def:miura} 

Consider the series $\bW^{(1)}(x),...,\bW^{(r)}(x) \in U^{(r)} \otimes \eEnd(V) [[x^{\pm 1}]]$:
\begin{equation}
\label{eqn:miura}
\sum_{k=0}^{r} (-D)^{r-k} \cdot \bW^{(k)}(x) = \prod_{i=1}^r \Big(\Lambda^{(i)} (xp^{2(r-i)}) - D\Big)
\end{equation}
i.e.:
\begin{equation}
\label{eqn:bar w}
\bW^{(k)}(x) = \sum_{1 \leq i_1 < ... < i_k \leq r} \Lambda^{(i_1)}(xp^{2(k-1)}) ... \Lambda^{(i_{k-1})}(xp^2) \Lambda^{(i_k)}(x)
\end{equation}
and $\bW^{(k)}(x) = 0$ for $k > r$. \\

\end{definition}

\begin{proposition}
\label{prop:miura} 

The subalgebra of $U^{(r)}$ generated by the currents $\bW^{(k)}(x)$ deforms the universal enveloping algebra of the $W$--algebra of $\fgl_{nr}$ associated with nilpotent of Jordan type $r+...+r$. \\

\end{proposition}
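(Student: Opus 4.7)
The plan is to exhibit a semiclassical limit $p \to 1$ under which the Miura formula \eqref{eqn:miura} degenerates to the classical parabolic Miura transformation presenting $U(W(\fgl_{nr}, e_{r+\cdots+r}))$, and then to establish flatness of the deformation by a graded-dimension count.

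First, I would set $p = e^{\hbar}$ and pass to leading order in $\hbar$. Since $R(x/y) = \Id + O(\hbar)$, the RTT-style relations \eqref{eqn:rtt 1 new}--\eqref{eqn:rtt 3 new} degenerate to Poisson brackets realizing each $\Lambda^{(a)}(x)$ as an $n \times n$ matrix current of $\widehat{\fgl}_n$ at a specific level (the $p^2$-shift inside \eqref{eqn:rtt 3 new} supplying the level parameter). The cross-relation \eqref{eqn:rtt final} becomes, at $p = 1$, mutual Poisson commutation of $\Lambda^{(a)}$ and $\Lambda^{(b)}$ for $a \neq b$, modulo the bracket contributions already encoded in \eqref{eqn:rtt 1 new}--\eqref{eqn:rtt 2 new}. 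Thus the classical limit of $U^{(r)}$ is $r$ Poisson-commuting copies of $U(\widehat{\fgl}_n)$. Under the same limit, the difference operator $D$ becomes $\Id + 2\hbar x \partial_x + O(\hbar^2)$, so after rescaling the derivation, \eqref{eqn:miura} becomes
\begin{equation*}
\sum_{k=0}^r (-\partial)^{r-k} W^{(k)}_{\textrm{cl}}(x) \ = \ \prod_{i=1}^r \Big(J^{(i)}(x) - \partial\Big),
\end{equation*}
with $J^{(i)}(x)$ an $n \times n$ matrix current of $\widehat{\fgl}_n$. This is the classical parabolic Miura transformation for the parabolic subalgebra of $\fgl_{nr}$ with Levi $\fgl_n^{\times r}$, whose image (by the Feigin--Frenkel Drinfeld--Sokolov reduction; compare the affine setting of \cite{U} and the finite analogue of \cite{BK}) is precisely $U(W(\fgl_{nr}, e_{r+\cdots+r}))$.

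Next, I would derive explicit quadratic relations for $\bW^{(k)}(x)$ via a direct $R$-matrix calculation: insert \eqref{eqn:bar w} into the product $\bW^{(k)}(x) \bW^{(k')}(y)$, use the cross-relation \eqref{eqn:rtt final} to move each $\Lambda^{(a)}(x)$ past each $\Lambda^{(b)}(y)$, and match the resulting commutation structure against the relations \eqref{eqn:w rels}. This yields a surjection $\CP_n^r \twoheadrightarrow \langle \bW^{(k)}(x) \rangle$. Combining with the classical-limit surjection $\mathrm{gr}\, \langle \bW^{(k)}(x) \rangle \twoheadrightarrow U(W(\fgl_{nr}, e_{r+\cdots+r}))$ and comparing graded dimensions---those of $\CP_n^r$ being controlled by Theorem~\ref{thm:main 1} and coinciding with the PBW dimension of the classical $W$-algebra---both surjections must be isomorphisms, proving the proposition.

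The hardest step will be the explicit $R$-matrix verification that the Miura currents $\bW^{(k)}(x)$ satisfy \emph{exactly} the relations \eqref{eqn:w rels}: one must check that all higher-order poles in $x/y$ cancel between terms of the Miura product, leaving only simple poles at $y=xp^{2a}$ for $a \in \{-k',\ldots,k-k'-1\} \sqcup \{1,\ldots,k\}$ with residues matching those of Subsection~\ref{sub:first}. For $n=1$ this is essentially the main technical calculation of \cite{AKOS}; the general-$n$ case proceeds by the same recursive scheme, but the extra matrix indices required to track the non-commutativity within each copy of the quantum current algebra make the bookkeeping substantially heavier.
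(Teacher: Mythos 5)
Your second paragraph is essentially the paper's proof of Proposition~\ref{prop:miura}: take the semiclassical limit, observe that each $\Lambda^{(i)}(x)$ degenerates to a $\fgl_n$ loop current, that distinct $\Lambda^{(i)}$, $\Lambda^{(j)}$ commute at leading order (the paper's Claim~\ref{claim:commute}, which it proves by a non-trivial cancellation in \eqref{eqn:rtt final}), and that $D$ degenerates to a translation so that \eqref{eqn:miura} becomes the rectangular Miura transformation. Two imprecisions, however: (a) you only set $p=e^{\hbar}$, but to get $R(x/y)=\text{Id}+O(\hbar)$ you must simultaneously send $q \to 1$; the paper sets $q=e^{\e}$, $p=e^{\e\beta}$, so both scale together. (b) The paper's limit is not a Poisson limit: the $\bar\Lambda^{(i)}$'s satisfy the commutation relations of a loop Lie algebra, so the target is a universal enveloping algebra, not a Poisson algebra. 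Your repeated use of ``Poisson'' should be ``Lie bracket.''

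Your third and fourth paragraphs are not part of the proof of Proposition~\ref{prop:miura} at all; they are the content of the subsequent proof of Theorem~\ref{thm:main 3}. In the paper, Proposition~\ref{prop:miura} is proved \emph{before} one knows that $\bW^{(k)}(x)$ satisfies \eqref{eqn:w rels}, by the classical-limit computation alone. The verification that $\bW^{(k)}(x)$ satisfies the $W$-relations (Claims~\ref{claim:precise 1} and \ref{claim:precise 2}) is then done separately, via the pole-and-residue induction you sketch, to establish the homomorphism $\CP_n^r \to U^{(r)}$ needed for Theorem~\ref{thm:main 3}. Your flatness / graded-dimension argument is a reasonable idea for strengthening ``deforms'' to a precise statement, but the paper does not carry it out and your sketch leaves it unproved that the graded dimension of $\CP_n^r$ (which is not simply read off from Theorem~\ref{thm:main 1}, as that theorem concerns $\CP_n^\infty$, not the quotient) coincides with the PBW dimension of the classical $W$-algebra.
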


\begin{proof} We will compare \eqref{eqn:miura} with the quantum Miura transformation for $W$--algebras in type $A$ associated with rectangular nilpotent (\cite{AM, EP}). To this end, let:
$$
q = e^{\e}, \quad p = e^{\e\beta} \quad \text{and} \quad \Lambda^{(i)}(x) = 1 + \e \bar{\Lambda}^{(i)}(x) + O(\e^2)
$$
and take the leading order term in $\e$ in all of our formulas. Since:
$$
R(z) = 1 + \e \bR(z) + O(\e^2), \quad \text{where} \quad \bR(z) = \sum_{1 \leq i , j \leq n} E_{ij} \otimes E_{ji} \cdot \frac {z^{\delta_{i \leq j}} + z^{\delta_{i<j}}}{1-z} 
$$
It is elementary to see that relations \eqref{eqn:rtt 1 new}--\eqref{eqn:rtt 3 new} mean that the coefficients of each series $\bar{\Lambda}^{(i)}(x)$ satisfy the commutation relations of $\fgl_n[t^{\pm 1}]$ (in fact, this is simply restating the well-known fact that the algebra \eqref{eqn:two realizations 2} deforms $\fgl_n[t^{\pm 1}]$). \\

\begin{claim}
\label{claim:commute}
	
The coefficients of series $\bar{\Lambda}^{(i)}(x)$ and $\bar{\Lambda}^{(j)}(x)$ commute for all $i \neq j$. \\

\end{claim} 

\noindent We will prove the claim after we complete the proof of the Proposition. As:
$$
D = e^{2\e \beta x \partial_x}
$$
the leading order term in $\e$ of the right-hand side of \eqref{eqn:miura} is precisely the quantum Miura transformation for $W$--algebras in type $A$ associated with rectangular nilpotent. The coefficients of the latter expression are the generators of the universal enveloping algebra of the $W$--algebra of $\fgl_{nr}$ associated with rectangular nilpotent, which concludes the proof of Proposition \ref{prop:miura}.

\begin{proof} \emph{of Claim \ref{claim:commute}:} Relation \eqref{eqn:rtt final} can be rewritten as:
$$
\Big[1 + \e \bar{\Lambda}^{(j)}_1(x) \Big] \left[1 + \e \bR_{21} \left(\frac {yp^2}x \right) \right] \Big[1 + \e \bar{\Lambda}_2^{(i)}(y) \Big] \left[1 + \e \bR_{12} \left( \frac xy \right) \right] = 
$$
$$
= \left[1 + \e \bR_{21} \left( \frac yx \right) \right] \Big[1 + \e \bar{\Lambda}_2^{(i)}(y) \Big] \left[1 + \e \bR_{12} \left( \frac {xp^2}y \right) \right] \Big[1 + \e \bar{\Lambda}^{(j)}_1(x) \Big]
$$
plus $O(\e^2)$. We note that the $O(\e^2)$ terms from each bracket do not contribute anything to the above relation, because their contributions to the left and the right-hand sides cancel. Therefore, we may equate the $\e^2$ terms in the left and right-hand sides of the above relation, and obtain:
$$
\bar{\Lambda}^{(j)}_1(x) \bar{\Lambda}_2^{(i)}(y) - \bar{\Lambda}_2^{(i)}(y) \bar{\Lambda}^{(j)}_1(x) + \squiggly{\bR_{21} \left(\frac {yp^2}x \right) \bR_{12} \left( \frac xy \right) - \bR_{21} \left( \frac yx \right)\bR_{12} \left( \frac {xp^2}y \right)} +
$$
$$
+ \squiggly{\bar{\Lambda}^{(j)}_1(x) \bR_{21} \left(\frac {yp^2}x \right) + \bar{\Lambda}^{(j)}_1(x) \bR_{12} \left( \frac xy \right) - \bR_{21} \left( \frac yx \right) \bar{\Lambda}^{(j)}_1(x) - \bR_{12} \left( \frac {xp^2}y \right) \bar{\Lambda}^{(j)}_1(x)} +
$$
$$
+ \squiggly{\bR_{21} \left(\frac {yp^2}x \right) \bar{\Lambda}_2^{(i)}(y) + \bar{\Lambda}_2^{(i)}(y)\bR_{12} \left( \frac xy \right) - \bR_{21} \left( \frac yx \right)\bar{\Lambda}_2^{(i)}(y) - \bar{\Lambda}_2^{(i)}(y)\bR_{12} \left( \frac {xp^2}y \right)} = 
$$
\begin{equation}
\label{eqn:four lines}
= \frac 1{\e} \left[ \bR_{12} \left( \frac {xp^2}y \right) - \bR_{12} \left( \frac {x}y \right) - \bR_{21} \left(\frac {yp^2}x \right) + \bR_{21} \left(\frac {y}x \right) \right]
\end{equation}
up to order $\e$. The terms with the squiggly underline on the first and third rows vanish (up to order $\e$) for trivial reasons, while the term with the squiggly underline on the second row vanishes (up to order $\e$) due to the elementary identity:
$$
\bR_{21} \left( \frac 1z \right) = - \bR_{12}(z)
$$
Meanwhile, the right-hand side of \eqref{eqn:four lines} vanishes due to the identity:
$$
\bR_{12}(zp^2) - \bR_{12}(z) = - 4 \e \beta (12) \cdot \frac z{(1-z)^2} + O(\e^2)
$$
so we are left with $[\bar{\Lambda}^{(j)}_1(x), \bar{\Lambda}_2^{(i)}(y)] = 0$, as we needed to prove. 
\end{proof} 

\end{proof} 

\begin{proof} \emph{of Theorem \ref{thm:main 3}:} We must prove that relations \eqref{eqn:w rels} hold $W^{(k)}(x) \leadsto \bW^{(k)}(x)$ and to this end, we will use the approach used by \cite{O} in the case $n=1$. To be more precise, we will prove the following: \\
	
\begin{claim}
\label{claim:precise 1}

For any $k \leq k'$, the expansions:
\begin{equation}
\label{eqn:un}
R_{12} \left(\frac {xp^{2k}}{yp^{2k'}} \right) \bW^{(k)}_1(x) R_{21} \left(\frac {yp^{2k'}}x \right) \bW^{(k')}_2(y) \prod_{i=k'-k+1}^{k'-1} f \left(\frac x{y p^{2i}} \right) 
\end{equation}
(as $|x| \gg |y|$) and:
\begin{equation}
\label{eqn:doi}
\bW^{(k')}_2(y) R_{12} \left(\frac {xp^{2k}}y \right) \bW^{(k)}_1(x) R_{21} \left(\frac yx \right) \prod_{i=1}^{k-1} f \left( \frac y{x p^{2i}} \right) 
\end{equation}
(as $|x| \ll |y|$) are both represented by the same element of $U^{(r)} \otimes \emph{End}(V \otimes V) (x,y)$. \\

\end{claim}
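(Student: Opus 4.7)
The plan is to reduce Claim \ref{claim:precise 1}, via the explicit Miura formula \eqref{eqn:bar w}, to a statement about commutation relations among the elementary factors $\Lambda^{(i)}(x)$. I will expand $\bW^{(k)}_1(x) = \sum_{|S|=k} \Lambda^S_1(x)$ and $\bW^{(k')}_2(y) = \sum_{|T|=k'} \Lambda^T_2(y)$, using the shorthand $\Lambda^S(x) = \Lambda^{(i_1)}(xp^{2(k-1)}) \cdots \Lambda^{(i_k)}(x)$ for $S = \{i_1 < \cdots < i_k\}$. Both expressions \eqref{eqn:un} and \eqref{eqn:doi} then split into finite sums over pairs of subsets $(S, T)$, and the task reduces to showing that for each such pair, the corresponding summands in \eqref{eqn:un} and \eqref{eqn:doi} are expansions of the same element of $U^{(r)} \otimes \End(V \otimes V)(x,y)$ in the regimes $|x|\gg|y|$ and $|x|\ll|y|$ respectively.

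\textbf{Key steps.} Fixing $(S, T)$, I will progressively move each elementary factor $\Lambda^{(j)}_2$ inside $\Lambda^T_2(y)$ leftward past each factor $\Lambda^{(i)}_1$ inside $\Lambda^S_1(x)$. When $i \neq j$, the required exchange is governed by relation \eqref{eqn:rtt final}; when $i = j$, it follows from combining \eqref{eqn:rtt 1 new}--\eqref{eqn:rtt 3 new} applied to the factorization $\Lambda^{(i)} = \Lambda^{(i)+}\Lambda^{(i)-}$, which produces a reflection-type exchange relation. Each individual commutation introduces R-matrix prefactors whose spectral parameters are explicit products of powers of $p$ determined by the shifts $xp^{2(k-a)}$ and $yp^{2(k'-b)}$. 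The crucial point is that when all these commutations are organized in a consistent order, the accumulated R-matrix prefactors telescope with the outer R-matrices $R_{12}(xp^{2k}/(yp^{2k'}))$ and $R_{21}(yp^{2k'}/x)$ in \eqref{eqn:un} (respectively $R_{12}(xp^{2k}/y)$ and $R_{21}(y/x)$ in \eqref{eqn:doi}), while the residual scalars assemble into the products $\prod f(\cdots)$ appearing in the two expressions.

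\textbf{Main obstacle and induction.} The principal difficulty is the bookkeeping of the $p^{2\bullet}$ shifts: the $a$-th factor inside $\Lambda^S_1(x)$ sits at argument $xp^{2(k-a)}$, so the R-matrix produced at each commutation step carries an intricate spectral parameter that must ultimately collapse to match the outer R-matrix arguments of \eqref{eqn:un} and \eqref{eqn:doi}. I would structure the argument as an induction on $|S| + |T|$. The base case $|S|=|T|=1$ reduces to verifying that $\Lambda^{(i)} = \Lambda^{(i)+}\Lambda^{(i)-}$ satisfies the relevant reflection equation, which is a direct computation from \eqref{eqn:rtt 1 new}--\eqref{eqn:rtt 3 new} (this is essentially the $r=1$ case appearing in \cite{AKOS, RS2}, though the $R$-matrix shifts must be adapted to general $n$). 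The inductive step peels off the outermost factor of $\Lambda^T_2(y)$ by applying either \eqref{eqn:rtt final} or the base reflection relation, and then invokes the inductive hypothesis on the remaining product, with care taken to show that all R-matrix conjugations and scalar $f$-factors regroup into the form required by \eqref{eqn:un} or \eqref{eqn:doi}. A parallel bookkeeping argument, analogous to the $n=1$ case treated in \cite{O}, confirms that both expansions converge in their respective domains to a common rational function in $x/y$.
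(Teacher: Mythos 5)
Your approach diverges substantially from the paper's after the base case. The paper proves Claim \ref{claim:precise 1} jointly with Claim \ref{claim:precise 2} by induction on $k$, and the key technical device in the inductive step is the iterated-residue identity \eqref{eqn:iterate}, which expresses $\bW^{(k)}_1(x)$ as the residue at $z = xp^2$ of $\frac{(1\bullet)}{q-q^{-1}} \bW^{(k-1)}_\bullet(z) R_{1\bullet}(xp^2/z) \bW^{(1)}_1(x)$. This lets the paper rewrite \eqref{eqn:un} and \eqref{eqn:doi} in terms of the already-established cases $(1,k')$ and $(k-1,k')$, and then the two sides are seen to represent the same rational function because the residue of the $R$-matrix factor is a multiple of a permutation that can be slid through. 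Your proposal instead attempts a direct normal-ordering of $\Lambda^S_1(x)$ against $\Lambda^T_2(y)$ for each fixed pair of subsets $(S,T)$, which is exactly what the paper does for $k=1$ (formulas \eqref{eqn:for 1}--\eqref{eqn:for 3}), but the paper does \emph{not} pursue this for higher $k$, precisely because the bookkeeping becomes unmanageable.

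There are two concrete gaps in your write-up. First, the stated induction variable $|S|+|T|$ is constant (it equals $k+k'$), so the induction as phrased is vacuous; what you actually mean is an induction on the number of factors already moved to normal order, but then the inductive hypothesis is no longer an instance of Claim \ref{claim:precise 1} itself --- it would be a statement about a half-processed product $\Lambda^{T'}_2(y) \cdot (\text{some }R\text{-matrices}) \cdot \Lambda^S_1(x) \cdot (\text{some }R\text{-matrices}) \cdot \Lambda^{T\setminus T'}_2(y)$ for a prefix $T' \subset T$. You would need to formulate and prove this more general intermediate statement, which your proposal does not do. Second, the reduction to fixed $(S,T)$ is a strictly stronger claim than Claim \ref{claim:precise 1}; the paper verifies it summand-by-summand only for $k=1$, and while it is plausible that it holds in general, you assert it without noting that this is an additional assumption. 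You also miss that the paper's argument is a simultaneous induction: the residue formula \eqref{eqn:iterate} driving the step for Claim \ref{claim:precise 1} is itself a consequence of Claim \ref{claim:precise 2} at $k=1$, so the two claims cannot be proved independently in the paper's scheme. Your direct approach sidesteps that entanglement, which is a potential advantage, but only if the normal-ordering bookkeeping can actually be closed --- and the proposal gives no evidence that the accumulated $R$-matrix spectral parameters and $f$-factors telescope correctly for $k\geq 2$, which is precisely the hard part that \eqref{eqn:iterate} lets the paper avoid.
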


\begin{claim}
\label{claim:precise 2}

As rational functions of $x$ and $y$, the only poles of \eqref{eqn:un}--\eqref{eqn:doi} are:
\begin{equation}
\label{eqn:pole} 
y = xp^{2a} \text{ for } a \in \{1,...,k\} \sqcup \{-k',...,-k'+k-1\}
\end{equation}
and the corresponding residue is:
\begin{equation}
\label{eqn:residue}
\emph{sgn}(a) (q - q^{-1}) (12) \left[ \bW^{(k'+a)}_1(x) R_{21} \left(p^{2k}\right) \bW^{(k-a)}_2(y) \prod_{i=1}^{k-1} f \left( p^{-2i} \right) \right] \Big|_{y = xp^{2a}}
\end{equation}
Note that $y = xp^{2a}$ is not among the poles of the expression \eqref{eqn:un}--\eqref{eqn:doi} when $(k,k')$ are replaced by $(k'+a,k-a)$ with $a$ as above, so \eqref{eqn:residue} is well-defined. \\ 

\end{claim}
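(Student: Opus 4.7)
My plan is to substitute the Miura factorization \eqref{eqn:bar w}
\[
\bW^{(k)}(x) = \sum_{1 \leq i_1 < \ldots < i_k \leq r} \Lambda^{(i_1)}(xp^{2(k-1)}) \cdots \Lambda^{(i_k)}(x)
\]
into the expression \eqref{eqn:un} and track every source of $x, y$-rational dependence. Since each $\Lambda^{(i)}(z)$ is a formal power series in its argument with coefficients in $U^{(r)}$, and in particular has no pole structure in $z$, all rational dependence on $x, y$ must come from the external $R$-matrix prefactors, the external $f$-factors, and the internal $R$-matrices produced when one commutes $\Lambda^{(i_s)}_1$ past $\Lambda^{(j_t)}_2$ via \eqref{eqn:rtt final}. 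This already confines every candidate pole to a ratio of the form $y/x = p^{2b}$ with $b$ in a finite set, so Claim \ref{claim:precise 2} reduces to identifying the surviving $b$'s and computing the corresponding residues.

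The next step is to iterate \eqref{eqn:rtt final} together with \eqref{eqn:rtt 1 new}--\eqref{eqn:rtt 3 new} in order to move each $\Lambda^{(j_t)}_2(yp^{2(k'-t)})$ past the entire $x$-word $\Lambda^{(i_1)}_1(xp^{2(k-1)}) \cdots \Lambda^{(i_k)}_1(x)$. Each transport produces internal $R$-matrices at ratios of shifted arguments and, thanks to the unitarity identity \eqref{eqn:unitary}, also produces scalar $f$-factors which telescope against the $f$-product already present in \eqref{eqn:un}. The concrete calculation I would carry out is to verify that this combined telescoping cancels every potential pole $y = xp^{2b}$ except those in the set \eqref{eqn:pole}; the $f$-product $\prod_{i = k'-k+1}^{k'-1} f(\tfrac{x}{yp^{2i}})$ in \eqref{eqn:un} is tailored exactly so as to make this cancellation work.

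For the residues, the crucial input is the elementary identity $\Res_{z=1} R(z) = -(q - q^{-1})(12)$. At each surviving pole $y = xp^{2a}$ precisely one of the $R$-matrices becomes singular and collapses to $-(q-q^{-1})(12)$. The permutation $(12)$ fuses the two auxiliary copies of $V$ into one; the remaining regular parts of the transport specialize to the factor $R_{21}(p^{2k})$ displayed in \eqref{eqn:residue}, and the still-regular $f$-factors specialize to $\prod_{i=1}^{k-1} f(p^{-2i})$. The sign $\mathrm{sgn}(a)$ falls out of the bookkeeping, reflecting whether the collapsing $R$-matrix arose during the transport of a $\Lambda^{(j_t)}_2$ with $j_t$ above or below the current $\Lambda^{(i_s)}_1$ in the ordering on $\{1,\ldots,r\}$.

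The hardest step will be the final combinatorial reorganization. After the $(12)$-collapse, the double sum over pairs of strictly increasing tuples $(\{i_s\}, \{j_t\})$ must reassemble into the double Miura sum for $\bW^{(k'+a)}_1(x) \bW^{(k-a)}_2(y)\big|_{y = xp^{2a}}$. The strict ordering $i_1 < \ldots < i_k$ in \eqref{eqn:bar w}, together with the fusion identities at coincident upper indices that are implicit in \eqref{eqn:rtt 1 new}--\eqref{eqn:rtt 2 new}, should force cross-terms with $i_s = j_t$ to cancel, so that the two Miura words of lengths $k$ and $k'$ really do recombine into two Miura words of lengths $k'+a$ and $k-a$. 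The pattern is the one carried out for $n=1$ in \cite{O}; the general-$n$ version requires carrying the $V \otimes V$-valued auxiliary $R$-matrices through every manipulation without losing the signs, argument shifts, and fusion constraints, which is routine but tedious.
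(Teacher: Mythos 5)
Your plan is, at the level of intent, the same calculation the paper carries out \emph{only for the base case $k=1$}: substitute the Miura product \eqref{eqn:bar w}, use \eqref{eqn:rtt final} to transport the $x$-word past the $y$-word, note that all poles come from $R$-matrices, and observe that almost all of them cancel in pairs. For $k=1$ this is exactly formulas \eqref{eqn:for 1}--\eqref{eqn:for 2} and the cancellation of \eqref{eqn:a1} against \eqref{eqn:a2}, and it works because the $f$-product in \eqref{eqn:un} is \emph{empty}. For $k \geq 2$ the paper does \textbf{not} redo this direct computation; instead it proves Claims \ref{claim:precise 1} and \ref{claim:precise 2} together by induction on $k$, using the iterated-residue identity
\begin{equation*}
\bW_1^{(k)}(x) = \underset{z = xp^2}{\text{Res}} \left[ \frac {(1\bullet)}{q-q^{-1}} \, \bW^{(k-1)}_\bullet(z) R_{1\bullet}\!\left(\tfrac {xp^2}z\right) \bW^{(1)}_1(x) \right]
\end{equation*}
(formula \eqref{eqn:iterate}), which reduces the $(k,k')$ pole/residue analysis to the already-known $(k-1,k'+1)$ and $(1,\cdot)$ cases. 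That identity is the engine of the whole proof, and it is entirely missing from your proposal.

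This matters because the step you dismiss as ``routine but tedious'' is where the actual difficulty lies, and a direct attack on it for general $k$ has concrete obstructions your outline does not address. First, the factor $\prod_{i=k'-k+1}^{k'-1} f(x/(yp^{2i}))$ in \eqref{eqn:un} carries \emph{double} poles at $y = xp^{2a}$ for $a \in \{-k'+1,\ldots,k-k'-1\}$, whereas \eqref{eqn:residue} asserts the product has only simple poles; so the $f$-factors must combine with double zeros coming from the interior of the transport, and the mechanism for that cancellation is not ``telescoping via unitarity'' in any obvious sense — you need to know where the zeros come from, which is precisely what the inductive structure delivers. Second, the clean pairing that kills interior poles when $k=1$ (pole from the $R$ to the left of $\Lambda^{(i)}_1$ at $y = xp^{-2(k'-b)}$ cancels the pole from the $R$ to the right at $y = xp^{-2(k'-b+1)}$, as both tuples $(i,\ldots<j_b<\ldots)$ and $(j_b,\ldots<i<\ldots)$ appear in the sum) does not generalize mechanically to two Miura words of lengths $k$ and $k'$: the bookkeeping of which summands pair with which, and why the cross-terms with coincident upper indices drop, is genuinely more involved and you have given no argument for it. Third, even after a collapse $(12)$ at a surviving pole, reassembling the double sum over increasing tuples into the Miura expansion of $\bW^{(k'+a)}_1 \bW^{(k-a)}_2$ is a nontrivial combinatorial identity; the paper never proves it directly but instead gets it for free from the residue formula and the induction hypothesis. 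In short: your outline correctly identifies what kind of statement needs to be proved, but it replaces the paper's inductive bootstrap with a direct computation whose hard steps you neither carry out nor supply a mechanism for, so as written this is not a proof.
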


\noindent We will prove Claims \ref{claim:precise 1} and \ref{claim:precise 2} simultaneously, by induction on $k$, so let us first take care of the base case $k=1$. To this end, we need to show how to normally order products of $\Lambda^{(i)}(x)$'s. We will find it easier to work with the notation:
\begin{equation}
\label{eqn:not}
\Lambda^{(i_a,...,i_b)} = \Lambda^{(i_a)}(xp^{2(k-a)}) ... \Lambda^{(i_b)}(xp^{2(k-b)})
\end{equation}
for all $i_1<...< i_a < ... < i_b < ... < i_k \in \{1,...,r\}$ (the numbers $a,b,k$, although not part of the notation \eqref{eqn:not}, will always be clear from context). By iterating relation \eqref{eqn:rtt final}, we obtain the following formula if $j_1 < ... < j_k < i$:
$$
\Lambda_1^{(i)} R_{21} \left(\frac {yp^{2k}}x \right) \Lambda_2^{(j_1,...,j_k)} = R_{12} \left( \frac x{y p^{2(k-1)}} \right)^{-1} \Lambda_2^{(j_1,...,j_k)} R_{12} \left( \frac {xp^2}y \right) \Lambda_1^{(i)} R_{21} \left( \frac yx \right) 
$$
while if $i < j_1 < ... < j_k$:
$$
\Lambda_2^{(j_1,...,j_k)} R_{12} \left(\frac {xp^2}y \right) \Lambda_1^{(i)} = R_{12} \left( \frac x{y p^{2(k-1)}} \right) \Lambda_1^{(i)} R_{21} \left( \frac {yp^2}x \right) \Lambda_2^{(j_1,...,j_k)} R_{21} \left( \frac yx \right)^{-1}
$$	
Throughout this proof, all series denoted by $\Lambda_1$ will have variable $x$, and all series denoted by $\Lambda_2$ will have variable $y$. Using the identities above, we have the following normal-ordering formulas for any numbers $i$ and $j_1 < ... < j_{k'} \in \{1,...,r\}$:
\begin{equation}
\label{eqn:for 1}
R_{12} \left(\frac {xp^2}{yp^{2k'}} \right) \Lambda^{(i)}_1 R_{21} \left(\frac {yp^{2k'}}x \right) \Lambda^{(j_1,...,j_{k'})}_2 =
\end{equation}
$$
= \Lambda_2^{(1,...,j_{b-1})} R_{12} \left( \frac {x}{yp^{2(k'-b)}} \right) \Lambda_1^{(i)} R_{21} \left( \frac {yp^{2(k'-b+1)}}{x} \right) \Lambda_2^{(j_b,...,j_{k'})} 
$$
where $b$ is such that $j_{b-1} < i \leq j_b$, and:
\begin{equation}
\label{eqn:for 2}
\Lambda^{(j_1,...,j_{k'})}_2 R_{12} \left(\frac {xp^2}y \right) \Lambda^{(i)}_1 R_{21} \left(\frac yx \right) = 
\end{equation}
$$
= \Lambda_2^{(j_1,...,j_{b'-1})} R_{12} \left( \frac x{yp^{2(k'-b')}} \right) \Lambda_1^{(i)} R_{21} \left( \frac {yp^{2(k'-b'+1)}}x \right) \Lambda_2^{(j_{b'},...,j_{k'})} 
$$
where $b'$ is such that $j_{b'-1} \leq i < j_{b'}$. Here we have two cases: \\

\begin{itemize}[leftmargin=*]
	
\item if $i = j_c$ for some $c$, then formulas \eqref{eqn:rtt 1 new}--\eqref{eqn:rtt 3 new} imply that the right-hand sides of \eqref{eqn:for 1}--\eqref{eqn:for 2} are both equal to the following normally ordered expression:
\begin{equation}
\label{eqn:for 3}
\Lambda_2^{(j_1,...,j_{c-1})} \Lambda_2^{(c)+} \Lambda_1^{(i)+} \Lambda_1^{(i)-} \Lambda_2^{(c)-}  \Lambda_2^{(j_{c+1},...,j_{k'})} 
\end{equation}

\item if $i \notin \{j_1,...,j_{k'}\}$, then $b=b'$, hence the RHS of \eqref{eqn:for 1} and \eqref{eqn:for 2} are equal. \\

\end{itemize} 

\noindent The bullets above prove the $k=1$ case of Claim \ref{claim:precise 1}. As for Claim \ref{claim:precise 2}, we need to study the poles of \eqref{eqn:for 1}--\eqref{eqn:for 2} and \eqref{eqn:for 3} as rational functions in $x$ and $y$. The latter does not have any poles, while those of the former all come from the unique poles of the $R$--matrix:
\begin{equation}
\label{eqn:a1}
y = \frac x{p^{2(k'-b)}}, \ \ \text{with residue } \Lambda_2^{(j_1,...,j_{b-1})} (q-q^{-1})(12) \Lambda_1^{(i)} R_{21} (p^2) \Lambda_2^{(j_b,...,j_{k'})} 
\end{equation}
\begin{equation}
\label{eqn:a2}
y = \frac x{p^{2(k'-b+1)}}, \text{with residue } \Lambda_2^{(j_1,...,j_{b-1})} R_{12} (p^2) \Lambda_1^{(i)}  (q^{-1}-q)(12) \Lambda_2^{(j_b,...,j_{k'})} 
\end{equation}
We note that \eqref{eqn:a1} for a collection $(i,...<j_b < ... )$ precisely cancels out \eqref{eqn:a2} for the collection $(j_b, ...< i < ... )$. Therefore, as we sum over all $i$ and $j_1 < ... < j_{k'}$ in $\{1,...,r\}$, the only residues which survive are \eqref{eqn:a1} when $i > j_{k'}$ and \eqref{eqn:a2} when $i < j_1$. We conclude that the poles of the rational function \eqref{eqn:un}--\eqref{eqn:doi} are:
\begin{align*}
&y = xp^2, \quad \text{with residue } \sum_{j_1 < ... < j_{k'} < i} \delta \left(\frac y{xp^2} \right) \Lambda_2^{(j_1,...,j_{k'})} (q-q^{-1})(12) \Lambda_1^{(i)} R_{21}(p^2) \\
&y = \frac x{p^{2k'}}, \text{ with residue } \sum_{i < j_1 < ... < j_{k'}} \delta \left(\frac {yp^{2k'}}{x} \right) R_{12}(p^2) \Lambda_1^{(i)} (q^{-1}-q)(12) \Lambda_2^{(j_1,...,j_{k'})}
\end{align*}
This is precisely expression \eqref{eqn:residue} for $k=1$, which concludes the base case of Claims \ref{claim:precise 1} and \ref{claim:precise 2}. In particular, the fact that:
$$
\underset{z = xp^2}{\text{Res}} \left[ \bW^{(k-1)}_\bullet(z) R_{1\bullet} \left(\frac {xp^2}z \right) \bW^{(1)}_1(x) R_{\bullet 1} \left(\frac zx\right) \right] = (q-q^{-1})(1\bullet) \bW_1^{(k)}(x) R_{\bullet 1} (p^2)
$$
(we use the notation $z,\bullet$ instead of $y,2$) implies that:
\begin{equation}
\label{eqn:iterate}
\bW_1^{(k)}(x)  = \underset{z = xp^2}{\text{Res}} \left[ \frac {(1\bullet)}{q-q^{-1}} \cdot \bW^{(k-1)}_\bullet(z) R_{1\bullet} \left(\frac {xp^2}z \right) \bW^{(1)}_1(x) \right] 
\end{equation}
In other words, the higher currents can be expressed as iterated residues of products of lower currents. Let us now assume that Claims \ref{claim:precise 1} and \ref{claim:precise 2} are proved for $k-1$ and deduce them for $k$, thus proving the induction step. By \eqref{eqn:iterate}, we have:
$$
\Big(\text{formula \eqref{eqn:un}} \Big) = \underset{z = xp^2}{\text{Res}} \left[\frac {(1\bullet)}{q-q^{-1}} \cdot \right.
$$
$$
\left. R_{\bullet 2} \left(\frac {xp^{2k}}{yp^{2k'}} \right)  \bW^{(k-1)}_\bullet(z) R_{1\bullet} \left(\frac {xp^2}z \right) \bW^{(1)}_1(x) R_{21} \left(\frac {yp^{2k'}}x \right) \bW^{(k')}_2(y) \prod_{i=k'-k+1}^{k'-1} f \left(\frac x{y p^{2i}} \right) \right]
$$
$$
\Big(\text{formula \eqref{eqn:doi}} \Big) = \underset{z = xp^2}{\text{Res}} \left[\frac {(1\bullet)}{q-q^{-1}} \cdot \right.
$$
$$
\left. \bW^{(k')}_2(y) R_{\bullet 2} \left(\frac {xp^{2k}}y \right) \bW^{(k-1)}_\bullet(z) R_{1\bullet} \left(\frac {xp^2}z \right) \bW^{(1)}_1(x) R_{21} \left(\frac yx \right) \prod_{i=1}^{k-1} f \left( \frac y{x p^{2i}} \right)  \right]
$$
as identities in $\End(V \otimes V \otimes V)$, with the three copies of $V$ being denoted by $1,2,\bullet$. By the induction hypothesis of Claim \ref{claim:precise 1}, we can rewrite the formulas above as:
\begin{equation}
\label{eqn:mi}
\Big(\text{formula \eqref{eqn:un}} \Big) = \underset{z = xp^2}{\text{Res}} \left[\frac {(1\bullet)}{q-q^{-1}} R_{\bullet 2} \left(\frac {zp^{2k-2}}{yp^{2k'}} \right)  \bW^{(k-1)}_\bullet(z)  \right.
\end{equation} 
$$
\left. \squiggly{R_{1\bullet} \left(\frac {xp^2}z \right)} R_{21} \left(\frac {yp^{2k'}}z \right) \bW^{(k')}_2(y) R_{12} \left(\frac {xp^2}y \right) \bW^{(1)}_1(x) R_{21} \left(\frac yx \right)   \prod_{i=k'-k+1}^{k'-2} f \left(\frac x{y p^{2i}} \right) \right]
$$
and:
\begin{equation}
\label{eqn:pi}
\Big(\text{formula \eqref{eqn:doi}} \Big) = \underset{z = xp^2}{\text{Res}} \left[\frac {(1\bullet)}{q-q^{-1}} R_{\bullet 2} \left( \frac {zp^{2k-2}}{yp^{2k'}} \right) \bW^{(k-1)}_\bullet(z) \right.
\end{equation}
$$
\left. R_{2\bullet}  \left(\frac {yp^{2k'}}z \right)  \bW^{(k')}_2(y) R_{\bullet 2} \left( \frac {xp^2}y \right) \squiggly{R_{1\bullet} \left(\frac {xp^2}z \right)} \bW^{(1)}_1(x) R_{21} \left(\frac yx \right) \prod_{i=k'-k+1}^{k'-2} f \left( \frac x{y p^{2i}} \right) \right]  
$$
We observe that the right-hand sides of the formulas above are represented by the same rational function, because the factor with the squiggly underline has residue at $z = xp^2$ equal to a multiple of the permutation matrix $(1\bullet)$, hence it can be moved left-to-right by changing the indices of the various other factors along the way. \footnote{However, the factor $R_{1\bullet} \left(\frac {xp^2}z \right)$ cannot be moved past $\bW^{(k-1)}_\bullet(z)$ and $\bW_1^{(1)}(x)$ without changing the overall residue; fortunately, our argument does not require this.} This establishes the induction step of Claim \ref{claim:precise 1}. \\

\noindent As for the induction step of Claim \ref{claim:precise 2}, we need to identify all the poles and the corresponding residues of the rational functions \eqref{eqn:mi}--\eqref{eqn:pi}. In general, if we have rational functions $F(z,y)$ with poles $\{y=z\alpha\}$ and $G(y,x)$ with poles $\{y = x\beta\}$, then the poles of the rational function $F(z,y)G(y,x)$ in the variable $y$ are:
\begin{align*}
&y = z\alpha \quad \text{with residue} \quad \left[ \underset{y = x\alpha}{\text{Res}} F(z,y) \right] G(z\alpha,x) \\
&y = x\beta \quad \text{with residue} \quad F(z,x\beta )\left[ \underset{y = x\beta}{\text{Res}} G(y,x) \right] 
\end{align*}
as long as all the poles are simple (which will be the case in our situation). In the case at hand, the induction hypothesis implies that:
$$
\bW^{(k')}_2(y) R_{12} \left(\frac {xp^2}y \right) \bW^{(1)}_1(x) R_{21} \left(\frac yx \right) 
$$
has poles:
\begin{align}
&y = x p^2 \quad \text{with residue } \quad (q-q^{-1})(12) \bW_1^{(k'+1)}(x) R_{21}(p^2) \label{eqn:pole 1} \\
&y = \frac x{p^{2k'}} \quad \text{with residue} \quad (q^{-1}-q)(12) R_{21}(p^2) \bW_2^{(k'+1)} \left( \frac x{p^{2k'}} \right) \label{eqn:pole 2}
\end{align}
and that:
$$
R_{\bullet 2} \left( \frac {zp^{2k-2}}{yp^{2k'}} \right) \bW^{(k-1)}_\bullet(z) R_{2\bullet}  \left(\frac {yp^{2k'}}z \right)  \bW^{(k')}_2(y) \prod_{i=k'-k+2}^{k'-1} f \left(\frac z{y p^{2i}} \right) 
$$
has poles:
\begin{equation}
\label{eqn:pole 3} 
y = z p^{2b} \quad \text{with residue} \quad (q-q^{-1}) (\bullet 2) \cdot
\end{equation}
$$
\left[ \bW^{(k'+b)}_\bullet(z) R_{2\bullet} \left(p^{2k-2}\right) \bW^{(k-b-1)}_2(zp^{2b}) \prod_{i=1}^{k-2} f \left( p^{-2i} \right) \right] 
$$
\begin{equation}
\label{eqn:pole 4} 
y = z p^{2c} \quad \text{with residue} \quad (q^{-1}-q) (\bullet 2) \cdot 
\end{equation}
$$
\left[ \bW^{(k'+c)}_\bullet(z) R_{2\bullet} \left(p^{2k-2}\right) \bW^{(k-c-1)}_2(zp^{2c}) \prod_{i=1}^{k-2} f \left( p^{-2i} \right) \right] 
$$
where $b \in \{1,...,k-1\}$ and $c \in \{-k',...,-k'+k-2\}$. Since ultimately we will take the residue at $z = xp^2$, it is easy to see that the poles \eqref{eqn:pole 1}--\eqref{eqn:pole 4} are the same as those prescribed in \eqref{eqn:pole}. Let us now check that the residues also match up with \eqref{eqn:residue}. We will do the computation for the poles \eqref{eqn:pole 2} and \eqref{eqn:pole 4}, and leave the analogous cases of the poles \eqref{eqn:pole 1} and \eqref{eqn:pole 3} as exercises to the interested reader. \\

\noindent By \eqref{eqn:pole 2}, the residue of expressions \eqref{eqn:mi}--\eqref{eqn:pi} at $y = xp^{-2k'}$ is equal to:
$$
- \underset{z = xp^2}{\text{Res}} \left[ (1\bullet)  R_{\bullet 2} (p^{2k}) \bW^{(k-1)}_\bullet(z) R_{1\bullet} \left(\frac {xp^2}z \right) R_{21} \left( p^{-2} \right) \right. 
$$
$$
\left. (12) R_{21}(p^2) \bW_2^{(k'+1)} \left( \frac x{p^{2k'}} \right) \prod_{i=2}^{k-1} f \left( p^{-2i} \right) \right]
$$
If we move the permutation $(12)$ all the way to the left-hand side of the formula (and replace the product $R_{12}(p^{-2}) R_{21}(p^2)$ by $f(p^{-2})$), the expression above equals:
$$
- (12) \underset{z = xp^2}{\text{Res}} \left[ (2\bullet) R_{\bullet 1} (p^{2k}) \underbrace{\bW^{(k-1)}_\bullet(z) R_{2\bullet} \left(\frac {xp^2}z \right) \bW_2^{(k'+1)} \left( \frac x{p^{2k'}} \right)}_{(q-q^{-1})(2\bullet) W_2^{(k+k')}(xp^{-2k'})} \prod_{i=1}^{k-1} f \left( p^{-2i} \right) \right]
$$
The underbraced term has residue equal to the expression underneath it due to the induction hypothesis of \eqref{eqn:residue} for the pair $(k-1,k'+1)$, hence the formula above matches \eqref{eqn:residue} for the pair $(k,k')$. Similarly, due to \eqref{eqn:pole 4}, the residue of expressions \eqref{eqn:mi}--\eqref{eqn:pi} at $y = zp^{2c}$ (with $c \in \{-k',...,-k'+k-2\}$) is equal to:
$$
- \underset{z = xp^2}{\text{Res}} \left[(1\bullet) (\bullet 2) \bW^{(k'+c)}_\bullet(z) R_{2\bullet} \left(p^{2k-2}\right) \right.
$$
$$
\left. \bW^{(k-c-1)}_2(zp^{2c})  R_{\bullet 2} \left( p^{-2c} \right) \squiggly{R_{1\bullet} \left(\frac {xp^2}z \right)} \bW^{(1)}_1(x) R_{21} \left( p^{2c+2} \right)   \prod_{i=1}^{k-2} f \left( p^{-2i} \right) \right]
$$
The term with the squiggly underline has residue at $z = xp^2$ equal to a multiple of the permutation matrix $(1\bullet)$, so it can be moved left-to-right by changing the indices of various factors along the way. Hence the expression above equals:
$$
- \underset{z = xp^2}{\text{Res}} \left[(1\bullet) (\bullet 2) \bW^{(k'+c)}_\bullet(z) R_{1\bullet} \left(\frac {xp^2}z \right)  \right.
$$
$$
\left. R_{21} \left(p^{2k-2}\right) \underbrace{\bW^{(k-c-1)}_2(xp^{2c+2})  R_{1 2} \left( p^{-2c} \right)  \bW^{(1)}_1(x) R_{21} \left( p^{2c+2} \right)}_{R_{12}(p^{2-2k}) \bW^{(1)}_1(x) R_{21} (p^{2k}) \bW^{(k-c-1)}_2(xp^{2c+2})} \prod_{i=1}^{k-2} f \left( p^{-2i} \right) \right]
$$
The underbraced term is equal to the expression underneath it due to the equality between the rational functions representing \eqref{eqn:un} and \eqref{eqn:doi} for the pair $(1,k-c-1)$. Therefore, the expression above equals:
$$
-\underset{z = xp^2}{\text{Res}} \left[(1\bullet) (\bullet 2) \underbrace{\bW^{(k'+c)}_\bullet(z) R_{1\bullet} \left(\frac {xp^2}z \right) \bW^{(1)}_1(x)}_{(1\bullet)(q-q^{-1})\bW_1^{(k'+c+1)}(x)} R_{21} (p^{2k}) \bW^{(k-c-1)}_2(xp^{2c+2}) \prod_{i=1}^{k-1} f \left( p^{-2i} \right) \right]
$$
The underbraced term has residue equal to the expression underneath due to \eqref{eqn:residue} for the pair $(1,k'+c)$. Since the expression above matches \eqref{eqn:residue} for the pair $(k,k')$, the induction step of Claim \ref{claim:precise 2} is complete. \\

\end{proof} 

\section{The first shuffle algebra}
\label{sec:shuf 1}

\subsection{} We will often write elements $X \in \End(V^{\otimes k})$ as $X_{1...k}$ so as to point out the set of indices of $X$. Letting $E_{ij} \in \End(V)$ be the usual elementary matrix, we have:
\begin{equation}
\label{eqn:basis}
X = \sum_{i_1,...,i_k,j_1,...,j_k} \gamma \cdot E_{i_1j_1} \otimes ... \otimes E_{i_kj_k}
\end{equation}
for certain coefficients $\gamma$. For any permutation $\sigma \in S(k)$, we write:
\begin{equation}
\label{eqn:conjugation}
\sigma X \sigma^{-1} = X_{\sigma(1)...\sigma(k)}
\end{equation}
where $\sigma \curvearrowright V^{\otimes k}$ by permuting the factors (therefore, the effect of conjugating \eqref{eqn:basis} by $\sigma$ is to replace the indices $i_{\sigma(1)},...,j_{\sigma(k)}$ by $i_1,...,j_k$). Moreover, we will write:
\begin{equation}
\label{eqn:pseudo sweedler}
X_{1...k} = X_{1...i} \otimes X_{i+1...k} \in \End(V^{\otimes i}) \otimes \End(V^{\otimes k-i}) \cong \End(V^{\otimes k})
\end{equation}
if we wish to set apart the first $i$ tensor factors from the last $k-i$ tensor factors of $X$. There is an implicit summation in the right-hand side of \eqref{eqn:pseudo sweedler} which we will not write down, much alike Sweedler notation. For any $a \in \BN$, we will write:
$$
E_{ij}^{(a)} = 1 \otimes ... \otimes \underbrace{E_{ij}}_{a\text{--th position}} \otimes \ ... \otimes 1 \in \End(V^{\otimes k})
$$
(the number $k \geq a$ will always be clear from context). More generally, for any $X \in \End(V^{\otimes k})$ and any collection of distinct natural numbers $a_1,...,a_k$, write:
$$
X_{a_1...a_k} \in \End(V^{\otimes N})
$$
(the number $N \geq a_1,...,a_k$ will always be clear from context) for the image of $X$ under the map $\End(V^{\otimes k}) \rightarrow \End(V^{\otimes N})$ that sends the $i$--th factor of the domain to the $a_i$--th factor of the codomain, and maps to the unit in all factors $\neq \{a_1,...,a_k\}$. \\

\subsection{} 

Consider a parameter $\oq$ and denote $p = q^n \oq$. Then define:
\begin{align}
&\tR^+(x) = R_{21} \left( \displaystyle \frac 1{x\oq^2} \right) \qquad \qquad \in \End_{\BQ(q,\oq)}(V \otimes V)(x) \label{eqn:tr plus} \\ 
&\tR^-(x) = D_2 R_{21} \left( \displaystyle \frac {p^2}x \right) D_2^{-1} \quad \ \in \End_{\BQ(q,\oq)}(V \otimes V)(x) \label{eqn:tr minus}
\end{align}
where $D = \text{diag}(q^2,...,q^{2n}) \in \End(V)$. \\

\begin{definition}

For either choice of the sign $\pm$, consider the vector space:
\begin{equation}
\label{eqn:shuf new}
\bigoplus_{k = 0}^\infty \emph{End}_{\fff}(\underbrace{V \otimes ... \otimes V}_{k \text{ factors}})(z_1,...,z_k)^{\emph{Sym}}
\end{equation}
\footnote{The superscript ``Sym" refers to the vector subspace of symmetric tensors $X_{1...k}(z_1,...,z_k)$, i.e. those which satisfy the identity: 
\begin{equation}
\label{eqn:symmetry identity}
X_{1...k}(z_1,...,z_k) = R_\sigma \cdot X_{\sigma(1)...\sigma(k)} (z_{\sigma(1)},...,z_{\sigma(k)}) \cdot R_\sigma^{-1}
\end{equation}
for all permutations $\sigma$, where $R_\sigma$ is any braid lift of the permutation $\sigma$ (see Figure 1).} and endow it with an associative algebra structure, by setting $X*Y$ equal to:
\begin{equation}
\label{eqn:shuf prod}
\sum^{a_1<...<a_k, \ b_1<...<b_l}_{\{1,...,k+l\} = \{a_1,...,a_k\} \sqcup \{b_1,...,b_l\}} \left[ \prod_{i=k}^{1} \prod_{j=1}^l \underbrace{R_{a_ib_j} \left( \frac {z_{a_i}}{z_{b_j}} \right)}_{\text{only if }a_i < b_j} \right]
\end{equation}
$$
X_{a_1...a_k}(z_{a_1},...,z_{a_k}) \left[ \prod_{i=1}^k \prod_{j=l}^{1} \tR^\pm_{a_ib_j}  \left( \frac {z_{a_i}}{z_{b_j}} \right) \right] Y_{b_1...b_l}(z_{b_1},...,z_{b_l})  \left[  \prod_{i=k}^{1} \prod_{j=1}^l \underbrace{R_{a_ib_j} \left( \frac {z_{a_i}}{z_{b_j}} \right)}_{\text{only if }a_i > b_j} \right] 
$$
The operation $X * Y$ will be called the ``shuffle product". \\
	
\end{definition}

\subsection{} There is a pictorial way to represent the shuffle product above. To this end, elements of $\text{End}(V^{\otimes k})(z_1,...,z_k)$ (which will henceforth be called matrices) will be represented by braids on $k$ strands: every strand carries a label from $1,...,k$ that represents one of the tensor factors of $V^{\otimes k}$, and is decorated by one of the variables $z_1,...,z_k$. For example, the $R$--matrices:
$$
R_{ab} \left( \frac {z_a}{z_b} \right) \quad \text{and} \quad R_{ba}^{-1} \left(\frac {z_b}{z_a} \right)
$$
(for any indices $a,b \in \{1,...,k\}$) are represented by the two braids in Figure 1. \\

\begin{figure}[h]
\centering
\includegraphics[scale=0.3]{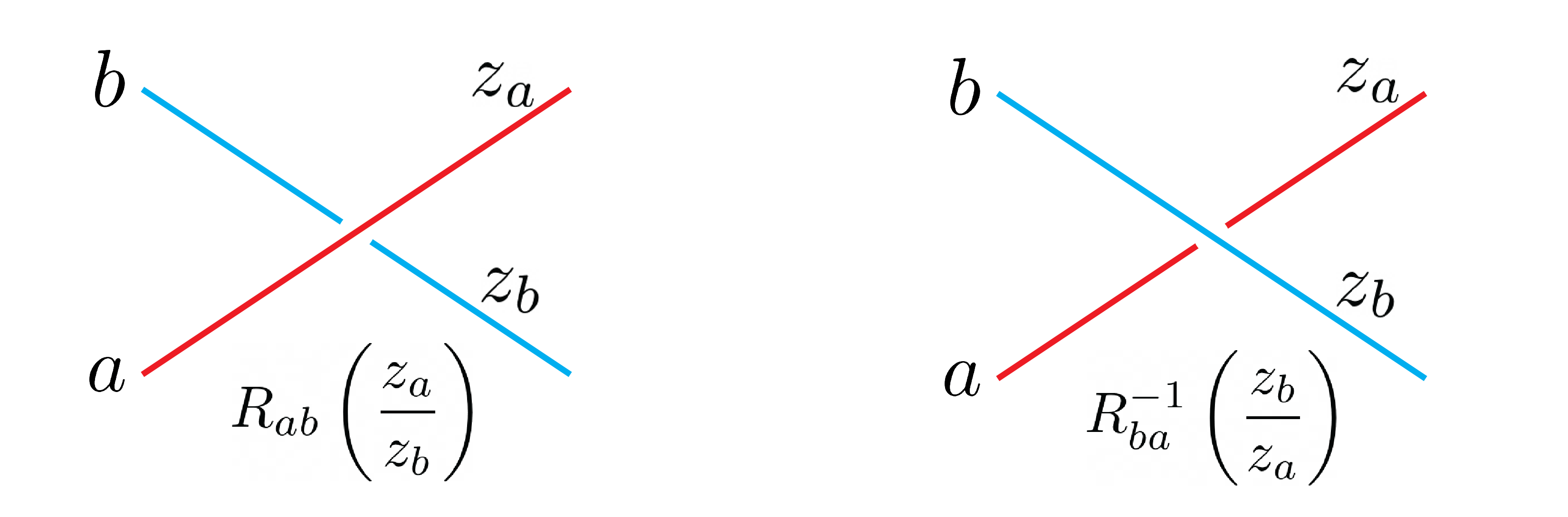} 
\caption{The $R$--matrices as crossings}
\end{figure}

\noindent The Yang-Baxter equation \eqref{eqn:ybe} corresponds to a Reidemeister III move: this is a statement about isotopic braids representing the same matrix. We will therefore feel free to move braids around in our pictures, without changing the particular element of $\text{End}(V^{\otimes k})(z_1,...,z_k)$ that they represent. Similarly, changing a crossing such as in Figure 1 in a braid has the effect of multiplying the corresponding matrix by the scalar-valued rational function $f(z_a/z_b)$, due to \eqref{eqn:unitary}. \\

\noindent The usual matrix multiplication of elements in $\text{End}(V^{\otimes k})(z_1,...,z_k)$ is represented as left-to-right concatenation of braids. Meanwhile, the summand of the shuffle product \eqref{eqn:shuf prod} corresponding to a fixed collection of indices $a_1<...<a_k$ and $b_1<...<b_l$ can be represented by the braid: 

\begin{figure}[h]
	\centering
	\includegraphics[scale=0.3]{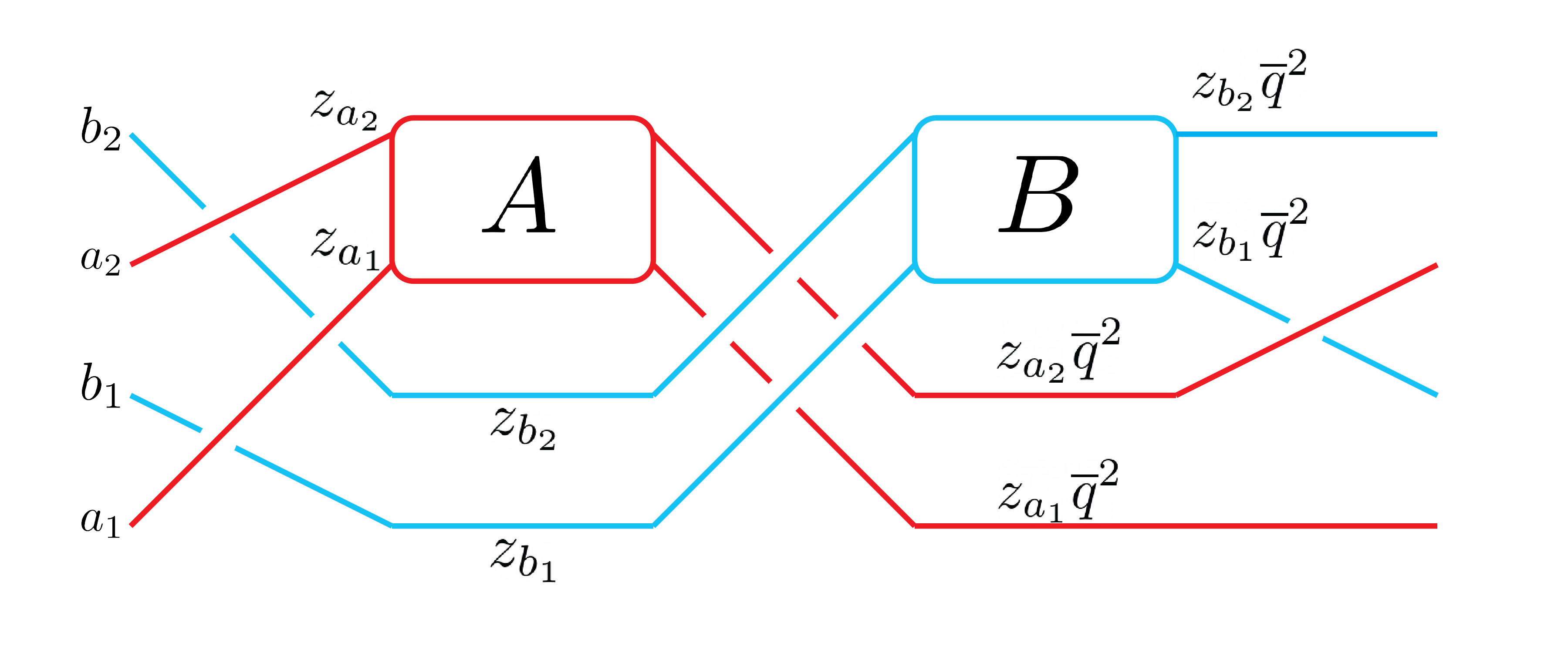} 
	\caption{The shuffle product as braids}
\end{figure}

\noindent (we depicted the case $\pm = +$, $(a_1,a_2) = (1,3)$ and $(b_1,b_2) = (2,4)$) in Figure 2). We make the convention that the variable on a strand does not change, except at a box. \\

\subsection{} For any permutation $\sigma \in S(k)$, we will consider the corresponding permutation operator $\sigma \in \End(V^{\otimes k})$. Letting $(ab)$ denote the transposition of $a$ and $b$, we will consider the following braid, called a \textbf{colon}:

\begin{figure}[h]
	\centering
	\includegraphics[scale=0.3]{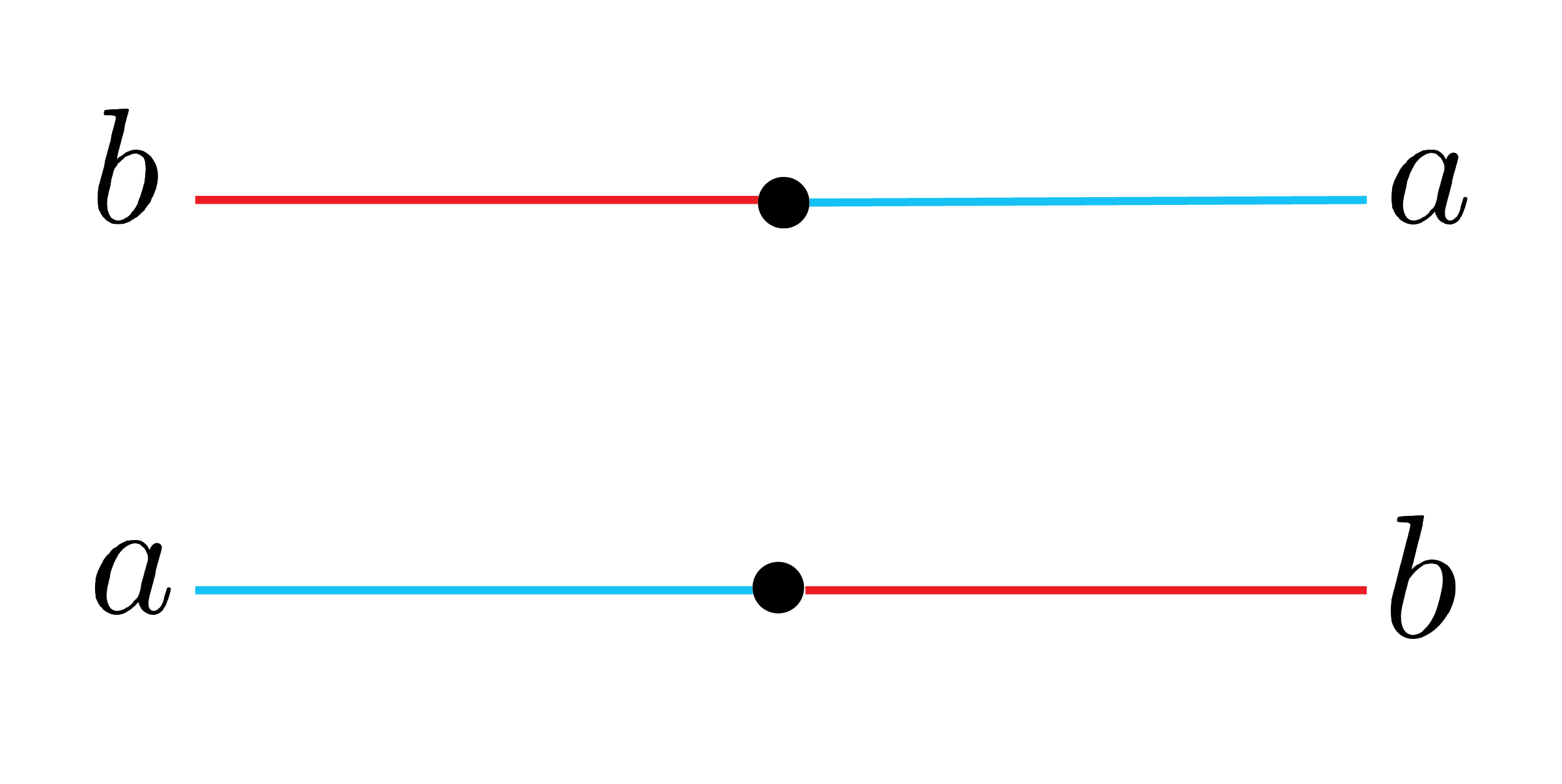} 
	\caption{A permutation braid}
\end{figure}

\noindent which we stipulate denotes the following multiple of a permutation operator:
\begin{equation}
\label{eqn:permutation}
(q^{-1} - q)  (ab)\in \End(V^{\otimes k})
\end{equation}
This operator is relevant to us because it matches the residue:
\begin{equation}
\label{eqn:residues}
\underset{x = 1}{\text{Res }} R_{ab}(x)  = (q^{-1} - q)(ab)
\end{equation}
We note that the label $a,b$ of the two strands in Figure 3 changes when encountering the colon, but the variable on each strand does not. This is the case in order to ensure identities such as:
$$
(ab) R_{ac} \left( \frac xy \right) = R_{bc} \left( \frac xy \right) (ab)
$$

\subsection{} 

For any matrix-valued rational function $X(z_1,...,z_k)$ with at most simple poles at $\{z_i - z_j \oq^2\}_{1 \leq i < j \leq k}$, we let:
\begin{equation}
\label{eqn:def residue}
\underset{\{z_1 = y, z_2 = y \oq^2,..., z_i = y \oq^{2(i-1)}\}}{\Res} X
\end{equation}
be the rational function in $y,z_{i+1},...,z_k$ obtained by successively taking the residue at $z_2 = z_1 \oq^2$, then at $z_3 = z_1 \oq^4$,..., then at $z_i = z_1 \oq^{2(i-1)}$ and finally relabeling the variable $z_1 \leadsto y$. More generally, for any collection of natural numbers:
\begin{equation}
\label{eqn:composition}
1 = c_1 < c_2 < ... < c_u < c_{u+1} = k + 1
\end{equation}
we will write:
\begin{equation}
\label{eqn:iterated}
\underset{\{z_{c_s} = y_s, z_{c_s+1} = y_s \oq^2, ..., z_{c_{s+1}-1} = y_s \oq^{2(c_{s+1}-c_s - 1)} \}_{\forall s \in \{1,...,u\}}}{\Res} X
\end{equation}
for the rational function in $y_1,..,y_u$ obtained by applying the iterated residue \eqref{eqn:def residue} to the groups of variables indexed by $\{c_1,c_1+1,...,c_2-1\}$, ..., $\{c_u,c_u+1,...,c_{u+1}-1\}$. \\

\begin{definition}
\label{def:shuf aff}
	
Let $\CA^+$ be the subset of \eqref{eqn:shuf new} consisting of matrix-valued rational functions with at most simple poles at $z_i - z_j \oq^2$, such that for any sequence \eqref{eqn:composition}, the iterated residue \eqref{eqn:iterated} is of the form depicted in Figure 4, for some $X^{(\lambda_1,...,\lambda_u)} \in \eEnd(V^{\otimes u})(y_1,...,y_u)$, where $\lambda_s = c_{s+1}-c_s$ for all $s \in \{1,...,u\}$. \footnote{The symbol ``blue over blue" is an artifact of \cite{Tale}, and it refers to the rational function:
$$
\left[ \prod_{1 \leq s<s' \leq u} \prod_{i=1}^{\lambda_s-1} \prod_{i' = 1}^{\lambda_{s'}-1} f \left(\frac {y_s \oq^{2i}}{y_{s'}\oq^{2i'}} \right) \right] \left[ \prod_{s=1}^u \prod_{1 \leq i < i' < \lambda_s} f(\oq^{2(i-i')}) \right]
$$} \\

\begin{figure}[ht]    
	\centering
	\includegraphics[scale=0.3]{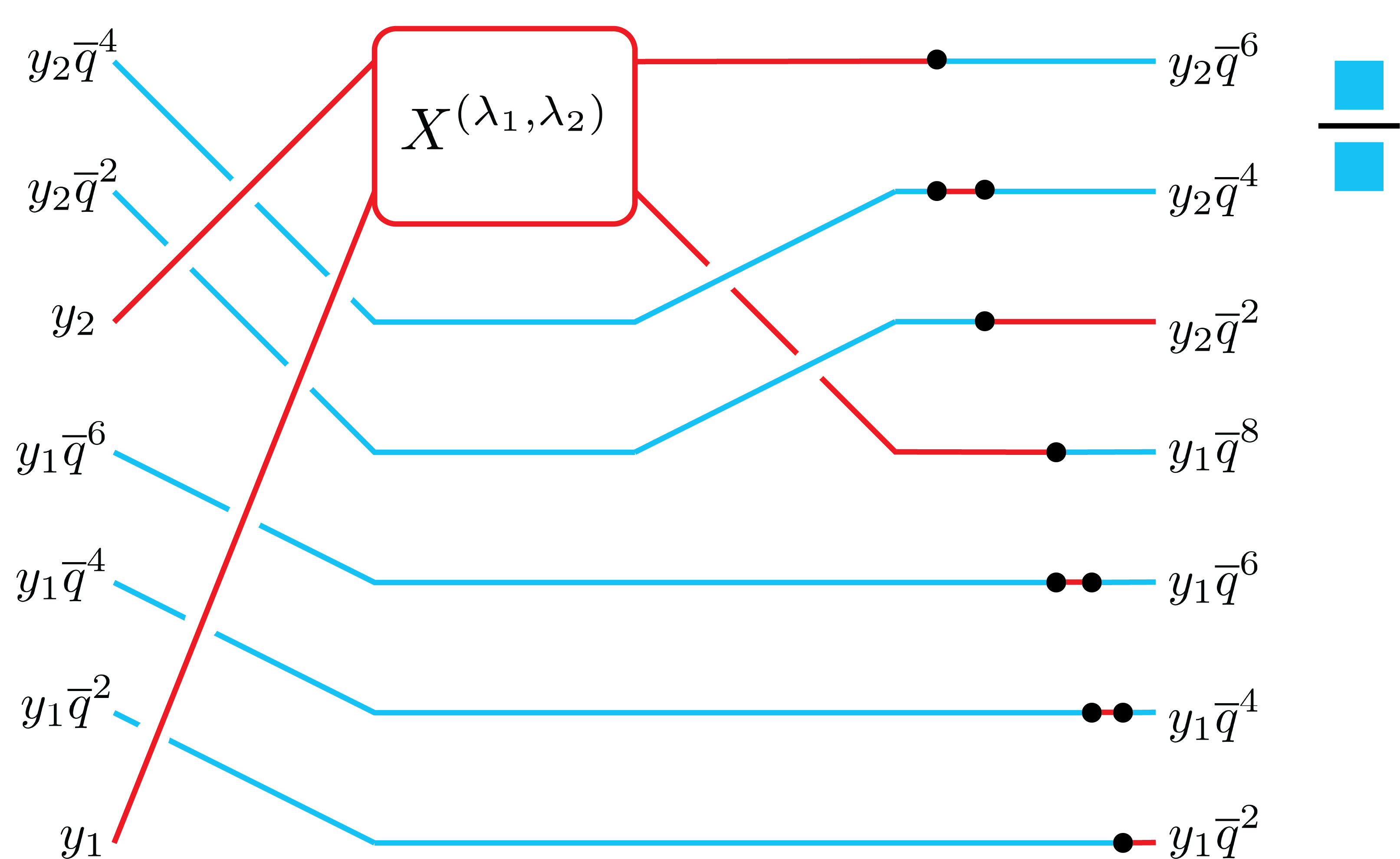}
   \caption{The residue conditions (for $u=2$, $\lambda_1 = 4$, $\lambda_2 = 3$)}
\end{figure}
	
\end{definition}

\noindent We define $\CA^-$ by analogy with $\CA^+$, but with $p^{-1} = q^{-n} \oq^{-1}$ instead of $\oq$, and with the diagonal matrix $D_1...D_k$ placed in front of the braid depicted in Figure 4. It was shown in \cite{Tale} that $\CA^\pm$ are subalgebras of the vector space \eqref{eqn:shuf new}, defined with respect to the product \eqref{eqn:shuf prod}. We will call either of them the \textbf{shuffle algebra}. \\

\subsection{} As in \eqref{eqn:notation 0}, we will write $E_{ij}^{(a)} := E^{(a)}_{\bari \barj} z_a^{\left \lfloor \frac {i-1}n \right \rfloor - \left \lfloor \frac {j-1}n \right \rfloor}$, and more generally:
\begin{equation}
\label{eqn:notation}
E_{i_1j_1} \otimes ... \otimes E_{i_kj_k} := E_{\bari_1 \barj_1} z_1^{\left \lfloor \frac {i_1-1}n \right \rfloor - \left \lfloor \frac {j_1-1}n \right \rfloor} \otimes ... \otimes E_{\bari_k \barj_k} z_k^{\left \lfloor \frac {i_k-1}n \right \rfloor - \left \lfloor \frac {j_k-1}n \right \rfloor}
\end{equation}
With this in mind, we grade the algebra $\CA^\pm$ by the monoid $\BZ^n \times \pm \{0,1,2,...\}$, via:
\begin{equation}
\label{eqn:deg}
\deg X^\pm = (\bd, \pm k)
\end{equation}
where we write $X^\pm$ for the element of $\CA^\pm$ represented by a matrix-valued rational function $X$, which is a linear combination of matrices of the form:
\begin{equation}
\label{eqn:summand}
f(z_1,...,z_k) E_{i_1j_1} \otimes ... \otimes E_{i_kj_k}
\end{equation}
with $(\text{hom deg } f) \cdot (1,...,1) - \sum_{a=1}^k [i_a;j_a) = \bd$. We define:
$$
\text{slope }X^\pm = \frac {|\bd|}{\pm k}
$$
where $|(d_1,...,d_n)| = d_1+...+d_n$. We will refer to $\bd$ and $\pm k$ of \eqref{eqn:deg} as the horizontal and vertical degrees of $X^\pm$, respectively, and write:
\begin{align*} 
&\hdeg X^\pm = \bd \\
&\vdeg X^\pm = \pm k
\end{align*}
and $\CA_{\pm k}$ to be the subspace of $\CA^\pm$ consisting of elements of vertical degree $\pm k$. \\

\subsection{} Consider the symmetrization operator:
\begin{equation}
\label{eqn:symmetrization aff}
\sym \ X = \sum_{\sigma \in S(k)} R_\sigma \cdot X_{\sigma(1)...\sigma(k)}(z_{\sigma(1)},...,z_{\sigma(k)}) \cdot R_\sigma^{-1} 
\end{equation}
where $R_\sigma$ is the product of $R_{ij} \left(\frac {z_i}{z_j} \right)$ associated to any braid lift of $\sigma$. For instance:
\begin{equation}
\label{eqn:big omega}
R_{\omega_k}(z_1,...,z_k) = \prod_{i=1}^{k-1} \prod_{j=i+1}^k R_{ij} \left( \frac {z_i}{z_j} \right) 
\end{equation}
lifts the longest element of $S(k)$. Consider the matrix-valued rational functions:
\begin{align*}
&\bQ^+(x) = - q \sum_{1 \leq i, j \leq n} \frac {(x\oq^2)^{\delta_{i \leq j}}}{1-x\oq^2} E_{ij} \otimes E_{ji}  \\
&\bQ^-(x) = - q \sum_{1 \leq i, j \leq n} \frac {(xp^{-2})^{\delta_{i \leq j}}}{1-xp^{-2}} E_{ij} \otimes E_{ji}  \cdot q^{2(j-i)}
\end{align*}
which have (up to scalar) the same simple pole and residue as $\tR^\pm(x)$. Define:
\begin{align*} 
&F_{ij}^{(k)} = \sym \left( R_{\omega_k} \prod_{a=1}^k \left[\prod_{b=1}^{a-2} \tR^+_{ba} \left( \frac {z_b}{z_a} \right) \bQ^+_{a-1,a} \left( \frac {z_{a-1}}{z_a} \right) E^{(a)}_{t_{a-1}t_a} \oq^{\frac {2\oo{t_a}}n} \right] \right) \in \CA^+ \\
&F_{ij}^{(-k)} = \frac 1{q^{2k}} \cdot \sym \left( R_{\omega_k} \prod_{a=1}^k \left[ \prod_{b=1}^{a-2} \tR^-_{ba} \left( \frac {z_b}{z_a} \right) \bQ^-_{a-1,a} \left( \frac {z_{a-1}}{z_a} \right) E^{(a)}_{t_{a-1}t_a} \oq^{-\frac {2\oo{t_{a-1}}}n} \right] \right) \in \CA^-
\end{align*}
\footnote{In the notation of \cite{Tale}, $F_{ij}^{(\pm k)} = \bF_{[j;i)}^{(\pm k)} \left(-p^ \frac 2n \right)^{\pm k}$} for all $(i,j) \in \zzz$ and $k \in \BN$, where:
$$
t_a = i + \left \lceil \frac {(j-i)a}k \right \rceil 
$$
for all $a \in \{1,...,k\}$. If $\mu = \frac {i-j}k \in \BQ$, we will write: 
$$
F_{ij}^{\pm \mu} = F_{ij}^{(\pm k)} 
$$
Note that $\deg F_{ij}^{(\pm k)} = (-[i;j), \pm k) \in \zz \times \BZ$. We set $F_{ij}^\mu = 0$ if $i-j \notin \mu \BZ$. \\\\

\subsection{} The elements $F_{ij}^{\mu}$ are connected to the following slope filtration on $\CA^\pm$. \\

\begin{definition}
\label{def:slope}
	
For any $\mu \in \BQ$, we define $\CA_{\leq \mu}^\pm \subset \CA^\pm$ to be the subset of matrix-valued rational functions $X = X_{1...k}(z_1,...,z_k)$ which have the property that: 
$$
X = \sum^{\text{various }X',X''}_{\text{with slope }X'' \leq \mu} X'_{1...i}(z_1,...,z_i) \otimes X''_{i+1...k}(z_{i+1},...,z_k) 
$$
when expanded as $z_1,...,z_i \ll z_{i+1},...,z_k$, for all $i \in \{0,...,k\}$ (recall \eqref{eqn:pseudo sweedler}). We further let $\CB_\mu^\pm \subset \CA_{\leq \mu}^\pm$ denote the subset of elements $X$ whose slope equals $\mu$. \\
	
\end{definition}

\noindent It was shown in \cite{Tale} that $\CA^\pm_{\leq \mu}$ and $\CB^\pm_{\mu}$ are subalgebras, and that:
$$
F_{ij}^{\pm \mu} \in \CB_{\mu}^\pm
$$
whenever $\frac {i-j}{\mu} \in \BN$. Moreover, if $\mu = \frac ab$ with $\gcd(a,b) = 1$, there is an isomorphism:
$$
\CB_\mu \cong U_q(\dot{\fgl}_{\frac ng})^{\otimes g}
$$
where $g = \gcd(n,a)$, under which the various $F_{ij}^{\pm \mu}$ are sent to the root generators of quantum groups (see Section 3 of \cite{Tale}). Linear bases of $\CA^\pm$ are given by:
\begin{equation}
\label{eqn:linear basis 1} 
\CA^\pm = \bigoplus_{\frac {i_1-j_1}{k_1} \geq ... \geq \frac {i_t-j_t}{k_t}} \fff \cdot F_{i_1j_1}^{(\pm k_1)} ... F_{i_tj_t}^{(\pm k_t)} 
\end{equation}
where the sum goes over all $(i_a,j_a) \in \zzz$ and $k_a \in \BN$. We define the completion:
\begin{equation}
\label{eqn:completion}
\widehat{\CA}^+ = \bigoplus_{k,d \in \BZ}  \mathop{\prod^{i_1-j_1+...+i_t-j_t = d}_{\frac {i_1-j_1}{k_1} \geq ... \geq \frac {i_t-j_t}{k_t}}}^{k_1+...+k_t = k} \fff \cdot F_{i_1j_1}^{(k_1)} ... F_{i_tj_t}^{(k_t)}
\end{equation}
In other words, an element of the completion $\widehat{\CA}^+$ is an infinite linear combination of the elements $F_{i_1j_1}^{(k_1)} ... F_{i_tj_t}^{(k_t)}$, but with $k_1+...+k_t$ and $i_1-j_1+...+i_t-j_t$ taking finitely many values. As in Proposition 3.7 of \cite{W surf}, one can show that this completion is closed under multiplication, and thus a well-defined algebra. \\ 

\subsection{} The slope filtration can be expressed in terms of the extended algebras:
\begin{align*}
&\tCA^+ = \CA^+ \otimes \uug \Big / \text{relations} \\
&\tCA^- = \CA^- \otimes \uul \Big / \text{relations}
\end{align*}
(the explicit relations above can be found in \cite{Tale}, although we will not need them). We recall the generating series \eqref{eqn:series s} and \eqref{eqn:series t} of elements of the algebras $\uug$ and $\uul$, respectively. We will henceforth use the notation:
\begin{align*}
&T^-(x) = T(x) \\ 
&S^-(x)  = T(x \oq^2)^{-1}
\end{align*}
for elements of $\tCA^-[[x]] \supset \uul[[x]]$, and:
\begin{align*}
&S^+(x) = S(x) \\
&T^+(x) = D \left[ \left( S (x p^2)^\dagger \right)^{-1} \right]^\dagger D^{-1}
\end{align*}
for elements of $\tCA^+[[x^{-1}]] \supset \uug[[x^{-1}]]$, where $D = \text{diag}(q^2,...,q^{2n})$ and $\dagger$ denotes transposition of matrices. This notation will allow us to define the following coproducts on the extended algebras $\tCA^\pm$:
\begin{equation}
\label{eqn:coproduct}
\Delta \left(X^\pm_{1...k}(z_1,...,z_k) \right) = \sum_{i=0}^k
\end{equation}
$$
(S_{k}^\pm(z_k)...S^\pm_{i+1}(z_{i+1}) \otimes 1) \left[ \frac {X_{1...i} \left( z_1,...,z_i \right) \otimes X_{i+1...k} \left(z_{i+1}, ..., z_k \right)}{\prod_{1\leq u \leq i < v \leq k} f \left( \frac {z_u}{z_v} \right)} \right]^\pm (T^\pm_{i+1}(z_{i+1}) ... T^\pm_{k}(z_k) \otimes 1) 
$$
\footnote{As explained in \cite{Tale}, the expression in square brackets is defined by taking the rational function: 
$$
\frac {X_{1...k}(z_1,...,z_k)}{\prod_{1\leq u \leq i < v \leq k} f \left( \frac {z_u}{z_v} \right)}
$$ separating the variables $z_1,...,z_i$ from $z_{i+1},...,z_k$ by expanding in the range $|z_1|,...,|z_i| \ll |z_{i+1}|,...,|z_k|$, and then separating the tensor $X_{1...k}$ into $X_{1...i} \otimes X_{i+1...k}$ according to \eqref{eqn:pseudo sweedler}.} By virtue of Definition \ref{def:slope}, we have: 
\begin{align} 
&X^\pm \in \CA^\pm_{\leq \mu} \quad \Rightarrow \quad \Delta(X^\pm) = \sum^{\text{various } X',X''}_{\text{slope } X'' \leq \mu} X' \otimes X'' \label{eqn:obs 1} \\
&X^\pm \in \CB^\pm_{\mu} \quad \Rightarrow \quad  \Delta(X^\pm) = \sum^{\text{various } X',X''}_{\text{slope } X'' \leq \mu \leq \text{slope } X'} X' \otimes X'' \label{eqn:obs 2}
\end{align}
The latter observation allows us to define a coproduct $\Delta_\mu$ on the subalgebra $\CB_\mu$ by taking the leading slope summands in the coproduct $\Delta$:
\begin{equation}
\label{eqn:copoduct mu}
\Delta_\mu \left(X^\pm\right) = \sum^{\text{those summands in \eqref{eqn:obs 2}}}_{\text{with }\text{slope } X'' = \mu = \text{slope } X'} X' \otimes X''
\end{equation}
For example, the coproduct of the elements $F_{ij}^\mu$ satisfy the following formula:
\begin{equation}
\label{eqn:coproduct f}
\Delta_\mu \left( F^\mu_{ij} \right) = \sum_{\bullet \in \{i,...,j\}} \frac {\psi_\bullet}{\psi_i} F^\mu_{\bullet j} \otimes F^\mu_{i \bullet}
\end{equation}
where $\psi_k \in \uug \subset \tCA^+$ and $\psi_k \in \uul \subset \tCA^-$ are defined in \eqref{eqn:psi}. \\
 
\subsection{} Finally, we recall a distinguished pairing between the algebras $\CA^\pm$. Before recalling its definition, we will need an important fact about these algebras, namely that they are generated by the elements \eqref{eqn:notation} in degree 1. \\

\begin{proposition}
\label{prop:generation}

(\cite{Tale}) The algebras $\CA^\pm$ are generated by $\left\{ F_{ij}^{(\pm 1)} \right\}_{(i,j) \in \zzz}$. \\

\end{proposition}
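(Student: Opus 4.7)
The plan is to reduce the statement, via the basis \eqref{eqn:linear basis 1}, to showing that every higher-degree generator $F^{(\pm k)}_{ij}$ with $k \geq 2$ already lies in the subalgebra generated by the degree-1 elements $\{F^{(\pm 1)}_{ab}\}_{(a,b) \in \zzz}$. I would proceed by a double induction: a primary induction on the vertical degree $k$, and for fixed $k$, a secondary induction on the slope of $F^{(\pm k)}_{ij}$ (equivalently, on $\pm(i-j)/k$). Both base cases are straightforward --- $k = 1$ is tautological, while the maximal-slope case in fixed $k$ is handled by the same argument as the inductive step but without any correction term to absorb.

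For the inductive step I would construct the ``staircase'' shuffle product of degree-1 letters
$$
\Phi^\pm_{ij,k} \;:=\; F^{(\pm 1)}_{t_0 t_1} * F^{(\pm 1)}_{t_1 t_2} * \cdots * F^{(\pm 1)}_{t_{k-1} t_k}, \qquad t_a = i + \left\lceil \tfrac{(j-i)a}{k} \right\rceil,
$$
with $t_0 = i$ and $t_k = j$. Unfolding the shuffle product formula \eqref{eqn:shuf prod} on the single-variable letters $E^{(a)}_{t_{a-1} t_a}$ and using the Yang--Baxter equation \eqref{eqn:ybe} to consolidate the resulting $R$-matrix crossings into the longest-word braid $R_{\omega_k}$ of \eqref{eqn:big omega} realizes $\Phi^\pm_{ij,k}$ as the same symmetrization that defines $F^{(\pm k)}_{ij}$, except that the ``diagonal'' factor $\bQ^\pm_{a-1,a}$ in the definition of $F^{(\pm k)}_{ij}$ is replaced by $\tR^\pm_{a-1,a}$. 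Because $\bQ^\pm$ and $\tR^\pm$ have identical simple pole and residue, their difference is regular at the relevant hyperplane; after symmetrization, the iterated wheel conditions of Definition \ref{def:shuf aff} force this regular correction to belong to $\CA^\pm_{\leq \mu'}$ for some slope $\mu'$ strictly below the slope of $F^{(\pm k)}_{ij}$ in the sense of Definition \ref{def:slope}. One thus obtains
$$
\Phi^\pm_{ij,k} \;=\; \gamma \cdot F^{(\pm k)}_{ij} \;+\; (\text{element of strictly smaller slope}), \qquad \gamma \in \fff^\times.
$$

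The correction expands in the basis \eqref{eqn:linear basis 1} as a sum of products $F^{(\pm k'_1)}_{i'_1 j'_1} \cdots F^{(\pm k'_s)}_{i'_s j'_s}$ with $\sum k'_u = k$ and average slope strictly smaller than $\pm(i-j)/k$; either $s \geq 2$, in which case each factor has $k'_u < k$ and is handled by the primary induction, or $s = 1$, in which case the unique factor is an $F^{(\pm k)}_{i'j'}$ of strictly smaller slope, handled by the secondary induction. Either way the correction can be written as a polynomial in the degree-1 generators, and the identity above then expresses $F^{(\pm k)}_{ij}$ itself as such a polynomial, closing the induction. The principal obstacle is the explicit braid-combinatorial matching underlying the key identity: one must carefully track the $R$-matrix crossings produced by iterating \eqref{eqn:shuf prod}, apply Yang--Baxter to align them with the longest-word braid in the definition of $F^{(\pm k)}_{ij}$, and verify via Definitions \ref{def:slope} and \ref{def:shuf aff} that the $\tR^\pm \leadsto \bQ^\pm$ discrepancy does genuinely strictly decrease the slope, rather than merely contributing a term of the same slope with a different $R$-matrix dressing.
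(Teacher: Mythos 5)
The paper states this Proposition without proof, citing \cite{Tale}, so there is no in-paper argument to compare against; I can only assess your proposal on its own terms. Your staircase-product strategy, shuffling together the degree-one generators $F^{(\pm 1)}_{t_{a-1}t_a}$ along the lattice path $t_a = i + \lceil (j-i)a/k\rceil$ that appears in the very definition of $F^{(\pm k)}_{ij}$, is indeed the natural route, and the observation that the only discrepancy after applying Yang--Baxter is the substitution $\bQ^\pm \leadsto \tR^\pm$ in the nearest-neighbour crossings is a reasonable one.

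The argument as written, however, has a genuine gap centered on the role of slope. You claim that the regular correction coming from $\tR^\pm - \bQ^\pm$ lands in $\CA^\pm_{\leq \mu'}$ for some $\mu'$ \emph{strictly below} $\mu = \frac{i-j}{k}$, invoking the residue conditions of Definition \ref{def:shuf aff}. This cannot be right: the shuffle product is graded, so $\Phi^\pm_{ij,k}$ and $F^{(\pm k)}_{ij}$ have identical bidegree $(-[i;j),\pm k)$, and so does their difference. Its total slope $|\bd|/(\pm k)$ is therefore $\mu$, not something smaller; moreover in the basis \eqref{eqn:linear basis 1} the factors of any length-$t\geq 2$ term appear in \emph{decreasing} slope order with average $\mu$, so the leading slope of such a term is $\geq \mu$. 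The residue (``wheel'') conditions of Definition \ref{def:shuf aff} characterize membership in $\CA^\pm$ but say nothing about the filtration $\CA^\pm_{\leq\mu}$ of Definition \ref{def:slope}, which is a statement about coproduct behaviour, not about pole structure. Relatedly, the secondary induction on $\pm(i-j)/k$ is not well-founded: the integer $i-j$ is unbounded on $\zzz$, so there is no ``maximal-slope'' base case.

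The good news is that you do not need any slope induction at all, and you were close to seeing this. Once the correction $\Phi^\pm_{ij,k} - \gamma F^{(\pm k)}_{ij}$ is expanded in the PBW basis \eqref{eqn:linear basis 1}, the constraints $\sum_a [i_a;j_a)=[i;j)$ and $\sum_a k_a = k$ imposed by degree force any length-one term to be $F^{(\pm k)}_{ij}$ itself (since $[i;j)$ recovers $(i,j)\in\zzz$), so every other term has $t\geq 2$ and hence all factors with strictly smaller vertical degree $k_a < k$. A single induction on $k$ then closes the argument. What still requires a genuine computation is the nonvanishing of the coefficient $\gamma$ of $F^{(\pm k)}_{ij}$ in the expansion: your regularity remark shows that $\tR^\pm-\bQ^\pm$ produces no extra pole, but that alone does not identify the leading coefficient. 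To pin it down you should either compute the iterated residue prescribed by Definition \ref{def:shuf aff} directly (showing that only the $\bQ^\pm$ term survives when taking the residue at $z_a = z_{a-1}\oq^2$) or pair $\Phi^\pm_{ij,k}$ against a suitable $F^{(\mp k)}_{j i}$ and use Propositions \ref{prop:pair f} and \ref{prop:pair aff} to read off a nonzero value.
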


\noindent In other words, any element of $\CA^\pm$ can be written as a linear combination of:
\begin{align}
&I_1^+ * ... * I^+_k \in \CA^+ \label{eqn:linear 1} \\ 
&J_1^- * ... * J^-_k \in \CA^- \label{eqn:linear 2}
\end{align}
respectively, as $I_1,...,I_k$ (respectively $J_1,...,J_k$) range over $\End(V)[z_1^{\pm 1}]$. \\

\begin{proposition}
\label{prop:pairing}

(\cite{Tale}) There is a non-degenerate pairing: 
\begin{equation}
\label{eqn:pairing a}
\tCA^+ \otimes \tCA^- \rightarrow \fff
\end{equation}
which satisfies the following formulas for all $I_a, J_a \in \eEnd(V)[z^{\pm 1}]$, $X^\pm \in \CA^\pm$:
\begin{equation}
\label{eqn:symm pair 1} 
\Big \langle I^+_1 * ... * I^+_k, X^-_{1...k}(z_1,...,z_k) \Big \rangle = (q^2-1)^k \int_{|z_1| \ll ... \ll |z_k|} 
\end{equation}
$$
\emph{Tr}_{V^{\otimes k}} \left( R_{\omega_k} \prod_{a=1}^k \left[ I_a^{(a)}(z_a) \prod_{b=a+1}^k \tR^+_{ab} \left(\frac {z_a}{z_b} \right) \right] \frac {X_{1...k}(z_1,...,z_k)}{\prod_{1 \leq i < j \leq k} f\left(\frac {z_i}{z_j} \right)} \right) 
$$
\begin{equation}
\label{eqn:symm pair 2} 
\Big \langle X^+_{1...k}(z_1,...,z_k), J^-_1 * ... * J^-_k \Big \rangle = (q^2-1)^k \int_{|z_1| \ll ... \ll |z_k|} 
\end{equation}  
$$
\emph{Tr}_{V^{\otimes k}} \left( R_{\omega_k} \prod_{a=1}^k \left[ J_a^{(a)}(z_a) \prod_{b=a+1}^k \tR^-_{ab} \left(\frac {z_a}{z_b} \right) \right] \frac {X_{1...k}(z_1,...,z_k)}{\prod_{1 \leq i < j \leq k} f\left(\frac {z_i}{z_j} \right)} \right)
$$
The pairing of elements of non-opposite vertical degrees is defined to be 0. \footnote{Above, the notation: 
$$
\int_{|z_1| \ll ... \ll |z_k|} F(z_1,...,z_k)
$$ 
refers to the iterated residue of $F$ at $0$: first in $z_1$, then in $z_2$,..., finally in $z_k$. For brevity, we do not write the differential $\prod_{a=1}^k \frac {dz_a}{2\pi i z_a}$ at the end of all our integrands, though it is always implied.} Moreover, \eqref{eqn:pairing a} is a bialgebra pairing, in that the dual of the product is the coproduct:
\begin{align}
&\langle ab, c \rangle = \langle b \otimes a, \Delta(c) \rangle, \quad \forall a, b \in \tCA^+, c \in \tCA^- \label{eqn:bialg 1} \\
&\langle a, bc \rangle = \langle \Delta (a), b \otimes c \rangle, \qquad \forall a \in \tCA^+, b, c \in \tCA^- \label{eqn:bialg 2}
\end{align}
	
\end{proposition}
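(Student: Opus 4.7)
The plan is to first construct the restricted pairing $\CA^+ \otimes \CA^- \to \fff$ by reducing everything to pairings of products of degree-one generators, then extend to the full $\tCA^\pm$ using the standard RTT pairing on the quantum group factors, and finally verify non-degeneracy via the slope decomposition. By Proposition \ref{prop:generation}, every element of $\CA^+$ can be written as a linear combination of shuffle products $I_1^+ \ast \cdots \ast I_k^+$, so formula \eqref{eqn:symm pair 1} determines $\langle -, -\rangle$ on all of $\CA^+ \otimes \CA^-$ (once well-definedness is established), and dually for \eqref{eqn:symm pair 2}. The first thing I would verify is that \eqref{eqn:symm pair 1} does not depend on the particular presentation of an element as a sum of products of degree-one generators: using the symmetry identity \eqref{eqn:symmetry identity} for $X^-$, together with the Yang--Baxter equation applied to the braid $R_{\omega_k}$, one checks that swapping $I_a^{(a)}(z_a)$ with $I_{a+1}^{(a+1)}(z_{a+1})$ inside the trace corresponds precisely to the effect of the shuffle product \eqref{eqn:shuf prod} on the generators, so that the relations defining $\CA^+$ (as a quotient of the free product of degree-one pieces under the shuffle product) are automatically satisfied. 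One must also check that \eqref{eqn:symm pair 1} and \eqref{eqn:symm pair 2} agree when both sides are products of degree-one generators; this is a direct residue computation, using that $\tR^+$ and $\tR^-$ differ only by the $D$-conjugation and the shift $\oq^2 \leftrightarrow p^{-2}$, which is absorbed by appropriate cyclicity of the trace.

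The main step is the bialgebra compatibility \eqref{eqn:bialg 1}--\eqref{eqn:bialg 2}. To establish \eqref{eqn:bialg 1}, I would take $a = I_1^+ \ast \cdots \ast I_i^+$ and $b = I_{i+1}^+ \ast \cdots \ast I_k^+$, so that $ab = I_1^+ \ast \cdots \ast I_k^+$ by associativity, and expand $\langle ab, c\rangle$ using \eqref{eqn:symm pair 1}; on the other side, $\langle b \otimes a, \Delta(c) \rangle$ is computed by plugging the coproduct formula \eqref{eqn:coproduct} for $c = X^-_{1\dots k}(z_1, \dots, z_k)$ into the product of two traces. The division of the variables $z_1, \dots, z_k$ into the groups $\{z_1, \dots, z_i\}$ and $\{z_{i+1}, \dots, z_k\}$ matches the expansion $|z_1|, \dots, |z_i| \ll |z_{i+1}|, \dots, |z_k|$ built into \eqref{eqn:coproduct}, and the $\prod_{u \leq i < v} f(z_u/z_v)^{-1}$ denominator accounts for the crossings between the two packets in the braid $R_{\omega_k}$. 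The appearance of the series $S^\pm_{i+1}(z_{i+1}) \cdots S^\pm_k(z_k)$ and $T^\pm_{i+1}(z_{i+1}) \cdots T^\pm_k(z_k)$ in the coproduct is precisely what is needed to cancel the mixed $R$-matrix crossings in \eqref{eqn:shuf prod} after pairing against generators of $\uul$, $\uug$. Equation \eqref{eqn:bialg 2} then follows by exchanging the roles of $+$ and $-$, using the symmetric formula \eqref{eqn:symm pair 2}.

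To extend the pairing to $\tCA^\pm$, I would use the fact that the RTT presentation \eqref{eqn:rtt 1}--\eqref{eqn:rtt 3} of $\uu$ gives a standard bialgebra pairing between $\uug$ and $\uul$, characterized by $\langle S_1(x), T_2(y)\rangle$ being essentially an $R$-matrix; the compatibility with \eqref{eqn:coproduct} amounts to the standard fact that the Drinfeld pairing is dual to the RTT coproduct. For non-degeneracy, I would exploit the slope decomposition and the PBW basis \eqref{eqn:linear basis 1}: observations \eqref{eqn:obs 1}--\eqref{eqn:obs 2} imply that the pairing respects the slope filtration, so its associated graded is block-diagonal with blocks indexed by multisets of slopes. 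Each block reduces by \eqref{eqn:coproduct f} and the isomorphism $\CB_\mu \cong U_q(\dot{\fgl}_{n/g})^{\otimes g}$ to a tensor product of the classical Drinfeld pairings on quantum affine groups, which are known to be non-degenerate. Consequently, the pairing on each graded piece is non-degenerate, hence so is the pairing on $\tCA^\pm$.

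The hardest step is the bialgebra compatibility in the second paragraph: matching the $R$-matrix gymnastics of the shuffle product \eqref{eqn:shuf prod} with the coproduct formula \eqref{eqn:coproduct}, including tracking how the $S^\pm(z)$ and $T^\pm(z)$ prefactors interact with the $\tR^\pm$-factors appearing in both \eqref{eqn:symm pair 1} and \eqref{eqn:symm pair 2} under the expansion $|z_1| \ll \cdots \ll |z_k|$. In practice this is a lengthy Yang--Baxter manipulation, and the cleanest route is to do it first for $k=2$ and then proceed by induction on the number of factors, at each stage using associativity of the shuffle product together with coassociativity of $\Delta$ to reduce to the two-factor case.
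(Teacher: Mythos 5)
The paper does not prove this Proposition at all: it is explicitly attributed to \cite{Tale} (the citation is part of the statement), and the surrounding text never revisits it. So there is no internal proof to compare your sketch against, and the most I can do is evaluate your plan on its own terms and against what the reference presumably does.

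Your overall architecture is plausible and likely tracks the reference: reduce to degree-one pieces via Proposition \ref{prop:generation}, establish the bialgebra axioms by matching the shuffle product \eqref{eqn:shuf prod} against the coproduct \eqref{eqn:coproduct} with the $S^\pm$, $T^\pm$ prefactors absorbing the mixed crossings, and argue non-degeneracy through the slope filtration, Proposition \ref{prop:pair 1}, and the identification of each $\CB_\mu$ with a tensor product of quantum affine groups (which the paper itself records, so formula \eqref{eqn:one} is effectively the ``diagonal'' of the pairing against the PBW basis \eqref{eqn:linear basis 1}). The extension to the extended algebras $\tCA^\pm$ via the RTT pairing on the $\uug$, $\uul$ factors is also the standard move.

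The one place where your sketch is not merely unfinished but risks being logically circular is the well-definedness step at the start. You frame $\CA^+$ as ``a quotient of the free product of degree-one pieces under the shuffle product,'' which is not how it is defined: $\CA^+$ is a \emph{subalgebra} of \eqref{eqn:shuf new} cut out by the residue conditions of Definition \ref{def:shuf aff}, and Proposition \ref{prop:generation} only tells you that the multiplication map from tensors of degree-one pieces is \emph{surjective} onto it. To use \eqref{eqn:symm pair 1} as the \emph{definition} of the pairing, you would have to show that the integral on the right depends only on the shuffle product $I_1^+ \ast \cdots \ast I_k^+$ and not on the chosen factorization, i.e. that the formula kills the entire kernel of the multiplication map. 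This kernel is not a priori generated by the adjacent-swap relations you invoke, so Yang--Baxter plus symmetry of $X^-$ is not obviously enough. The cleaner route, and almost certainly the one taken in \cite{Tale}, is to define the pairing by a closed formula on all of $\CA^+ \otimes \CA^-$ directly (a residue/trace expression applied to the full rational function $X_{1\dots k}$), verify that it is a bialgebra pairing, and \emph{derive} \eqref{eqn:symm pair 1}--\eqref{eqn:symm pair 2} as consequences once one specializes to products of degree-one generators. I would restructure the first paragraph of your argument along these lines; the rest can stand as a sketch, though as you note, the bialgebra compatibility check is a substantial Yang--Baxter computation that must actually be carried out rather than gestured at.
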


\noindent When we restrict the pairing \eqref{eqn:pairing a} to the subalgebras $\CB_\mu$, it is also true that the dual of the product is the coproduct $\Delta_\mu$, because the pairing of any two elements is trivial unless they have opposite degrees. \\

\begin{proposition}
\label{prop:pair 1}

If $x^+ = \prod^\leftarrow_{\mu \in \BQ} x^+_{\mu}$, $x^- = \prod^\rightarrow_{\mu \in \BQ} x^-_{\mu}$ with $x^\pm_{\mu} \in \CB^\pm_\mu$ (the product labeled by $\leftarrow$ is taken in decreasing order of $\mu$, and the one labeled by $\rightarrow$ is taken in increasing order of $\mu$, and all but finitely many of the $x^\pm_\mu$'s are equal to 1), then:
\begin{equation}
\label{eqn:pairing multiplicative}
\langle x^+, x^- \rangle = \prod_{\mu \in \BQ} \langle x^+_\mu, x^-_\mu \rangle 
\end{equation}

\end{proposition}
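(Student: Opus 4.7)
The plan is to combine a degree-matching observation with an inductive peeling using the bialgebra identities \eqref{eqn:bialg 1}--\eqref{eqn:bialg 2} and the slope triangularity of the coproduct from \eqref{eqn:obs 1}--\eqref{eqn:obs 2}.

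First, I would establish the cross-slope orthogonality $\langle \CB^+_{\mu^+}, \CB^-_{\mu^-}\rangle = 0$ whenever $\mu^+\neq\mu^-$. Indeed, the pairing vanishes unless horizontal and vertical degrees are opposite, and under that matching constraint $\mu^+ = |\hdeg\, y^+|/\vdeg\, y^+$ automatically equals $|\hdeg\, y^-|/(-\vdeg\, y^-) = \mu^-$. Combined with the subalgebra property of $\CA^\pm_{\leq\mu}$ mentioned after Definition \ref{def:slope}, this upgrades to the stronger claim $\langle\CA^+_{\leq\mu^+},\CB^-_{\mu^-}\rangle = 0$ whenever $\mu^- > \mu^+$, and symmetrically for $\langle\CB^+_{\mu^+},\CA^-_{\leq\mu^-}\rangle$ when $\mu^+ > \mu^-$, by writing elements of $\CA^\pm_{\leq\mu}$ as products of $\CB^\pm_{\mu'}$ with $\mu' \leq \mu$ and applying the single-slope orthogonality through \eqref{eqn:bialg 1}--\eqref{eqn:bialg 2} inductively.

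The main argument is then an induction on $k+l$. Apply \eqref{eqn:bialg 2} to peel off the largest slope $\nu_l$ on the minus side:
$$
\langle x^+, x^-_{\nu_1}\cdots x^-_{\nu_l}\rangle = \bigl\langle \Delta(x^+),\, (x^-_{\nu_1}\cdots x^-_{\nu_{l-1}})\otimes x^-_{\nu_l}\bigr\rangle.
$$
Factoring $\Delta(x^+) = \prod_i \Delta(x^+_{\mu_i}) = \sum (\alpha_1\cdots\alpha_k)\otimes(\beta_1\cdots\beta_k)$ with $\alpha_i$ of slope $\geq\mu_i$ and $\beta_i$ of slope $\leq\mu_i$, the second tensor factor lies in $\CA^+_{\leq\mu_1}$ because that subspace is a subalgebra. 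Pairing with $x^-_{\nu_l}$ and applying the extended orthogonality forces $\mu_1\geq\nu_l$; the symmetric argument via \eqref{eqn:bialg 1} on the plus side forces $\nu_l\geq\mu_1$, hence $\mu_1=\nu_l$. Among the surviving summands, the contributions of $\beta_2,\ldots,\beta_k$ (all of strictly smaller slope) do not reach the $\mu_1$-projection that pairs nontrivially against $x^-_{\nu_l}$, while $\Delta(x^+_{\mu_1})$ contributes only its $\Delta_{\mu_1}$-part. Using that $\Delta_{\mu_1}$ is dual to multiplication on $\CB_{\mu_1}$ (as noted right after Proposition \ref{prop:pairing}), the right-hand side simplifies to
$$
\langle x^+_{\mu_2}\cdots x^+_{\mu_k},\, x^-_{\nu_1}\cdots x^-_{\nu_{l-1}}\rangle \cdot \langle x^+_{\mu_1}, x^-_{\nu_l}\rangle,
$$
at which point the inductive hypothesis closes the argument.

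The main obstacle is making rigorous the ``extraction of the leading piece'' invoked above: precisely, that among all summands in $\sum (\alpha_1\cdots\alpha_k)\otimes(\beta_1\cdots\beta_k)$ only those in which the second tensor factor projects to the $\mu_1$-slope component through $\Delta_{\mu_1}(x^+_{\mu_1})$ survive, while the $\beta_i$'s with $\mu_i<\mu_1$ cannot enrich that projection. Establishing this requires tracking how the slope filtrations on the two tensor factors interact through the product $\prod_i\Delta(x^+_{\mu_i})$, exploiting both the subalgebra property of $\CA^+_{\leq\mu}$ and the fact that the top-slope quotient of that subalgebra is $\CB^+_\mu$. Once carried out, this reduces everything to the single-slope orthogonality, completing the induction.
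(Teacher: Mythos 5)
Your proposal is structurally the same as the paper's proof: peel off the extremal-slope factor by applying one of the bialgebra identities \eqref{eqn:bialg 1}--\eqref{eqn:bialg 2} and exploit the slope-triangularity of the coproduct from \eqref{eqn:obs 1}--\eqref{eqn:obs 2} to kill all cross terms, then induct. (The paper applies $\Delta$ to the minus side and pairs against the split plus side; you apply it to the plus side; these are dual and interchangeable.) The step you flag as the ``main obstacle'' — showing that only the leading piece of the coproduct survives — is handled in the paper by pinning down the top two terms of $\Delta(x_\alpha^-)$ explicitly as in \eqref{eqn:Delta}, observing that all other summands have second tensor factor of slope $<\alpha$ or of strictly smaller $|\vdeg|$, and choosing WLOG $|\vdeg\, x_\alpha^+| \geq |\vdeg\, x_\alpha^-|$ so that degree matching rules out everything except the leading term; adding that explicit degree bookkeeping would close your sketch.
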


\begin{proof} We will prove the required statement by induction over the total number of elements among the $x_\mu^\pm$'s which are not equal to 1. Let us assume $\mu = \alpha$ is the greatest index for which one of the $x_\mu^\pm$'s is not equal to 1, and let us assume without loss of generality that $|\vdeg x_\alpha^+| \geq |\vdeg x_\alpha^-|$. Then \eqref{eqn:bialg 1} implies that:
$$
\langle x^+, x^- \rangle = \left \langle  x_\alpha^+ \prod^\leftarrow_{\mu < \alpha} x_\mu^+, \prod^\rightarrow_{\mu \leq \alpha} x_\mu^- \right \rangle = \left \langle  \prod^\leftarrow_{\mu < \alpha} x_\mu^+ \otimes x_\alpha^+, \prod^\rightarrow_{\mu \leq \alpha} \Delta(x_\mu^-) \right \rangle
$$
The fact that $x_\mu^- \in \CA_{\leq \mu}^- \subset \CA_{\leq \alpha}^-$ implies that the second tensor factors of all $\Delta(x_\mu^-)$ have slope $\leq \alpha$, and equality only holds for $x_\alpha^-$ itself. We have:
\begin{equation}
\label{eqn:Delta}
\Delta(x_\alpha^-) = (\text{product of }\psi_k^{\pm 1}\text{'s}) \otimes x_\alpha^- + x_\alpha^- \otimes 1 + ...
\end{equation}
where all the summands marked by the ellipsis either have second tensor factor of slope $<\alpha$, or slope $=\alpha$ but $|\vdeg|$ strictly smaller than that of $x_\alpha^-$. Hence the only summand which pairs non-trivially with $x_\alpha^+$ is the first one of \eqref{eqn:Delta}, so we have:
$$
\langle x^+, x^- \rangle = \left \langle \prod^\leftarrow_{\mu < \alpha} x_\mu^+, \prod^\rightarrow_{\mu < \alpha} x_\mu^- \right \rangle \langle x^+_\alpha, x^-_\alpha \rangle
$$
Repeating this argument finitely many times implies \eqref{eqn:pairing multiplicative}.
	
\end{proof}

\subsection{} For any $X^- \in \CA_{-k}$, recall $X^{(k)}(y) \in \End(V)[y^{\pm 1}]$ from Definition \ref{def:shuf aff}. \\

\begin{proposition}
\label{prop:pair f}

(\cite{Tale}) For any $(i,j) \in \zzz$ and $k \in \BN$, we have:
$$
\left \langle F_{ij}^{(k)}, X^- \right \rangle = \text{coefficient of } E_{\barj \bari} y^{\left \lfloor \frac {j-1}n \right \rfloor - \left \lfloor \frac {i-1}n \right \rfloor} \text{ in}
$$
$$ 
(q^2-1)^k p^{\frac {k(j-i)+k-(j-i)-\gcd(j-i,k)+2\bari(k-1) + 2\barj}n} X^{(k)}(y)
$$
for all $X \in \CB_{\mu}^-$. We recall that $p = q^n \oq$. \\

\end{proposition}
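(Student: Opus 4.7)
The plan is to apply the integral pairing formula \eqref{eqn:symm pair 1} of Proposition \ref{prop:pairing}. The key preliminary step is to recognize the defining symmetrization of $F_{ij}^{(k)}$ as an iterated shuffle product of degree-1 generators: by Proposition \ref{prop:generation} such a decomposition exists, and by directly comparing \eqref{eqn:shuf prod} with the explicit symmetrization in the definition of $F_{ij}^{(k)}$, one sees that (up to an overall scalar)
$$F_{ij}^{(k)} = \left(\oq^{2\bar{t}_1/n} E_{t_0 t_1}\right) * \cdots * \left(\oq^{2\bar{t}_k/n} E_{t_{k-1} t_k}\right),$$
with each $\bQ^+_{a-1,a}$ in the definition standing in for the would-be $\tR^+_{a-1,a}$ at the ``consecutive'' positions. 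The discrepancy between $\bQ^+$ and $\tR^+$ is a scalar that will eventually be absorbed into the power of $p$.

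Plugging this decomposition into \eqref{eqn:symm pair 1}, the pairing $\langle F_{ij}^{(k)}, X^- \rangle$ becomes $(q^2-1)^k$ times an iterated residue at $z_1=0, \ldots, z_k=0$ of a trace over $V^{\otimes k}$ of $R_{\omega_k}$ against $\prod_a \oq^{2\bar{t}_a/n} E^{(a)}_{t_{a-1}t_a}$, $\prod_{a<b} \tR^+_{ab}(z_a/z_b)$, and $X_{1\ldots k}(z_1,\ldots,z_k)/\prod_{i<j} f(z_i/z_j)$. By Proposition \ref{prop:pair 1} the pairing vanishes unless $X \in \CB_\mu^-$ with $\mu = (i-j)/k$, in which case the residue conditions from Definition \ref{def:shuf aff} (applied with $u=1$, $\lambda_1=k$) force the iterated residue to localize on the single locus $z_a = y\, p^{-2(a-1)}$. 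On that locus the tensor $X_{1\ldots k}$ collapses to $X^{(k)}(y)$ times the blue-over-blue factor $\prod_{1 \leq i < i' < k} f(p^{-2(i-i')})$, which cancels exactly against the $\prod_{i<j} f(z_i/z_j)$ in the denominator.

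After the substitution $z_a = y\, p^{-2(a-1)}$, the remaining task is to evaluate the trace of $R_{\omega_k}$ against $\prod_a E^{(a)}_{t_{a-1}t_a}$ and the numerical values $\tR^+_{ab}(p^{2(b-a)})$. The operator $R_{\omega_k}$ reverses the tensor-factor ordering, so the chain $E_{t_0 t_1}^{(1)} \otimes \cdots \otimes E_{t_{k-1}t_k}^{(k)}$ contracted with it projects the trace onto the matrix entry indexed by $(\bar{t}_k, \bar{t}_0) = (\bar{j}, \bar{i})$ of $X^{(k)}(y)$; the corresponding power of $y$ telescopes to $\lfloor (j-1)/n \rfloor - \lfloor (i-1)/n \rfloor$ once one substitutes into the convention \eqref{eqn:notation}.

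The main obstacle is verifying that the complicated exponent of $p$ stated in the proposition is the correct aggregate of (i) the prefactors $\oq^{2\bar{t}_a/n}$ in the generators, (ii) the evaluations of $\tR^+$ and $f$ at the residue locus $z_a/z_b = p^{2(b-a)}$, (iii) the blue-over-blue normalization, and (iv) the scalar discrepancy between $\bQ^+$ and $\tR^+$. I would therefore first prove the case $k=1$ by direct computation and then induct on $k$; the appearance of $\gcd(j-i,k)$ in the exponent will reflect that the intermediate labels $t_a = i + \lceil (j-i)a/k \rceil$ visit each residue class modulo $n$ with multiplicity governed by $\gcd(j-i,k)$, so that groups of $\oq$-powers coalesce into the single power of $p = q^n \oq$ recorded in the statement.
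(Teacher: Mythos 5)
The paper does not prove Proposition \ref{prop:pair f}; it is cited from \cite{Tale} (see also the reference to Proposition~5.21 of that paper just below formula \eqref{eqn:syn}). So there is no ``paper's own proof'' to compare against, and your proposal must be judged on its own merits.

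The central gap is the unjustified claim that, up to an overall scalar,
$$F_{ij}^{(k)} = \bigl(\oq^{2\bar{t}_1/n}E_{t_0t_1}\bigr) * \cdots * \bigl(\oq^{2\bar{t}_k/n}E_{t_{k-1}t_k}\bigr).$$
The definition of $F_{ij}^{(k)}$ places $\bQ^+_{a-1,a}$ at each consecutive pair $(a-1,a)$ and $\tR^+_{ba}$ only at non-consecutive pairs $b\le a-2$, whereas the $k$-fold iteration of the shuffle product \eqref{eqn:shuf prod} on degree-$1$ generators inserts $\tR^+_{ab}$ at every pair. You acknowledge the discrepancy but dismiss it as ``a scalar.'' It is not: from the explicit formulas,
$$\bQ^+(x) = q\sum_{i,j}\frac{u^{\delta_{i>j}}}{1-u}E_{ij}\otimes E_{ji}\quad\text{with }u=\tfrac{1}{x\oq^2},$$
while $\tR^+(x)=R_{21}(u)$ contains an additional diagonal block $\sum_{i,j}E_{ii}\otimes E_{jj}\bigl(\tfrac{q-uq^{-1}}{1-u}\bigr)^{\delta_i^j}$ and a prefactor $(q-q^{-1})$ on the off-diagonal block; the two operators are \emph{not} proportional. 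Since $\CA^+$ is a \emph{subset} of $\bigoplus_k\End(V^{\otimes k})(z_1,\dots,z_k)^{\mathrm{Sym}}$ (Definition~\ref{def:shuf aff}) rather than a quotient, two different rational functions are genuinely different elements, and by non-degeneracy of the pairing \eqref{eqn:pairing a} they pair differently with some $X^-$. Already for $k=2$ a direct comparison of $\sym\bigl(R_{12}E^{(1)}_{t_0t_1}\bQ^+_{12}E^{(2)}_{t_1t_2}\bigr)$ with $E_{t_0t_1}*E_{t_1t_2}$ reveals different rational functions. Consequently, substituting your proposed decomposition into \eqref{eqn:symm pair 1} does not compute $\langle F_{ij}^{(k)}, X^-\rangle$, and the rest of the argument (localization of the iterated residue, cancellation against the blue-over-blue factor, bookkeeping of the $p$-exponent via induction) is built on this false premise. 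What $\bQ^+$ and $\tR^+$ do share is the \emph{residue} at the pole $x\oq^2=1$ (up to scalar), and it is plausible that a correct proof uses that fact by carefully tracking which residues of the pairing integral contribute; but a shuffle-product decomposition of $F_{ij}^{(k)}$ in the sense you wrote is not available, and Proposition \ref{prop:generation} only gives the existence of \emph{some} linear combination of products of degree-$1$ generators, not this one.
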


\noindent If we apply Proposition \ref{prop:pair f} for $X^- = F_{j'i'}^{(-k')}$, we obtain:
\begin{equation}
\label{eqn:one}
\left \langle F_{ij}^{(k)}, F_{j'i'}^{(-k')} \right \rangle = \delta_{(i',j')}^{(i,j)} \delta_{k'}^k (1-q^{-2}) 
\end{equation}
The reason for this is the fact that:
\begin{equation}
\label{eqn:syn}
\left(F_{j'i'}^{(-k')} \right)^{(k')}(y) = E_{\barj'\bari'} y^{\left \lfloor \frac {j'-1}n \right \rfloor - \left \lfloor \frac {i'-1}n \right \rfloor}  
\end{equation}
$$
q^{-2k'}(1-q^{-2})^{-k'+1} p^{\frac {-k'(j'-i')-k'+(j'-i')+\gcd(i'-j',k')-2(k-1)\bari'-2\barj'}n}
$$
as proved in Proposition 5.21 of \cite{Tale}. \\

\begin{proposition}
\label{prop:pair aff}

For any $\mu \in \BQ$ and indices $i,j, a_1,...,a_t,b_1,...,b_t \in \BZ$ such that $i-j, a_1-b_1,...,a_t-b_t \in \mu \BN$, we have:
\begin{equation}
\label{eqn:pair f}
\Big \langle F^\mu_{ij}, F^\mu_{a_1b_1} ... F^\mu_{a_tb_t} \Big \rangle = (1-q^{-2})^t
\end{equation}
if $b_1 \equiv a_2$, $b_2 \equiv a_3$, ..., $b_{t-1} \equiv a_t$, $\sum_{k=1}^t [a_k;b_k) = [j;i)$ and 0 otherwise. \\
	
\end{proposition}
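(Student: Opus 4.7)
\emph{Proof plan.} I would argue by induction on $t$, converting the product on the right into an iterated coproduct on the left via the bialgebra property \eqref{eqn:bialg 2}, and then invoking the leading-slope coproduct formula \eqref{eqn:coproduct f} to reduce to the base case. For the base case $t=1$, the condition $[a_1;b_1) = [j;i)$ in $\zzz$ encodes precisely the residue equalities $\bara_1 = \barj$, $\barb_1 = \bari$ and the vertical-degree equality $k = (i-j)/\mu = (a_1-b_1)/\mu$ needed to make the Kronecker delta in equation \eqref{eqn:one} non-zero, with value $(1-q^{-2})$.

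For the inductive step, I will set $Y = F^\mu_{a_2 b_2}\cdots F^\mu_{a_t b_t}$; since $\CB^-_\mu \subset \CA^-$ is a subalgebra, both $F^\mu_{a_1 b_1}$ and $Y$ lie in $\CB^-_\mu$. Applying \eqref{eqn:bialg 2} gives
\[
\langle F^\mu_{ij},\, F^\mu_{a_1 b_1} \cdot Y\rangle \;=\; \langle \Delta(F^\mu_{ij}),\, F^\mu_{a_1 b_1} \otimes Y\rangle.
\]
Observation \eqref{eqn:obs 2} bounds the slopes in the two tensor factors of $\Delta(F^\mu_{ij})$, and the slope-orthogonality of the pairing (the mechanism underlying Proposition \ref{prop:pair 1}) forces both factors to have slope exactly $\mu$, so only the leading-slope component $\Delta_\mu(F^\mu_{ij}) = \sum_c (\psi_c/\psi_i)\, F^\mu_{cj} \otimes F^\mu_{ic}$ contributes. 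Using multiplicativity of the pairing on simple tensors, and verifying that the group-like factors $\psi_c/\psi_i$ pair trivially against pure shuffle elements of matching weight, the expression should reduce to
\[
\sum_c \langle F^\mu_{cj},\, F^\mu_{a_1 b_1}\rangle \cdot \langle F^\mu_{ic},\, Y\rangle.
\]
The base case makes the first factor equal to $(1-q^{-2})$ precisely when $[a_1;b_1) = [j;c)$, which pins $c$ down in $\zzz$; the induction hypothesis makes the second factor equal to $(1-q^{-2})^{t-1}$ precisely when $b_2 \equiv a_3, \ldots, b_{t-1} \equiv a_t$ and $\sum_{k=2}^t [a_k;b_k) = [c;i)$. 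These combine via $c \equiv b_1 \equiv a_2$ to yield the gluing condition $b_1 \equiv a_2$ and the telescoping identity $\sum_{k=1}^t [a_k;b_k) = [j;c)+[c;i) = [j;i)$, so exactly one value of $c$ survives and the total equals $(1-q^{-2})^t$, closing the induction.

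The main obstacle I anticipate is the bookkeeping for the Cartan factor $\psi_c/\psi_i$ appearing in \eqref{eqn:coproduct f}: one must confirm that in the Hopf pairing between $\tCA^+$ and $\tCA^-$ of Proposition \ref{prop:pairing}, these group-like elements pair to $1$ rather than to a non-trivial $q$-power against the shuffle elements $F^\mu_{a_1 b_1}$ whose weight matches the horizontal degree of $\psi_c/\psi_i$. This should follow from the normalizations in \cite{Tale}, but it is the one spot where a computational rather than structural check is needed; everything else is determined by the shape of $\Delta_\mu(F^\mu_{ij})$ and the base identity \eqref{eqn:one}.
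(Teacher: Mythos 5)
Your proposal is correct and matches the paper's proof in all essentials: both reduce the claim to the pairing formula \eqref{eqn:one} via the bialgebra property \eqref{eqn:bialg 2} and the coproduct formula \eqref{eqn:coproduct f}, the only cosmetic difference being that you peel off one tensor factor at a time by induction on $t$ while the paper applies the iterated coproduct $\Delta^{(t-1)}$ in a single step. The bookkeeping concern you flag about the Cartan factors $\psi_c/\psi_i$ is real but is also elided in the paper's own proof, which silently drops them when invoking \eqref{eqn:one}; this works because the Hopf pairing is graded and the $\psi$-factor contributes a unit normalization for each summand that survives the Kronecker deltas.
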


\begin{proof} By \eqref{eqn:bialg 2}, the left-hand side of \eqref{eqn:pair f} is equal to:
$$
\left \langle \Delta^{(t-1)} \left( F^\mu_{ij} \right), F^\mu_{a_1b_1} \otimes ... \otimes F^\mu_{a_tb_t} \right \rangle = 
$$
\begin{align*} 
&\stackrel{\eqref{eqn:coproduct f}}= \sum_{j = c_0 , c_1,...,c_{t-1}, c_t = i} \left \langle \bigotimes_{k=1}^t \frac {\psi_{c_k}}{\psi_i} F^\mu_{c_kc_{k-1}}, \bigotimes_{k=1}^t F^\mu_{a_kb_k}  \right \rangle = \\
&\stackrel{\eqref{eqn:one}}= \sum_{j = c_0 , c_1,...,c_{t-1}, c_t = i} \prod_{k=1}^t \delta_{(c_{k-1},c_k)}^{(a_k,b_k)} (1-q^{-2})
\end{align*}
The expression above precisely matches the right-hand side of \eqref{eqn:pair f}. 	
	
\end{proof}

\subsection{} Let us now consider the following elements of $\wCA^+$: \\

\begin{definition}
\label{def:series}

For any $(i,j) \in \zzz$ and $k > 0$, define $W_{ij}^{(k)} \in \wCA^+$ by:
\begin{equation}
\label{eqn:def w}
W_{ij}^{(k)} = \sum_{k_1,...,k_t \in \BN}^{k_1+...+k_n = k} \sum^{i = c_0,c_1,...,c_{t-1},c_t = j}_{\frac {c_0-c_1}{k_1} > ... > \frac {c_{t-1}-c_t}{k_t}} F_{c_0c_1}^{(k_1)} ... F_{c_{t-1}c_t}^{(k_t)} \cdot p^{\frac {2(\alpha(v) - (k-1)\bari - \barj)}n} 
\end{equation}
where $p = q^n \oq$, and we associate the integer:
$$
\alpha(v) = \sum_{s=1}^t \left[ \frac {k_s(c_{s-1}-c_s) - k_s - (c_{s-1}-c_s) + \gcd(c_{s-1}-c_s,k_s)}2 + k_s(c_0-c_{s-1}) \right]
$$
to the sequence of lattice points $v = \{(c_0-c_1,k_1),...,(c_{t-1}-c_t,k_t)\}$. \\ 
 
\end{definition}

\noindent The abuse of notation between the symbols of Subsection \ref{sub:symbols} and the elements \eqref{eqn:def w} is intentional, since sending the former to the latter will give rise to the isomorphism of Theorem \ref{thm:main 1}. Let us form the generating series:
\begin{equation}
\label{eqn:def w series}
W^{(k)}(y) = \sum_{(i,j) \in \zzz} W_{ij}^{(k)} \cdot E_{ij} 
\end{equation}
which takes values in $\wCA^+ \otimes \End(V)[[y^{\pm 1}]]$, where $E_{ij} = E_{\bari\barj} y^{\left \lfloor \frac {i-1}n \right \rfloor - \left \lfloor \frac {j-1}n \right \rfloor}$. \\

\begin{proposition} 
\label{prop:pair w}

For any $X^- \in \CA_{-k}$, we have:
\begin{equation}
\label{eqn:pairing w}
\Big \langle W^{(k)}(y) , X^- \Big \rangle = (q^2-1)^k \cdot X^{(k)}\left( \frac 1y \right)^\dagger
\end{equation}
where $X^{(k)}(y)$ is the matrix featuring in Definition \ref{def:shuf aff}, and $\dagger$ denotes transpose. \\
	
\end{proposition}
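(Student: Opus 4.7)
The plan is to leverage the definition of $W_{ij}^{(k)}$ as a sum of products of $F$-elements with strictly decreasing slopes, together with the multiplicative behaviour of the pairing with respect to the slope decomposition captured in Proposition \ref{prop:pair 1}.

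First, by the linear basis \eqref{eqn:linear basis 1} of $\CA^-$ and linearity in the second argument, I would reduce to verifying \eqref{eqn:pairing w} when $X^-$ is a PBW-type monomial $X^- = F_{a_1 b_1}^{(-l_1)} \cdots F_{a_r b_r}^{(-l_r)}$ with weakly decreasing slopes $\tfrac{a_1-b_1}{l_1} \geq \cdots \geq \tfrac{a_r-b_r}{l_r}$ and $l_1+\cdots+l_r = k$. The LHS is then unfolded by substituting \eqref{eqn:def w} and applying the bialgebra formula \eqref{eqn:bialg 1} iteratively, turning each summand into
$$\Big\langle F_{c_0c_1}^{(k_1)} \cdots F_{c_{t-1}c_t}^{(k_t)}, X^- \Big\rangle = \Big\langle F_{c_{t-1}c_t}^{(k_t)} \otimes \cdots \otimes F_{c_0c_1}^{(k_1)}, \Delta^{(t-1)}(X^-) \Big\rangle.$$
The RHS is computed by unwinding the iterated residue defining $X^{(k)}(y)$ directly on the shuffle-product representative of $X^-$.

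Second, by \eqref{eqn:obs 1}--\eqref{eqn:obs 2} and Proposition \ref{prop:pair 1}, the pairing vanishes unless each tensor factor $F_{c_{s-1}c_s}^{(k_s)}$ is paired with a slope-homogeneous component of matching slope $\mu_s = \tfrac{c_{s-1}-c_s}{k_s}$. Since the $\mu_s$ are strictly decreasing (by the ordering constraint in Definition \ref{def:series}), this forces a specific alignment with the slope filtration of $X^-$. The slope coproduct $\Delta_\mu$ of \eqref{eqn:copoduct mu} and its explicit form \eqref{eqn:coproduct f} then reduce each individual pairing to a product of Kronecker deltas times $(1-q^{-2})$-factors, exactly as packaged by Proposition \ref{prop:pair aff}.

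Third, I would apply Proposition \ref{prop:pair f} to each surviving single-$F$ pairing, producing the coefficient of $E_{\bar{c}_s \bar{c}_{s-1}}$ in a $p$-rescaling of the matrix $X_s^{(k_s)}(y)$ associated with the $s$-th slope block. Assembling these contributions over all admissible sequences $(c_0,\ldots,c_t)$ and weighting by $E_{ij} = E_{\bari\barj} y^{\lfloor(i-1)/n\rfloor - \lfloor(j-1)/n\rfloor}$, the combinatorics of the iterated residue \eqref{eqn:iterated} defining $X^{(k)}(y)$ should reassemble the matrix $(q^2-1)^k X^{(k)}(1/y)^\dagger$. The transpose and the substitution $y \leadsto 1/y$ on the right-hand side reflect, respectively, the index swap $E_{\bari\barj} \leftrightarrow E_{\barj\bari}$ in Proposition \ref{prop:pair f} and the mismatch between the exponent $y^{\lfloor(i-1)/n\rfloor - \lfloor(j-1)/n\rfloor}$ appearing in $W^{(k)}(y)$ and the opposite-signed exponent appearing there.

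The main obstacle will be the careful bookkeeping of the $p$-powers across the iteration: one must verify that the integer $\alpha(v)$ in Definition \ref{def:series} is precisely the sum of the individual $p$-exponents appearing in Proposition \ref{prop:pair f} applied to each slope block, after accounting for the $\psi_k$-twists in the coproduct \eqref{eqn:coproduct f} and for the explicit normalizations \eqref{eqn:syn} of $F_{j'i'}^{(-k')}$. Once this combinatorial identity of exponents is checked, the matching of both sides of \eqref{eqn:pairing w} follows automatically.
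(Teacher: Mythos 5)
Your proposal is correct and follows essentially the same route as the paper's own proof: reduce to a PBW monomial in $\CA^-$, use Proposition~\ref{prop:pair 1} (multiplicativity of the pairing over slope blocks, which you realize via iterated applications of \eqref{eqn:bialg 1}), evaluate each block via Proposition~\ref{prop:pair aff} and \eqref{eqn:syn}/Proposition~\ref{prop:pair f}, unwind the residue defining $X^{(k)}(y)$ on a shuffle-product representative using its multiplicativity, and finally match the resulting $p$-exponents against $\alpha(v)$. The paper phrases the right-hand side computation in terms of the explicit identity $(X*X')^{(k+k')}(y) = X^{(k)}(y) X'^{(k')}(yp^{-2k})$ together with \eqref{eqn:syn}, whereas you phrase it as applying Proposition~\ref{prop:pair f} block-by-block; these are equivalent, and the remaining combinatorial exponent check you flag is exactly the one the paper defers as "an easy combinatorial exercise."
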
 

\noindent Let us compare Proposition \ref{prop:pair f} with Proposition \ref{prop:pair w}. The latter generalizes the formula in the former from $X^- \in \CB_\mu^-$ to all $X^- \in \CA^-$, but at the cost of replacing the single element $F_{ij}^\mu$ by the entire series \eqref{eqn:def w} of products of such elements. \\ 

\begin{proof} We will refer to elements $[i;j) \in \zz$ as ``arcs", in the sense that if we mark vertices $1,...,n$ on a circle, then $[i;j)$ corresponds to the arc starting at the vertex $\bari$ and going $j-i-1$ steps in the positive direction. We will say that arcs $[a;b)$ and $[a';b')$ string together to form the arc $[i;j)$ if $b \equiv a'$ and $[a;b) + [a';b') = [i;j)$. \\
	
\noindent Because formula \eqref{eqn:pairing w} is additive, it suffices to prove it for:
$$
X^- = \prod_{\mu}^\rightarrow \left[ F^{(-k_1^\mu)}_{a^\mu_1 b^\mu_1} F^{(-k_2^\mu)}_{a^\mu_{2} b^\mu_{2}} ... \right]
$$
(the product is taken in increasing order of $\mu \in \BQ$), for various choices of: 
$$
(a_s^\mu, b_s^\mu) \in \zzz \quad \text{and} \quad k_s^\mu \in \BN \quad \text{such that} \quad \frac {b_s^\mu - a_s^\mu}{k_s^\mu} = \mu 
$$
By Proposition \ref{prop:pair 1} and \eqref{eqn:pair f}, the left-hand side of \eqref{eqn:pairing w} vanishes for our $X^-$ unless the arcs $[a_1^\mu;b_1^\mu)$, $[a_2^\mu;b_2^\mu)$,... string together to form an arc $[j;i)$, so we have:
$$
\text{LHS of \eqref{eqn:pairing w}} = \begin{cases} p^{\frac {2(\alpha(v)-(k-1)\bari-\barj)}n} (1-q^{-2})^{\# \text{ arcs}} E_{ij} &\text{if the arcs string to form }[j;i) \\ 0 &\text{otherwise} \end{cases}
$$
where $v$ is the sequence $\{...,(b_2^\mu-a_2^\mu,k_2^\mu),(b_1^\mu-a_1^\mu,k_1^\mu),...\}$ in increasing order of $\mu$. Meanwhile, we have the multiplicativity property of the operation $X \leadsto X^{(k)}(y)$:
$$
(X*X')^{(k+k')}(y) = X^{(k)}(y) {X'}^{(k')}(y p^{-2k})
$$
for any $X \in \CA_{-k}$, $X' \in \CA_{-k'}$ (as proved in Proposition 5.12 of \cite{Tale}). Taking into account \eqref{eqn:syn}, we see that:
\begin{equation}
\label{eqn:above}
\left( \prod_{\mu}^\rightarrow F^{(-k_1^\mu)}_{a^\mu_1 b^\mu_1} F^{(-k_2^\mu)}_{a^\mu_{2} b^\mu_{2}} ...  \right)^{(k)}(y)
\end{equation}
vanishes unless the arcs $[a_1^\mu;b_1^\mu)$, $[a_2^\mu;b_2^\mu)$,... string together in increasing order of $\mu$ to form an arc $[j;i)$. More explicitly, we have:
$$
\text{RHS of \eqref{eqn:pairing w}} = \begin{cases} p^{\frac dn}  (1-q^{-2})^{\#\text{ arcs}} E_{ij} &\text{if the arcs string to form }[j;i) \\ 0 &\text{otherwise}  \end{cases} 
$$
where:
$$
d = \sum_\mu \left[ \sum_{s=1,2,...} k_s^\mu (b_s^\mu - a_s^\mu) - k_s^\mu - (b_s^\mu - a_s^\mu) + \gcd(b_s^\mu - a_s^\mu,k_s^\mu) - 2(k_s^\mu-1)\overline{b_s^\mu} - 2 \overline{a_s^\mu} \right] -
$$
$$
- \sum^{\mu < \mu' \text{ and any }s,s', \text{ or}}_{\mu = \mu' \text{ and } s<s'} 2 n k^\mu_s \left( \left \lfloor \frac {a_{s'}^{\mu'} - 1}n \right \rfloor - \left \lfloor \frac {b_{s'}^{\mu'} - 1}n \right \rfloor  \right)
$$
Given that the arcs $[a_1^\mu;b_1^\mu)$, $[a_2^\mu;b_2^\mu)$,... string together in increasing order of $\mu$ to form an arc $[j;i)$, it is an easy combinatorial exercise to see that $d = 2(\alpha(v)-(k-1)\bari-\barj)$. This shows that the left and right hand sides of \eqref{eqn:pairing w} are equal.

\end{proof}

\subsection{} We will now deduce quadratic relations between the generating series $W^{(k)}(y)$, and show that they match \eqref{eqn:w rels}. To this end, we recall from \cite{Tale} that for any natural numbers $k \leq k'$ and $X^- \in \CA_{-k-k'}$, the rational function:
$$
X^{(k,k')}(x,y)
$$
has at most simple poles at:
$$
y = x p^{-2a} \qquad \text{for } a \in \{1,...,k\} \sqcup \{-k',...,k-k'-1\} 
$$
The Proposition below computes the corresponding residues: \\

\begin{proposition}
\label{prop:residues}

Consider any natural numbers $k \leq k'$ and any $X^- \in \CA_{-k-k'}$. For any $a \in \{1,...,k\}$, we have:
\begin{equation}
\label{eqn:res 1}
\underset{y = x p^{-2a}}{\emph{Res}} X^{(k,k')}(x,y) = 
\end{equation}
$$
= R \left(p^{2a} \right) X_{21}^{(k-a,k'+a)}(xp^{-2a},x) (q^{-1} - q) (12) \prod_{i=1}^{a-1} f(p^{-2i}) 
$$
while for any $a \in \{-k',...,k-k'-1\}$, we have:
\begin{equation}
\label{eqn:res 2}
\underset{y = x p^{-2a}}{\emph{Res}} X^{(k,k')}(x,y) = 
\end{equation}
$$ 
= X_{12}^{(k'+a,k-a)}(x,xp^{-2a})  R \left( p^{2(k-k'-a)} \right) (q - q^{-1}) (12) \prod_{i=1}^{k-k'-a-1} f(p^{-2i}) 
$$
\footnote{To make formulas \eqref{eqn:res 1} and \eqref{eqn:res 2} hold in the cases $a=k$ and $a=-k'$, we declare: 
$$
X_{21}^{(0,k+k')}(xp^{-2k},x) = X_1^{(k+k')}(x) \qquad \text{and} \qquad X_{12}^{(0,k+k')}(x,xp^{2k'}) = X_2^{(k+k')}(xp^{2k'})
$$} We note that the right-hand sides of all expressions above are regular. \\

\end{proposition}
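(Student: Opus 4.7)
The plan is to establish the pole locations and residue formulas by a direct analysis of the iterated-residue definition of $X^{(k,k')}(x,y)$, leveraging the generation of $\CA^-$ by its degree-$1$ piece (Proposition \ref{prop:generation}). By linearity I may take $X^- = J_1^- * \cdots * J_{k+k'}^-$ with $J_i \in \End(V)[z^{\pm 1}]$, and then $X^{(k,k')}(x,y)$ is obtained by unfolding the shuffle product \eqref{eqn:shuf prod}, specialising the variables to $z_i = xp^{-2(i-1)}$ for $i \le k$ and $z_{k+j} = yp^{-2(j-1)}$ for $j \le k'$ (according to Definition \ref{def:shuf aff}, in the $\CA^-$ convention where $p^{-1}$ plays the role of $\oq$), and dividing by the corresponding blue-over-blue normalisation.

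For the pole analysis, I observe that the only factors in \eqref{eqn:shuf prod} whose specialisation depends on the ratio $y/x$ are the crossings $\tR^-_{ij}(z_i/z_j)$ connecting the $x$-block $\{1,\dots,k\}$ to the $y$-block $\{k+1,\dots,k+k'\}$. Each such crossing has a unique pole at $z_i/z_j \in \{p^{\pm 2}\}$, which under the specialisation becomes $y/x = p^{-2a}$ for $a$ determined by the indices. A careful count, combined with the wheel conditions in Definition \ref{def:shuf aff} and cancellations from the blue-over-blue denominator, matches these exactly with $a \in \{1,\dots,k\} \sqcup \{-k',\dots,k-k'-1\}$ as claimed. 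For the residue \eqref{eqn:res 1} at $a \in \{1,\dots,k\}$, I use identity \eqref{eqn:residues} to replace the relevant $\tR^-$ crossing by $(q^{-1}-q)$ times the permutation $(12)$. The permutation merges one variable of the $y$-block with one of the $x$-block, replacing the two-variable iterated residue of type $(k,k')$ by one of type $(k-a,k'+a)$ after the required index reshuffling; the leftover crossings between the two re-organised blocks consolidate into $R(p^{2a})$, and the adjustment from the old to the new blue-over-blue normalisation produces $\prod_{i=1}^{a-1} f(p^{-2i})$. The case $a \in \{-k',\dots,k-k'-1\}$ for \eqref{eqn:res 2} is handled in parallel with the roles of the two blocks swapped, yielding the sign flip $(q-q^{-1})$.

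The main obstacle will be the meticulous bookkeeping of the $R$-matrix factors, the blue-over-blue normalisation and the permutation indices needed to match both sides on the nose. Once these are in hand, regularity of the right-hand sides of \eqref{eqn:res 1}--\eqref{eqn:res 2} follows inductively, since the admissible $a$-range for the pairs $(k-a,k'+a)$ and $(k'+a,k-a)$ does not contain the specialisation one has just performed. To propagate the formula from degree-$1$ generators to arbitrary elements of $\CA^-$, I will use a two-variable extension of the multiplicativity property $(X*X')^{(m+m')}(y) = X^{(m)}(y) {X'}^{(m')}(yp^{-2m})$ of Proposition 5.12 of \cite{Tale}: namely an expression of $(X*X')^{(k,k')}(x,y)$ as a sum of products $X^{(a,b)} \cdot {X'}^{(a',b')}$ with $a+a' = k$ and $b+b' = k'$, decorated by appropriate $R$-matrices. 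This extension is compatible with the residue formulas in the natural way and closes the induction.
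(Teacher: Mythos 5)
The paper's own proof operates entirely at the level of braid diagrams for iterated residues, never reducing to explicit shuffle-product generators: it rewrites the $(k+k')$-variable residue of $X$ at the collection \eqref{eqn:collection} in two ways using the symmetry property \eqref{eqn:symmetry identity} and the diagrammatic calculus (colon sliding, Reidemeister moves, crossing changes via \eqref{eqn:unitary}), and extracts \eqref{eqn:res 1} by matching the two pictures. Relation \eqref{eqn:res 2} is then deduced from \eqref{eqn:res 1} by the exchange identity \eqref{eqn:sym}, not by a parallel computation. Your route --- reducing to $X^- = J_1^- * \cdots * J_{k+k'}^-$ via Proposition \ref{prop:generation}, unfolding \eqref{eqn:shuf prod}, and locating poles directly --- is a genuinely different, more computational strategy.

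However, the pivotal step in your pole analysis has a gap. You assert that the only factors in \eqref{eqn:shuf prod} whose specialisation depends on $y/x$ are the cross-block $\tR^-_{ij}$ crossings. This is not correct: the $R_{a_ib_j}$ crossings in the first and third brackets of \eqref{eqn:shuf prod} also tie variables across the two blocks, and under the specialisation they contribute poles at $y/x$ equal to various powers of $p^2$. More importantly, $X^{(k,k')}(x,y)$ is not the raw specialised shuffle product: per Definition \ref{def:shuf aff}, it is the matrix obtained after factoring off the Figure 4 dressing (cross-group $R$-crossings, the $D$'s, and the ``blue over blue'' normaliser), and that dressing has nontrivial zeros and poles in $y/x$ that shift the pole count. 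You mention ``cancellations from the blue-over-blue denominator'' but give no accounting; that bookkeeping is exactly what the paper's braid calculus is designed to discharge, and without it the claimed pole set $\{1,\dots,k\}\sqcup\{-k',\dots,k-k'-1\}$ and the appearance of the precise factors $R(p^{2a})$ and $\prod_{i=1}^{a-1} f(p^{-2i})$ in \eqref{eqn:res 1} are not established.

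There is also an organisational inconsistency. In the first paragraph you reduce to $X^- = J_1^-*\cdots*J_{k+k'}^-$ by linearity, which would finish the general case once that one is proved (such products do lie in $\CA^-$, as $\CA^-$ is a subalgebra, and $X\mapsto X^{(k,k')}$ and both sides of \eqref{eqn:res 1}--\eqref{eqn:res 2} are linear in $X$). Yet the last paragraph proposes a further ``propagation'' to arbitrary $X^- \in \CA^-$ via an unstated two-variable multiplicativity identity for $(X*X')^{(k,k')}$. If the shuffle-product computation in paragraph two works, the last paragraph is redundant; if it doesn't, then the bilinear residue formula you invoke is itself the hard content and needs a proof. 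Either way, one of the two arguments is missing its substance.
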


\begin{proof} We will first prove \eqref{eqn:res 1}, and note that our proof only requires us to assume that $k' + a > k > a > 0$ (instead of the stronger condition $k' \geq k$). By definition:
\begin{equation}
\label{eqn:collection}
\underset{z_1 = x, z_2 = x p^{-2},...,z_k = x p^{2-2k}, z_{k+1} = x p^{-2a}, z_{k+2} = x p^{-2a-2},..., z_{k+k'} = x p^{2-2a-2k'}}{\res} X	
\end{equation}
(henceforth referred to simply as ``the residue of $X$") is given by the braid in Figure 4, with $X^{(\lambda_1,\lambda_2)}$ replaced by the left-hand side of \eqref{eqn:res 1}. Throughout the present proof, we will ignore the diagonal matrix $D_1...D_{k+k'}$ and the factors ``blue over blue" that are required to multiply the braid in Figure 4, because they will be the same in all the braids we depict, and hence will not affect our proof. \\

\noindent The symmetry property \eqref{eqn:symmetry identity} implies that the braid that represents the residue of $X$ is unchanged if we conjugate it with any product of crossings. In the figure below, we make a particular choice of such a product of crossings, and conclude that the residue of $X$ is also represented by the following braid (note that the color coding of the red, blue and purple strands doesn't mean anything other than the fact that they will play a special role in the argument below, and having different colors will make the proof easier to follow): 
	
\begin{figure}[ht]    
\centering
\includegraphics[scale=0.2]{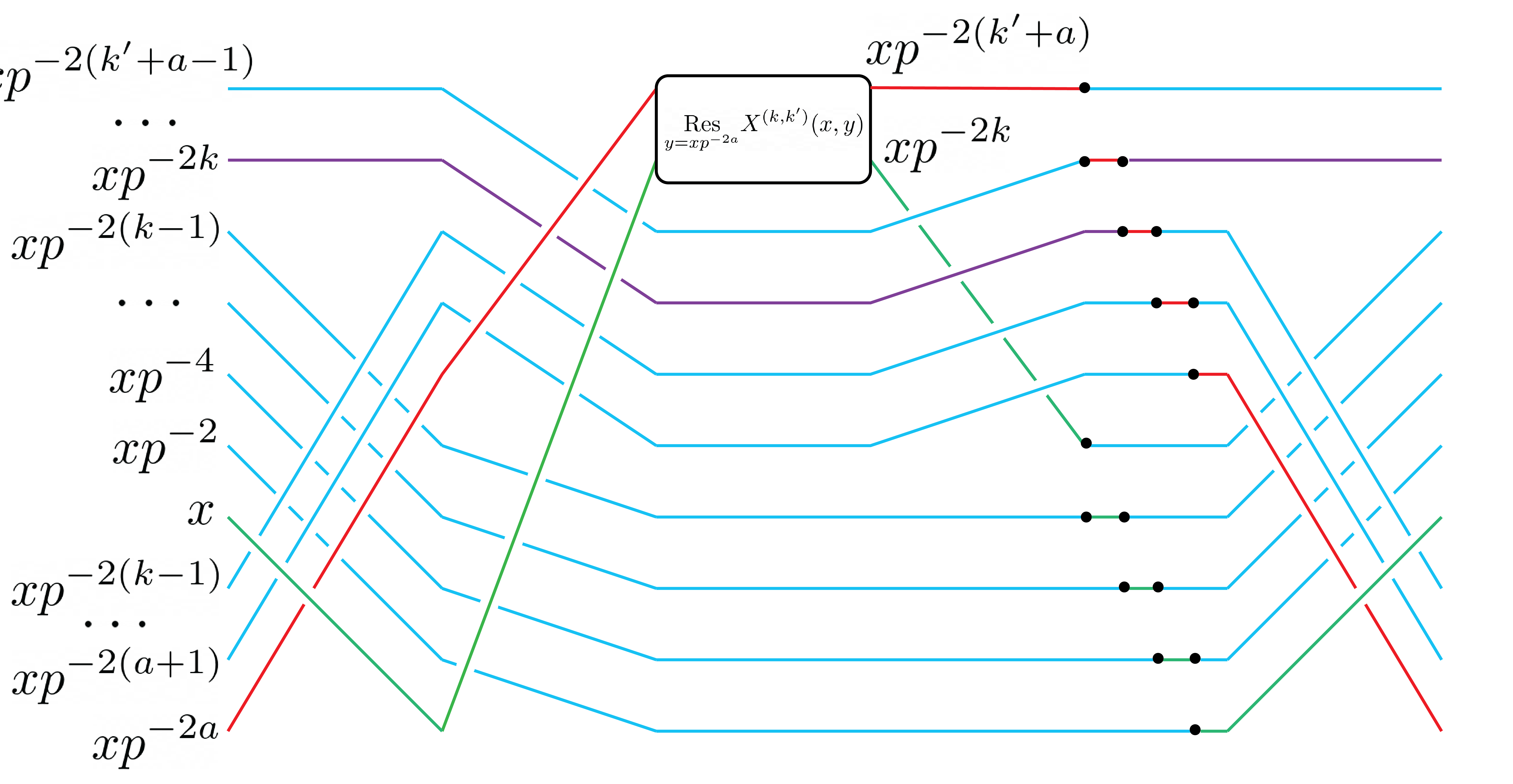}
\end{figure}

\noindent In the formula above, let us now replace:
\begin{equation}
\label{eqn:substitution}
\underset{y = x p^{-2a}}{\text{Res}} X^{(k,k')}(x,y) = R \left(p^{2a} \right) \cdot Y \cdot (q^{-1} - q) (12) 
\end{equation}
and the goal will be to prove:
\begin{equation}
\label{eqn:finally}
Y = X_{21}^{(k-a,k'+a)}(xp^{-2a},x) \prod_{i=1}^{a-1} f(p^{-2i}) 
\end{equation}
If we plug the substitution \eqref{eqn:substitution} into the braid depicted above, and slide all the colons (except for the colon involving the red and green strands) to the far right of the picture, we conclude that the residue of $X$ is equal to:

\begin{figure}[ht]    
	\centering
	\includegraphics[scale=0.2]{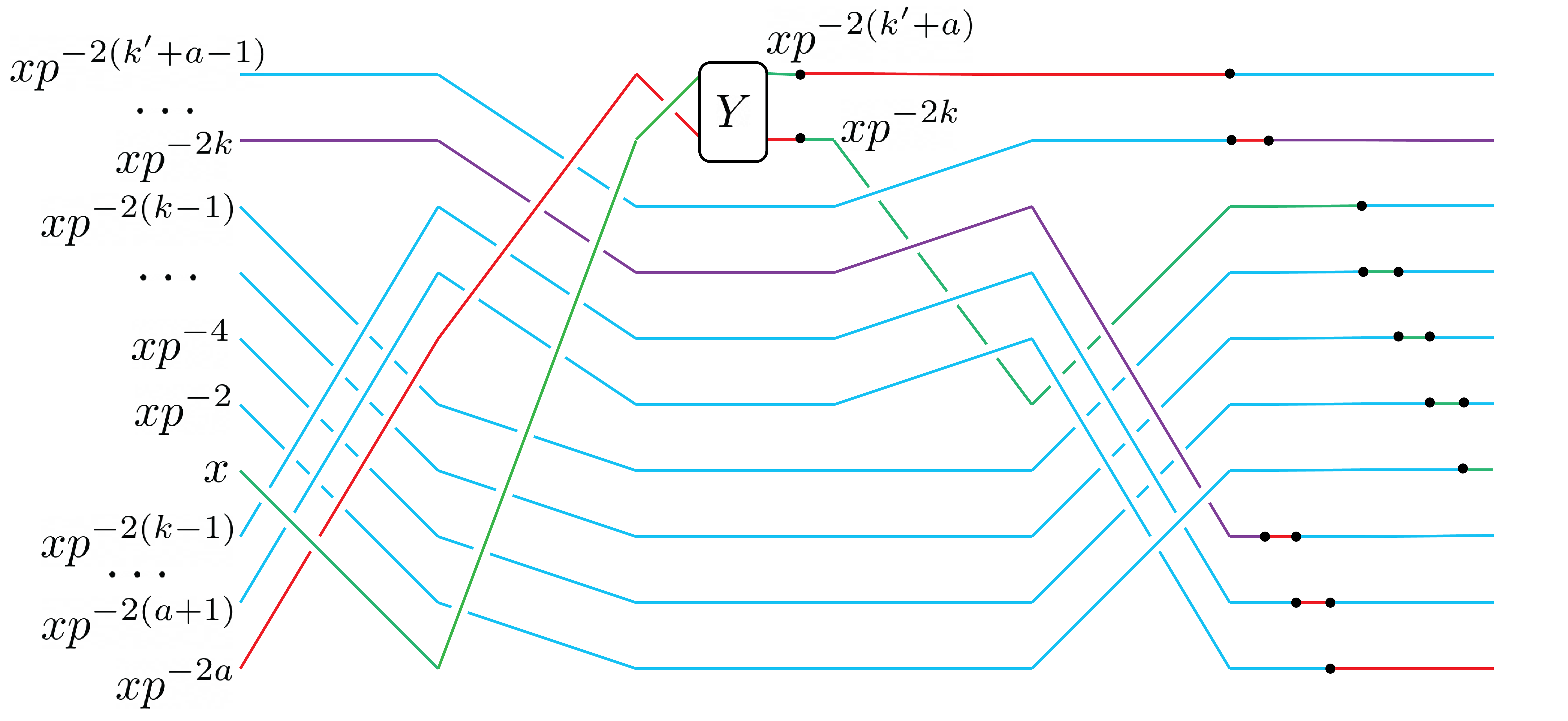}
\end{figure}

\noindent Note now the right-most intersection of the green and purple braid. Since the variables on these two braids are the same, formula \eqref{eqn:residues} for the residue of the $R$--matrix means that we can replace the crossing by a colon. Therefore, the braid above is equal to the one below (we apply some braid isotopies to simplify the left part of the picture): 

\begin{figure}[ht]    
	\centering
	\includegraphics[scale=0.2]{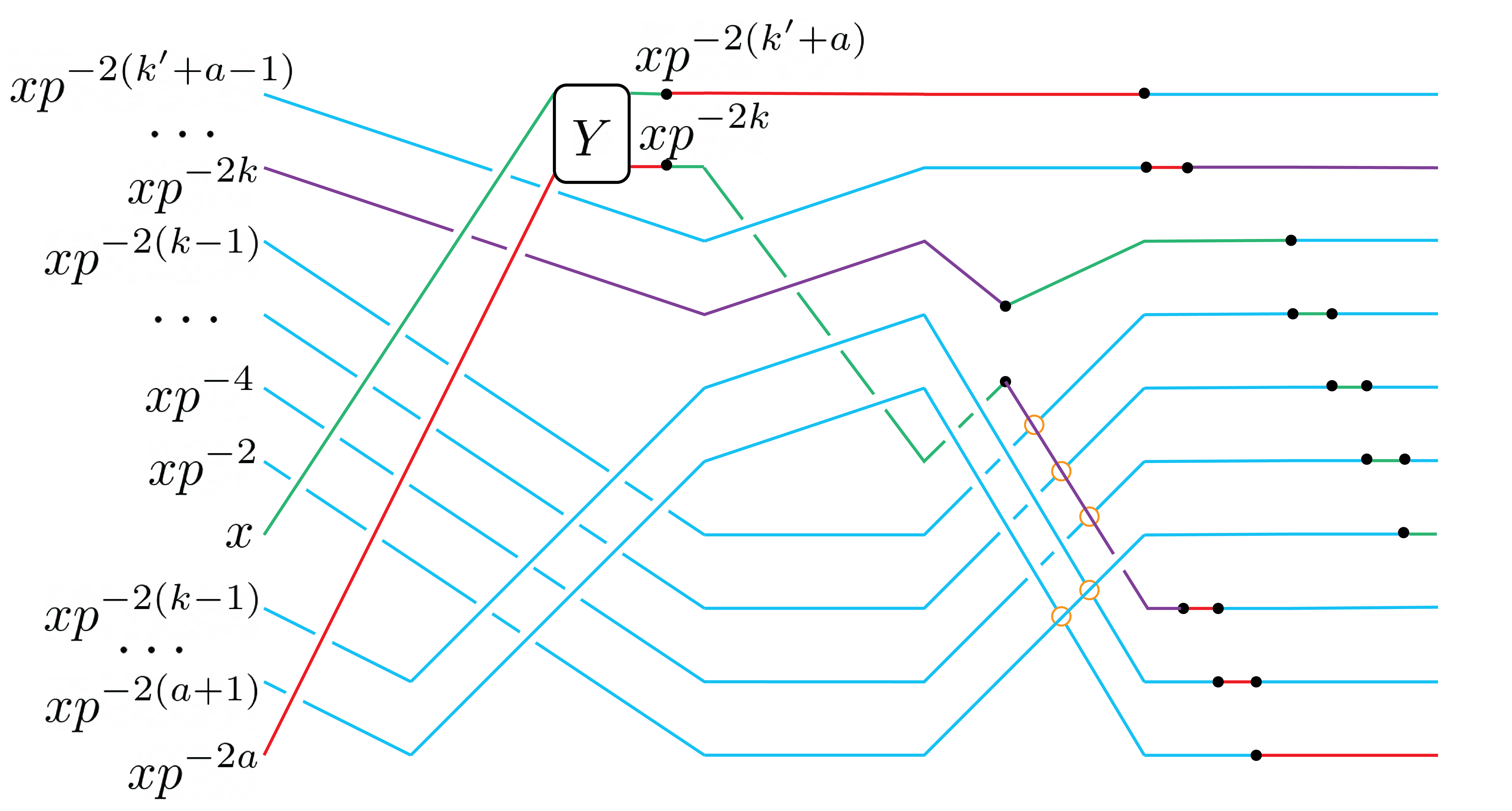}
\end{figure}

\noindent If we change the crossings marked by orange circles (which is allowed because of \eqref{eqn:unitary}), then we can transform the braid above into the braid below:

\begin{figure}[ht]    
	\centering
	\includegraphics[scale=0.2]{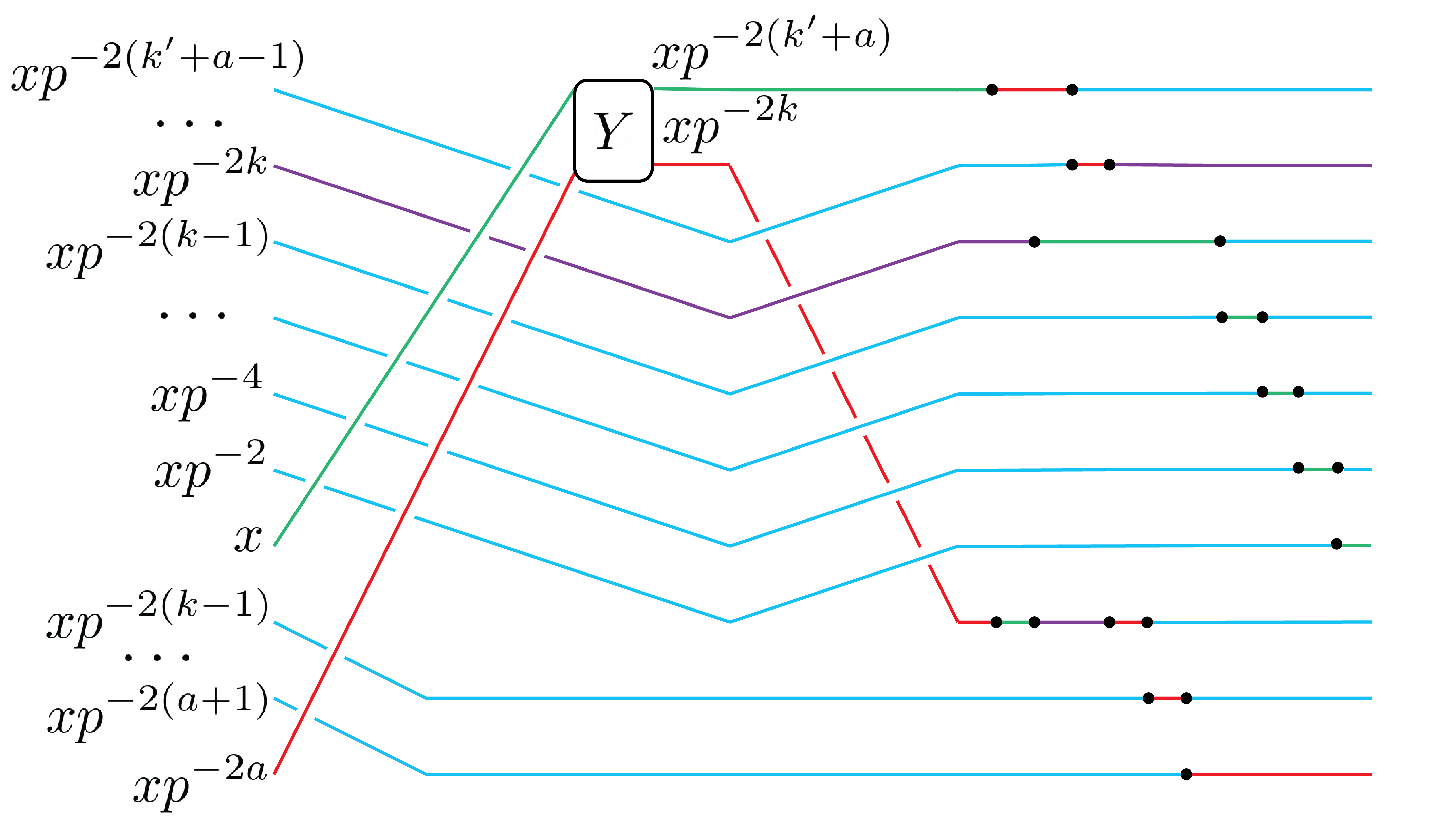}
\end{figure}

\noindent and conclude that $Y = X_{21}^{(k-a,k+a)}(xp^{-2a},x)$ times the cost of changing the crossings in question. This cost is precisely equal to:
$$
\frac {\prod_{i=a+1}^{k-1} f\left( \frac {p^2}{p^{2i}} \right)}{\prod_{i=2}^{k-1} f\left( \frac {p^{2k}}{p^{2i}} \right)} = \frac 1{\prod_{i=1}^{a-1} f(p^{-2i})}
$$
This implies formula \eqref{eqn:finally}, and with it \eqref{eqn:res 1}. Having proved \eqref{eqn:res 1} for all $k,k',a$ such that $k' + a > k > a > 0$, we note that \eqref{eqn:res 2} for all $k,k',a$ such that $k > k' + a > 0 > a$ follows from it, due to the equality of rational functions below:
\begin{equation}
\label{eqn:sym}
X^{(k,k')}(x,y) = 
\end{equation}
$$
R_{12} \left( \frac xy \right) X_{21}^{(k',k)}(y,x) R_{12} \left( \frac {xp^{2k'}}{yp^{2k}} \right)^{-1} \begin{cases} \prod_{i=1}^{k-k'} f \left( \frac x{y p^{2i}} \right) &\text{if } k \geq k' \\ \prod_{i=k-k'+1}^{0} f \left( \frac x{y p^{2i}} \right)^{-1} &\text{if } k < k' \end{cases}
$$
The $k=k'$ case of \eqref{eqn:sym} was proved in Proposition 4.12 of \cite{Tale}, and the general case is proved by an analogous argument. Alternatively, the interested reader could prove \eqref{eqn:res 2} directly by adapting the argument presented above.
	
\end{proof} 

\begin{proposition}

For any integers $k$ and $k'$, we have:
$$
\left \langle R_{12} \left(\frac {xp^{2k}}{yp^{2k'}} \right) W^{(k)}_1(x) R_{21} \left(\frac {yp^{2k'}}x \right) W^{(k')}_2(y), X^- \right \rangle \prod_{i=k'-k+1}^{k'-1} f \left(\frac x{y p^{2i}} \right) = 
$$
\begin{equation}
\label{eqn:expand 1}
= (q^2-1)^{k+k'} \cdot X^{(k,k')} \left( \frac 1x, \frac 1y \right)^{\dagger_1 \dagger_2}
\end{equation} 
expanded for $|x| \gg |y|$, and:
$$
\left \langle W^{(k')}_2(y) R_{12} \left(\frac {xp^{2k}}y \right) W^{(k)}_1(x) R_{21} \left(\frac {y}x \right) , X^- \right \rangle \prod_{i=1}^{k-1} f \left( \frac {y}{xp^{2i}} \right) =
$$
\begin{equation}
\label{eqn:expand 2}
= (q^2-1)^{k+k'} \cdot X^{(k,k')} \left( \frac 1x, \frac 1{y} \right)^{\dagger_1 \dagger_2}
\end{equation} 
expanded for $|x| \ll |y|$. \\

\end{proposition}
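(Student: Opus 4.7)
The strategy is to lift Proposition~\ref{prop:pair w} from a single $W$-series to two, using the bialgebra pairing \eqref{eqn:bialg 1}. First, I would observe that the matrix-valued $R$-matrices and scalar $f$-factors in \eqref{eqn:expand 1}--\eqref{eqn:expand 2} do not involve the $\wCA^+$ component of the $W$-series, so they pass through the pairing unchanged; the essential content is the computation of the two pairings $\langle W^{(k)}_1(x) W^{(k')}_2(y), X^-\rangle$ (for \eqref{eqn:expand 1}) and $\langle W^{(k')}_2(y) W^{(k)}_1(x), X^-\rangle$ (for \eqref{eqn:expand 2}). By \eqref{eqn:bialg 1}, these equal $\langle W^{(k')}_2(y) \otimes W^{(k)}_1(x), \Delta(X^-)\rangle$ and $\langle W^{(k)}_1(x) \otimes W^{(k')}_2(y), \Delta(X^-)\rangle$ respectively.

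In the coproduct formula \eqref{eqn:coproduct} for $X^- \in \CA_{-(k+k')}$, vertical-degree matching forces only a single summand to survive in each pairing: the $i = k'$ summand for the first, and the $i = k$ summand for the second, so that the $\CA^-$ pieces of the two tensor factors of $\Delta(X^-)$ carry vertical degrees matching the two $W$-series. For that summand, one applies Proposition~\ref{prop:pair w} to the pure $\CA^-$ part of each tensor factor, obtaining the matrix pieces $X^{(k)}(1/x)^\dagger$ and $X^{(k')}(1/y)^\dagger$ in slots~$1$ and~$2$ of $\End(V \otimes V)$ respectively. The $S^-$ and $T^-$ dressing factors appearing on the first tensor slot of \eqref{eqn:coproduct}, when paired with the opposing $W$-series, produce precisely the $R$-matrix braiding visible in the definition of $X^{(k,k')}$ via Figure~4 (with $u = 2$, $\lambda_1 = k$, $\lambda_2 = k'$); this follows from the RTT relations \eqref{eqn:rtt 1 new}--\eqref{eqn:rtt 3 new} that govern the bialgebra structure. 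Once reassembled, this is $X^{(k,k')}(1/x, 1/y)^{\dagger_1 \dagger_2}$, and the overall factor $(q^2-1)^{k+k'} = (q^2-1)^k (q^2-1)^{k'}$ arises as the product of the normalizations from the two applications of Proposition~\ref{prop:pair w}.

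The two expansion directions $|x| \gg |y|$ and $|x| \ll |y|$ emerge naturally from the two opposite orderings of the $W$-series in the original product: the non-cocommutativity of $\Delta$ is controlled by the $R$-matrix, and it precisely encodes the switch between the two expansions of the same rational function $X^{(k,k')}(1/x, 1/y)^{\dagger_1 \dagger_2}$. This interpretation is also consistent with the pole structure in $x/y$ identified in Proposition~\ref{prop:residues}: the difference between the two expansions is a sum of delta functions supported at the poles $\{y = x p^{2a}\}$, which matches the right-hand side of the relations \eqref{eqn:w rels} that we are ultimately aiming at.

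The main obstacle will be the bookkeeping of $R$-matrix arguments and $f$-factor shifts. One must verify that the explicit prefactors $R_{12}(x p^{2k} / (y p^{2k'}))$, $R_{21}(y p^{2k'} / x)$, and $\prod_{i = k' - k + 1}^{k' - 1} f(x / (y p^{2i}))$ in \eqref{eqn:expand 1} (together with their analogues in \eqref{eqn:expand 2}) combine with the $R$-matrices produced by the $S^-, T^-$ pairings and with the factor $\prod_{1 \leq u \leq k' < v \leq k + k'} f(z_u / z_v)^{-1}$ coming from the coproduct denominator, so as to reproduce exactly the crossings and the ``blue-over-blue'' normalization of Figure~4 under the $p^{-2}$-shift convention of $\CA^-$. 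Tracking the shift $p = q^n \oq$ between the two parameters of $\tR^\pm$ and the diagonal matrix $D$ from \eqref{eqn:tr minus} is delicate, but follows the same pattern as the single-variable analysis at the end of the proof of Proposition~\ref{prop:pair w}.
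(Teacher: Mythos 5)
Your proposal follows essentially the same route as the paper's own (``correct'') proof: reduce matrix-entry-wise to computing $\langle W^{(k)}_{uv} W^{(k')}_{u'v'}, X^-\rangle$, dualize the product to the coproduct via \eqref{eqn:bialg 1}, observe that vertical-degree matching isolates a single summand of \eqref{eqn:coproduct}, and then invoke \eqref{eqn:pairing w} on each tensor factor, with the $S^-, T^-$ dressing of the coproduct supplying the $R$--matrix crossings. Two small remarks: the paper first deduces \eqref{eqn:expand 2} from \eqref{eqn:expand 1} via the symmetry identity \eqref{eqn:sym}, which cuts the work in half (you treat both expansions independently); and the bulk of the paper's actual proof is the ``hefty'' cancellation — manipulating the explicit product of $R$'s, $\tR^+$'s, $D$'s and $f$'s using cyclicity of the trace, \eqref{eqn:unitary}, and the identities \eqref{eqn:property}--\eqref{eqn:elem 3} to collapse $\Theta$ down to $X^{(k,k')}$ — which you correctly flag as the main obstacle but do not carry out. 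Your phrasing that the $S^-, T^-$ pairings ``produce precisely the braiding of Figure~4'' is a bit optimistic: what they produce is a large product of crossings that must be shown to \emph{cancel} against the explicit $R$-- and $f$--prefactors of \eqref{eqn:expand 1}; the residual is the Figure~4 picture only after that cancellation. So the strategy is right and matches the paper, but the verification that the pieces fit is the real content of the proof and remains to be done.
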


\begin{proof} Formulas \eqref{eqn:expand 1} and \eqref{eqn:expand 2} are equivalent in virtue of \eqref{eqn:sym}, so we only prove the former. First, we will give a proof of \eqref{eqn:expand 1} based on a ``wrong assumption", and then explain how the proof can be adapted in order to give a correct solution (so in other words, the wrong assumption is simply a linguistic tool for better explanation). The assumption we make is that the series \eqref{eqn:def w series} can be written as:
\begin{equation}
\label{eqn:wrong}
W_\circ^{(k)}(x) =\left [n(q^{-1}-q) \right]^{1-k} \cdot \sym \Big[(12)...(1k) A_{1\circ}
\end{equation}
$$
D_1^{-1}...D_k^{-1} R_{k1}(p^{2-2k})...R_{21} (p^{-2})  \cdot F(x|z_1,...,z_k) \Big]	
$$
where:
$$
A_{1\circ} = \sum_{1\leq i,j \leq n} E_{ij} \otimes E_{ij} \in \End(V \otimes V)
$$
with the two factors of $V \otimes V$ being indexed by the labels ``1" and ``$\circ$", respectively. Moreover (and this is the point which is strictly speaking wrong), we ask that $F(x|z_1,...,z_k)$ be a distribution which has the formal property that:
\begin{equation}
\label{eqn:cov}
\int_{|z_1|\ll ... \ll |z_k|} F(x|z_1,...,z_k) X(z_1,...,z_k) = \underset{\left\{z_1 = \frac 1x, z_2 = \frac 1{xp^2},..., z_k = \frac 1{xp^{2k-2}} \right\}}{\Res} X
\end{equation} 
With this convention, formally applying property \eqref{eqn:symm pair 1} would give:
$$
\langle W^{(k)}_\circ(x), X^- \rangle = (q^2-1)^k \left [n(q^{-1}-q) \right]^{1-k} \int_{|z_1|\ll ... \ll |z_k|} F(x|z_1,...,z_k)  \text{Tr}_{V^{\otimes k}} \Big[ (12) ... (1k)  
$$
\begin{equation}
\label{eqn:qat}
\left. A_{1\circ} D_1^{-1}...D_k^{-1} R_{k1}(p^{2-2k})...R_{21} (p^{-2})  \frac {X(z_1,...,z_k)}{\prod_{1 \leq i < j \leq k} f(z_i/z_j)} \right]
\end{equation}
where $\text{Tr}_{V^{\otimes k}}$ denotes the trace over the factors of $V$ with labels $1,...,k$. Then formula \eqref{eqn:cov} implies that \eqref{eqn:qat} is: 
$$
(q^2-1)^k \left[n(q^{-1}-q) \right]^{1-k} \text{Tr}_{V^{\otimes k}} \Big[  (12) ... (1k) A_{1\circ} D_1^{-1}...D_k^{-1} R_{k1}(p^{2-2k})...R_{21} (p^{-2}) 
$$
$$
\left. \underset{z_i = \frac 1{xp^{2i-2}} }{\Res} \frac {X}{\prod_{i<j} f \left( \frac {z_i}{z_j} \right)} \right] = (q^2-1)^k n^{1-k} \text{Tr}_{V^{\otimes k}} \left[ A_{1\circ} X_1^{(k)} \left(\frac 1x\right) \right] = (q^2-1)^k X_\circ^{(k)} \left(\frac 1x\right)^\dagger
$$
as expected from Proposition \ref{prop:pair w}. In fact, the latter computation is the only reason we make the ``wrong assumption" \eqref{eqn:wrong}. Formulas \eqref{eqn:symm pair 1} and \eqref{eqn:wrong} imply:
\begin{equation}
\label{eqn:lhs}
\left \langle R_{\circ \bullet} \left(\frac {xp^{2k}}{yp^{2k'}} \right) W^{(k)}_\circ(x) R_{\bullet \circ} \left(\frac {yp^{2k'}}x \right) W^{(k')}_\bullet(y), X^- \right \rangle \prod_{i=k'-k+1}^{k'-1} f \left(\frac x{y p^{2i}} \right) = 
\end{equation}
$$
= (q^2-1)^{k+k'} \int_{|z_1|\ll ... \ll |z_{k+k'}|} F(x|z_1,...,z_k)F(y|z_{k+1},...,z_{k+k'})   \text{Tr}_{V^{\otimes k+k'}} \left[ R_{\circ \bullet} \left(\frac {xp^{2k}}{yp^{2k'}} \right) \right.
$$
$$
 \prod_{i=k}^1 \prod_{j=k+1}^{k+k'} R_{ij} \left( \frac {z_i}{z_j} \right) \prod_{i=2}^k (1i) A_{1\circ} \prod_{i=k}^2 R_{i1}(p^{2-2i}) \prod_{i=1}^k D_i^{-1} R_{\bullet \circ} \left(\frac {yp^{2k'}}x \right) \prod_{i=1}^k \prod_{j=k+k'}^{k+1} \tR^+_{ij} \left(\frac {z_i}{z_j} \right) 
$$
$$
\prod_{i=2}^{k'} (k+1,k+i) A_{k+1,\bullet} \prod_{i=k'}^{2} R_{k+i,k+1}(p^{2-2i}) \prod_{i=1}^{k'} D_{k+i}^{-1}  \prod_{i=k'-k+1}^{k'-1} f \left(\frac x{y p^{2i}} \right) \left. \frac {X(z_1,...,z_{k+k'})}{\prod_{i < j} f \left( \frac {z_i}{z_j} \right)} \right] 
$$
The formal property \eqref{eqn:cov} implies that \eqref{eqn:lhs} equals the $|x| \gg |y|$ expansion of: 
\begin{equation}
\label{eqn:theta}
(q^2-1)^{k+k'} \left[n(q^{-1}-q) \right]^{2-k-k'} \text{Tr}_{V^{\otimes k+k'}} \left[ R_{\circ \bullet} \left(\frac {xp^{2k}}{yp^{2k'}} \right) \right. \prod_{i=k}^1 \prod_{j=1}^{k'} R_{i,k+j} \left( \frac {y p^{2j}}{x p^{2i}} \right) 
\end{equation}
$$
\prod_{i=2}^k (1i) A_{1\circ} \prod_{i=k}^2 R_{i1}(p^{2-2i}) \prod_{i=1}^k D_i^{-1} R_{\bullet \circ} \left(\frac {yp^{2k'}}x \right) \prod_{i=1}^k \prod_{j=k'}^{1} \tR^+_{i,k+j} \left(\frac {yp^{2j}}{xp^{2i}} \right) \prod_{i=2}^{k'} (k+1,k+i) 
$$
$$
A_{k+1,\bullet} \prod_{i=k'}^{2} R_{k+i,k+1}(p^{2-2i}) \prod_{i=1}^{k'} D_{k+i}^{-1}  \prod_{i=k'-k+1}^{k'-1} f \left(\frac x{y p^{2i}} \right) \left. \text{Res} \left( \frac {X(z_1,...,z_{k+k'})}{\prod_{1\leq i < j \leq k+k'} f \left( \frac {z_i}{z_j} \right)} \right) \right] =: \Theta
$$
where $\Res$ will henceforth denote the residue at the collection:
\begin{equation}
\label{eqn:big residue}
z_1 = \frac 1{x},...,z_k = \frac 1{xp^{2k-2}}, z_{k+1} = \frac 1{y},..., z_{k+k'} = \frac 1{yp^{2k'-2}}
\end{equation}
A hefty but straightforward computation, which we will do at the end of the proof, claims that:
\begin{equation}
\label{eqn:hefty}
\Theta = (q^2-1)^{k+k'} \cdot X^{(k,k')}_{\circ \bullet} \left( \frac 1x, \frac 1y \right)^{\dagger_\circ \dagger_\bullet}
\end{equation}
which concludes the proof of \eqref{eqn:expand 1}, subject to the wrong assumption \eqref{eqn:wrong}. Now let us explain why we do not actually need the wrong assumption, and adapt the argument above to give a correct proof of \eqref{eqn:expand 1}. The left-hand side of \eqref{eqn:lhs} is a sum of terms of the form:
\begin{equation}
\label{eqn:hefty 2}
\sum_{1\leq i,j,i',j' \leq n}^{d,d'} \frac {E^{(\circ)}_{ij}}{x^d} \otimes \frac {E^{(\bullet)}_{i'j'}}{{y}^{d'}} \sum_{(u,v), (u',v') \in \zzz} \gamma \cdot \Big \langle W_{uv}^{(k)} W_{u'v'}^{(k')}, X^- \Big \rangle
\end{equation}
for various coefficients $\gamma$. By \eqref{eqn:bialg 1}, the pairing in the formula above equals:
\begin{equation}
\label{eqn:hefty 3}
\Big \langle W_{u'v'}^{(k')} \otimes W_{uv}^{(k)} , \Delta(X^-) \Big \rangle = \Big \langle W_{u'v'}^{(k')} \otimes W_{uv}^{(k)} , \text{RHS of \eqref{eqn:coproduct}} \Big \rangle
\end{equation}
By definition, the RHS of \eqref{eqn:coproduct} is simply $X$ itself with its variables expanded in the range $|z_1|, ..., |z_k| \ll |z_{k+1}|,...,|z_{k+k'}|$ and multiplied with the various coefficients of the series $S^-$, $T^-$ on the left and on the right, respectively. Then we can compute \eqref{eqn:hefty 3} by simply invoking \eqref{eqn:pairing w} twice. It is straightforward to see that the answer is given by the expression \eqref{eqn:theta} (the various double products of $R$'s and $\tR^+$'s are produced when pairing with the coefficients of the series $S^-$, $T^-$, see \cite{Tale}). In other words, while computationally involved and almost impossible to write down compactly, it is algorithmically clear how to show that \eqref{eqn:hefty 2} equals \eqref{eqn:theta}. So we are left with proving \eqref{eqn:hefty}, which we will do by replacing the residue in the definition of $\Theta$ with the operator prescribed by Figure 4. We have:
$$
\Theta = (q^2-1)^{k+k'} n^{2-k-k'} \text{Tr}_{V^{\otimes k+k'}} \left[ R_{\circ \bullet} \left(\frac {xp^{2k}}{yp^{2k'}} \right) \right. \prod_{i=k}^1 \prod_{j=1}^{k'} R_{i,k+j} \left( \frac {y p^{2j}}{x p^{2i}} \right) \prod_{i=2}^k (1i) A_{1\circ} 
$$
$$
\prod_{i=k}^2 R_{i1}(p^{2-2i}) \prod_{i=1}^k D_i^{-1} R_{\bullet \circ} \left(\frac {yp^{2k'}}x \right) \prod_{i=1}^k \prod_{j=k'}^{1} \tR^+_{i,k+j} \left(\frac {yp^{2j}}{xp^{2i}} \right) \prod_{i=2}^{k'} (k+1,k+i) A_{k+1,\bullet}
$$
$$
\prod_{i=k'}^{2} R_{k+i,k+1}(p^{2-2i}) \prod_{i=1}^{k'} D_{k+i}^{-1} \prod_{i=1}^{k+k'} D_i \prod_{i=2}^k R_{1i}(p^{2i-2}) \prod_{i=2}^{k'} R_{k+1,k+i}(p^{2i-2}) \prod_{i=2}^{k'} R_{1,k+i} \left(\frac {yp^{2i-2}}x\right)
$$
$$
\left. \frac {X^{(k,k')}_{1,k+1} \left(\frac 1x, \frac 1y \right)  }{\prod_{i=1-k}^{k'-k} f\left(\frac {x}{yp^{2i}} \right) \prod_{i=1}^{k'-1} f(p^{2i}) \prod_{i=1}^{k-1} f(p^{2i})}  \prod_{i=k'}^2 R_{k+i,1} \left(\frac {xp^{2k}}{yp^{2i-2}}\right) \prod_{i=k}^2 (1i) \prod_{i=k'}^{2} (k+1,k+i)  \right]
$$	
By the cyclic property of the trace and formula \eqref{eqn:unitary}, the expression above equals:
$$
\Theta = (q^2-1)^{k+k'} n^{2-k-k'} \text{Tr}_{V^{\otimes k+k'}} \left[ \prod_{i=k'}^2 R_{k+i,1} \left(\frac {xp^{2k}}{yp^{2i-2}}\right) \prod_{i=k}^2 (1i) \right. 
$$
$$
\prod_{i=k'}^{2} (k+1,k+i) R_{\circ \bullet} \left(\frac {xp^{2k}}{yp^{2k'}} \right) \prod_{i=k}^1 \prod_{j=1}^{k'} R_{i,k+j} \left( \frac {y p^{2j}}{x p^{2i}} \right) \prod_{i=2}^k (1i) A_{1\circ} \squiggly{\prod_{i=k}^2 R_{i1}(p^{2-2i})} 
$$
$$
\prod_{i=1}^k D_i^{-1}  R_{\bullet \circ} \left(\frac {yp^{2k'}}x \right) \prod_{i=1}^k \prod_{j=k'}^{1} \tR^+_{i,k+j} \left(\frac {yp^{2j}}{xp^{2i}} \right) \prod_{i=2}^{k'} (k+1,k+i) A_{k+1,\bullet} \prod_{i=1}^{k} D_i 
$$
$$
\left. \squiggly{\prod_{i=2}^k R_{1i}(p^{2i-2})} \prod_{i=2}^{k'} R_{1,k+i} \left(\frac {yp^{2i-2}}x\right) \frac {X^{(k,k')}_{1,k+1} \left(\frac 1x, \frac 1y \right) }{\prod_{i=1 - k}^{k'-k} f\left(\frac {x}{yp^{2i}} \right) \prod_{i=1}^{k-1} f(p^{2i})}  \right]
$$
In the formula above, we can send the products with squiggly underlines toward each other, where they will mutually annihilate due to \eqref{eqn:unitary}. In the process, we will re-order some of the $\tR^+$ terms due to the Yang-Baxter equation \eqref{eqn:ybe} (and we also need to use the fact that $R_{ij}$ and $\tR_{ij}$ commute with the operator $D_iD_j$): 
$$
\Theta = (q^2-1)^{k+k'} n^{2-k-k'} \text{Tr}_{V^{\otimes k+k'}} \left[ \prod_{i=k'}^2 R_{k+i,1} \left(\frac {xp^{2k}}{yp^{2i-2}}\right) \squiggly{\prod_{i=k}^2 (1i)} \right. 
$$
$$
\squiggly{\prod_{i=k'}^{2} (k+1,k+i)} R_{\circ \bullet} \left(\frac {xp^{2k}}{yp^{2k'}} \right) \prod_{i=k}^1 \prod_{j=1}^{k'} R_{i,k+j} \left( \frac {y p^{2j}}{x p^{2i}} \right) \squiggly{\prod_{i=2}^k (1i)} A_{1\circ}  R_{\bullet \circ} \left(\frac {yp^{2k'}}x \right)
$$
$$
\prod_{j=k'}^{1} \prod_{i=2}^k \left[ D_i^{-1} \tR^+_{i,k+j} \left(\frac {yp^{2j}}{xp^{2i}} \right) D_i \right]  \prod_{j=k'}^{1} \left[ D_1^{-1} \tR^+_{1,k+j} \left(\frac {yp^{2j}}{xp^{2}} \right) D_1 \right] 
$$
$$
\left. \squiggly{\prod_{i=2}^{k'} (k+1,k+i)} A_{k+1,\bullet}\prod_{i=2}^{k'} R_{1,k+i} \left(\frac {yp^{2i-2}}x\right) \frac {X^{(k,k')}_{1,k+1} \left(\frac 1x, \frac 1y \right)}{\prod_{i=1-k}^{k'-k} f\left(\frac {x}{yp^{2i}} \right)} \right]
$$
Now we will send the permutations with squiggly underlines toward each other, where they will mutually annihilate. However, this process will change the indices on all tensors between squiggly lines, and we obtain:
$$
\Theta = (q^2-1)^{k+k'} n^{2-k-k'} \text{Tr}_{V^{\otimes k+k'}} \left[  R_{\circ \bullet} \left(\frac {xp^{2k}}{yp^{2k'}} \right) \prod_{i=k'}^2 R_{k+i,1} \left(\frac {xp^{2k}}{yp^{2i-2}}\right) \right. 
$$
$$
 \prod_{j=2}^{k'} R_{1,k+j} \left( \frac {y p^{2j-2}}{x p^{2k}} \right)  R_{1,k+1} \left( \frac {y p^{2k'}}{x p^{2k}} \right) \prod_{j=2}^{k'} \prod_{i=k}^2  R_{i,k+j} \left( \frac {y p^{2j-2}}{x p^{2i-2}} \right) \prod_{i=k}^2  R_{i,k+1} \left( \frac {y p^{2k'}}{x p^{2i-2}} \right) 
$$
$$
A_{1\circ}  R_{\bullet \circ} \left(\frac {yp^{2k'}}x \right) \prod_{i=2}^k \left[ D_i^{-1} \tR^+_{i,k+1} \left(\frac {yp^{2k'}}{xp^{2i}} \right) D_i \right] \prod_{j=k'}^{2} \prod_{i=2}^k \left[ D_i^{-1} \tR^+_{i,k+j} \left(\frac {yp^{2j-2}}{xp^{2i}} \right) D_i \right]
$$
$$
\left[ D_1^{-1} \tR^+_{1,k+1} \left(\frac {yp^{2k'}}{xp^{2}} \right) D_1 \right] \prod_{j=k'}^{2} \left[ D_1^{-1} \tR^+_{1,k+j} \left(\frac {yp^{2j-2}}{xp^{2}} \right) D_1 \right]  A_{k+1,\bullet} \prod_{i=2}^{k'} R_{1,k+i} \left(\frac {yp^{2i-2}}x\right)
$$
\begin{equation}
\label{eqn:theta 2}
\left.  \cdot \frac {X^{(k,k')}_{1,k+1} \left(\frac 1x, \frac 1y \right)}{\prod_{i=1-k}^{k'-k} f\left(\frac {x}{yp^{2i}} \right)} \right]
\end{equation}
Recall the explicit formulas \eqref{eqn:r} and \eqref{eqn:tr plus}, which imply the identity:
\begin{equation}
\label{eqn:property}
\left[ D_1^{-1} \tR^+_{12} (z) D_1 \right] ^{\dagger_1} R_{12} (zp^2)^{\dagger_1} = \text{Id}_{V \otimes V}
\end{equation}
where $\dagger_1$ is transposition in the first tensor factor. As a consequence, we have:
\begin{equation}
\label{eqn:elem 1} 
\text{Tr}_{V \otimes V} \left( A_{23...} R_{12}(zp^2) B_{13...} \Big[ D_1^{-1} \tR^+_{12}(z) D_1 \Big]\right) = \text{Tr}_{V \otimes V}(A_{23...} B_{13...})
\end{equation}
where the trace is only taken over the tensor factors 1 and 2. In other words, formula \eqref{eqn:elem 1} states that we can ``cancel" factors $R_{12}(zp^2)$ and $D_1^{-1} \tR^+_{12}(z) D_1$ from any trace, as long as there are no terms with index $2$ in the interval between these factors and no terms with index $1$ outside the interval between these factors. By applying formula \eqref{eqn:elem 1}, we can thus cancel factors in \eqref{eqn:theta} pairwise, and obtain: 
$$
\Theta = (q^2-1)^{k+k'} n^{2-k-k'}\text{Tr}_{V^{\otimes k+k'}} \left[  R_{\circ \bullet} \left(\frac {xp^{2k}}{yp^{2k'}} \right) R_{1,k+1} \left( \frac {y p^{2k'}}{x p^{2k}} \right) \right. 
$$
\begin{equation}
\label{eqn:theta 3}
A_{1\circ}  R_{\bullet \circ} \left(\frac {yp^{2k'}}x \right) \left[ D_1^{-1} \tR^+_{1,k+1} \left(\frac {yp^{2k'}}{xp^{2}} \right) D_1 \right]  A_{k+1,\bullet} \left. \frac {X^{(k,k')}_{1,k+1} \left(\frac 1x, \frac 1y \right)}{f \left( \frac {xp^{2k}}{yp^{2k'}} \right)} \right]
\end{equation}
As a consequence of \eqref{eqn:property}, we have the elementary identity:
\begin{equation}
\label{eqn:elem 2}
A_{1 \circ} R_{\circ \bullet}(zp^2)^{\dagger_\circ \dagger_\bullet} \Big[ D_1^{-1} \tR^+_{12}(z) D_1 \Big] A_{2\bullet} =  A_{1 \circ} A_{2\bullet}
\end{equation}
while as a consequence of \eqref{eqn:unitary}, we have the elementary identity:
\begin{equation}
\label{eqn:elem 3}
R_{\bullet \circ} \left(\frac 1z \right)^{\dagger_\circ \dagger_\bullet} R_{12} \left(z\right) A_{1 \circ} A_{2\bullet} = A_{1 \circ} A_{2\bullet} \cdot f(z)
\end{equation}
Plugging these two identities into \eqref{eqn:theta 3} yields precisely \eqref{eqn:hefty}. 	
	
\end{proof} 

\begin{proof} \emph{of Theorem \ref{thm:main 1}:} In order to show that:
\begin{equation}
\label{eqn:map}
\CP_n^\infty \rightarrow \wCA^+, \quad W_{ij}^{(k)} \rightarrow W_{ij}^{(k)}
\end{equation}
is an algebra homomorphism, we need to prove that the series \eqref{eqn:def w series} satisfy relations \eqref{eqn:w rels}. Because the pairing \eqref{eqn:pairing a} is non-degenerate, it suffices to show:
\begin{equation}
\label{eqn:hot}
\Big \langle \text{LHS of \eqref{eqn:w rels}}, X^- \Big \rangle (q^2-1)^{-k-k'} = \Big \langle \text{RHS of \eqref{eqn:w rels}}, X^- \Big \rangle (q^2-1)^{-k-k'}
\end{equation}
for any, henceforth fixed, $X^- \in \CA^-$. By \eqref{eqn:expand 1} and \eqref{eqn:expand 2}, the left-hand side of \eqref{eqn:hot} is equal to the difference between the expansions of the rational function:
\begin{equation}
\label{eqn:rat}
X^{(k,k')} \left( \frac 1x, \frac 1{y} \right)^{\dagger_1\dagger_2}
\end{equation}
at $|x|\gg|y|$ and at $|x|\ll|y|$. Therefore \eqref{eqn:identity} implies that the right-hand side of \eqref{eqn:hot} is equal to a sum over the poles of this rational function. More explicitly, Proposition \ref{prop:residues} implies that the right-hand side of \eqref{eqn:hot} is equal to:
$$
- \sum_{a \in \{-k',...,k-k'-1\} \sqcup \{1,...,k\}} \text{sgn}(a) \cdot \delta \left(\frac {x}{y} \cdot p^{2a} \right) \cdot \underset{y = x p^{2a}}{\text{Res}} \left[ X^{(k,k')} \left( \frac 1x, \frac 1{y} \right)^{\dagger_1\dagger_2} \right]
$$
Therefore, all that remains to show is that the residue above matches the pairing of the last row of \eqref{eqn:w rels} with $X^-$. Let us perform the computation for $a > 0$, as the case $a < 0$ is analogous, and we leave it to the interested reader. Formula:
$$
(q^{-1}-q) (12) \cdot \left \langle W^{(k'+a)}_1(x) R_{21} \left(p^{2k}\right) W^{(k-a)}_2(x p^{2a}) \prod_{i=1}^{k-1} f \left( p^{-2i} \right), X^- \right \rangle
$$
equals by \eqref{eqn:expand 2} (transposed, with indices swapped $1 \leftrightarrow 2$ and variables swapped $x \leftrightarrow y^{-1}$):
$$
(q^{-1} - q) (12)  X_{21}^{(k-a,k'+a)}\left( \frac 1{xp^{2a}}, \frac 1x \right)^{\dagger_1\dagger_2} R_{21} \left( p^{2a} \right) \prod_{i=1}^{a-1} f(p^{-2i})
$$
Due to \eqref{eqn:res 1}, the formula above is minus the residue of \eqref{eqn:rat} at $y = x p^{2a}$. This completes the proof of the fact that the map \eqref{eqn:map} is an algebra homomorphism. \\

\noindent Finally, we claim that the map \eqref{eqn:map} is an isomorphism because both $\CP_n^\infty $and $\wCA^+$ have linear bases (as completions) consisting of products:
\begin{equation}
\label{eqn:up}
W_{i_1j_1}^{(k_1)} ... W_{i_tj_t}^{(k_t)} \quad \text{where} \quad \frac {i_1-j_1}{k_1} \geq ... \geq \frac {i_t-j_t}{k_t}
\end{equation}
In the case of $\CP_n^\infty$, this is due to the definition \eqref{eqn:def p}, while in the case of $\wCA^+$, this is due to \eqref{eqn:linear basis 1} and the fact that the collection of elements \eqref{eqn:up} are upper triangular expressions in the collection of elements:
$$
F_{i_1j_1}^{(k_1)} ... F_{i_tj_t}^{(k_t)} \quad \text{where} \quad \frac {i_1-j_1}{k_1} \geq ... \geq \frac {i_t-j_t}{k_t}
$$
(this is a consequence of the definition \eqref{eqn:def w}, proved akin to formula (3.47) of \cite{W surf}). 

\end{proof}

\section{The second shuffle algebra}
\label{sec:shuf 2}

\subsection{} In the present Section, we will take \eqref{eqn:triangular 1} as the definition of $\UU$ (although we refer the reader to Definition 3.12 of \cite{Tale} for a review of the original definition in our conventions). Then \eqref{eqn:triangular 2} states that there exists an embedding:
\begin{equation}
\label{eqn:key 1}
\phi : \CA^+ \hookrightarrow \UU 
\end{equation}
In \cite{Par}, we defined a family of elements:
\begin{equation}
\label{eqn:key 2}
\tilde{W}_{ij}^{(k)} \in \wUU
\end{equation}
\footnote{The elements denoted by $W_{ij}^{(k)}$ in \loccit are our $\tilde{W}_{ij}^{(k)} (-1)^k p^{\frac {2(k-1)(j-i)+2(k-1)\bari+2\barj}n}$} where the completion $\wUU$ is such that it matches $\wCA^+$ under the map \eqref{eqn:key 1}. \\

\begin{proposition}
\label{prop:in fact}

We have $\phi \left( W_{ij}^{(k)} \right) = \tilde{W}_{ij}^{(k)}$ for all $(i,j) \in \zzz$ and $k \in \BN$. \\

\end{proposition}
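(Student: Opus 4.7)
The plan is to exploit the non-degeneracy of the bialgebra pairing \eqref{eqn:pairing a}, thereby reducing the identity $\phi(W_{ij}^{(k)}) = \tilde{W}_{ij}^{(k)}$ to the equality of two linear functionals on $\CA^-$.

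First I would recall from \cite{Par} the precise definition of the element $\tilde{W}_{ij}^{(k)} \in \wUU$. In \loccit\ these elements are built via an explicit (shuffle-theoretic) construction whose main computational content is an analogue of Proposition \ref{prop:pair w}. Because the completion $\wUU$ is designed so as to match $\wCA^+$ under $\phi$, the pairing \eqref{eqn:pairing a} extends to $\wUU \otimes \CA^-$, and the computation in \loccit\ amounts, after absorbing the normalization factor $(-1)^k p^{\frac{2(k-1)(j-i)+2(k-1)\bari+2\barj}n}$ recorded in the footnote attached to \eqref{eqn:key 2}, to the assertion that
\begin{equation*}
\Big\langle \tilde{W}^{(k)}(y), X^- \Big\rangle = (q^2-1)^k \cdot X^{(k)}(1/y)^\dagger \qquad \text{for all } X^- \in \CA_{-k}.
\end{equation*}
This is exactly formula \eqref{eqn:pairing w}, so $\phi(W_{ij}^{(k)})$ and $\tilde{W}_{ij}^{(k)}$ induce the same linear functional on $\CA^-$.

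Second I would conclude by non-degeneracy. The pairing \eqref{eqn:pairing a} is perfect on each pair $(\CA_k^+, \CA_{-k}^-)$, and it extends to a well-defined and non-degenerate pairing between $\wCA^+$ (viewed as in \eqref{eqn:completion}) and $\bigcup_{k \geq 0} \CA_{-k}^-$: by construction, an element of the completion pairs with any fixed $X^- \in \CA_{-k}^-$ via only finitely many nonzero contributions, and two elements of $\wCA^+$ that induce the same functional on $\CA^-$ must agree term by term in the slope-ordered decomposition \eqref{eqn:linear basis 1}. Hence $\phi(W_{ij}^{(k)}) = \tilde{W}_{ij}^{(k)}$.

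The hard part will be the normalization bookkeeping in the first step. The coefficient $p^{\frac{2(\alpha(v)-(k-1)\bari-\barj)}n}$ inserted in Definition \ref{def:series}, together with the overall factor $(q^2-1)^k$ in Proposition \ref{prop:pair w}, was engineered precisely so that the pairing assumes the clean form above; verifying that the combinatorial weight $\alpha(v)$ lines up with the corresponding factor in \cite{Par} is a concrete but intricate exercise, essentially the same bookkeeping that underlies the dictionary between the shuffle-algebra elements $F_{ij}^{(k)}$ and the Drinfeld-style root generators of $\UU$ used throughout \loccit.
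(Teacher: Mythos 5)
Your overall strategy---compute both sides against the non-degenerate pairing \eqref{eqn:pairing a} and then invoke non-degeneracy---is a legitimate shape of argument (the paper itself uses exactly this template to prove Theorem \ref{thm:main 1}), but it is not the route the paper takes for Proposition \ref{prop:in fact}, and the crucial step of your plan is left unverified. The paper's actual proof is a direct, formula-level identification: it applies $\phi$ to the definition \eqref{eqn:def w} of $W_{ij}^{(k)}$, rewrites each $\phi(F_{c_{s-1}c_s}^{(k_s)})$ in terms of the shuffle-algebra generators $\bA_{\pm[i;j)}^{(k)}$ and the Cartan elements via Theorem \ref{thm:two}, regroups the ordered slope-sum according to where the zero-slope factor sits, and then collapses the two remaining ordered products of $\bA$'s into the generators $A_{[s;i),a}$ and $A_{-[s;j),c}$ appearing in \eqref{eqn:tilde w} by means of the telescoping combinatorial identities \eqref{eqn:eq plus}--\eqref{eqn:eq minus}, which are proved separately.

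The genuine gap in your proposal is the assertion that ``the computation in \cite{Par} amounts to'' the pairing formula $\langle \tilde W^{(k)}(y), X^-\rangle = (q^2-1)^k X^{(k)}(1/y)^\dagger$. Nothing in the present paper records or cites such a pairing statement for $\tilde W_{ij}^{(k)}$: the elements \eqref{eqn:tilde w} are given in \cite{Par} by an explicit shuffle-theoretic expression (designed for the $K$-theoretic action, not for the pairing with $\CA^-$), and the pairing formula \eqref{eqn:pairing w} is established in \emph{this} paper only for the elements \eqref{eqn:def w}, after they have already been defined independently. If you want to run the non-degeneracy argument, you must actually \emph{prove} that $\tilde W^{(k)}(y)$ satisfies the same pairing identity, and doing so would mean unwinding \eqref{eqn:tilde w} term-by-term against \eqref{eqn:symm pair 1}--\eqref{eqn:symm pair 2}---an amount of work comparable to, and arguably harder than, the direct expansion the paper performs. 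As it stands, you have reduced the Proposition to an unestablished claim about \cite{Par}, which you yourself flag as ``the hard part,'' so the proof is incomplete at precisely the point where the content lies.
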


\noindent The Proposition above will be proved at the end of the present Section, after we introduce the key notions that appear in \eqref{eqn:key 1} and \eqref{eqn:key 2}. \\

\begin{proof} \emph{of Theorem \ref{thm:main 2}:} By Theorem \ref{thm:main 1}, we have an algebra isomorphism
$$
\CP_n^\infty \cong \wCA^+
$$
where the elements denoted $W_{ij}^{(k)}$ in the LHS and the RHS correspond to each other. Therefore, we also have an isomorphism of the quotients:
$$
\CP_n^r \cong \wCA^+ \Big / \left(W_{ij} \right)_{(i,j) \in \zzz}^{k > r}
$$
Combining the isomorphism above with \eqref{eqn:key 1} and Proposition \ref{prop:in fact} yields:
$$
\CP_n^r \cong \phi ( \wCA^+ ) \Big / \left(\tilde{W}_{ij} \right)_{(i,j) \in \zzz}^{k > r}
$$
It was shown in Theorem 1.2 of \cite{Par} that the algebra in the RHS above acts on the $K$--theory groups that appear in the RHS of \eqref{eqn:k-theory}. Therefore, so does $\CP_n^r$.
	
\end{proof} 

\subsection{} 
\label{sub:shuf 1}

We will now define the shuffle algebras that appear in \eqref{eqn:triangular 1}. Consider variables $z_{ia}$ of color $i$, for various $i \in \{1,...,n\}$ and $a \in \BN$. We call a rational function:
$$
R(z_{11}...,z_{1d_1},...,z_{n1}...,z_{nd_n})
$$
color-symmetric if it is symmetric in the variables $z_{i1},...,z_{id_i}$ for all $i \in \{1,...,n\}$. Depending on the context, the symbol ``Sym" will refer to either color-symmetric functions in variables $z_{ia}$, or to the symmetrization operation:
$$
\sym \ F(...,z_{i1},...,z_{id_i},...) = \sum_{(\sigma^1,...,\sigma^n) \in S(d_1) \times ... \times S(d_n)} F(...,z_{i,\sigma^i(1)},...,z_{i,\sigma^i(d_i)},...)
$$
Let $\bd! = d_1!...d_n!$. The following construction arose in the context of quantum groups in \cite{E}, by analogy to the work of Feigin-Odesskii on certain elliptic algebras. \\

\begin{definition}
\label{def:shuf classic}
	
Consider the vector space:
\begin{equation}
\label{eqn:shuf classic}
\bigoplus_{\bd = (d_1,...,d_n) \in \nn} \fff(...,z_{i1},...,z_{id_i},...)^{\emph{Sym}}
\end{equation}
and endow it with an associative algebra structure, by setting $X*Y$ equal to:
$$
\esym \left[ \frac {X(...,z_{i1},...,z_{id_i},...)}{\bd!} \frac {Y(...,z_{i,d_i+1},...,z_{i,d_i+d'_i},...)}{\bd'!} \prod_{i,i' = 1}^n \prod^{1 \leq a \leq d_i}_{d_{i'} < a' \leq d_{i'}+d'_{i'}} \zeta \left( \frac {z_{ia}}{z_{i'a'}} \right) \right] 
$$
Let $\CS^\pm$ be the subset of \eqref{eqn:shuf classic} consisting of rational functions which have at most simple poles at $z_{ia}q^2 - z_{i+1,b}$ (for all applicable indices $i, a, b$), such that the residue at such a pole is divisible by $z_{ia'} - z_{i+1,b}$ and $z_{ia} - z_{i+1,b'}$ for all $a \neq a'$ and $b \neq b'$ (by convention, in the discussion above we set $z_{n+1,c} = z_{1c} \oq^{-2}$, $z_{0c} = z_{nc} \oq^2, \forall c$). \\
	
\end{definition}

\noindent Note that $\CS^+$ and $\CS^-$ are identical subalgebras of \eqref{eqn:shuf classic}. The reason why we make a distinction in notation is that we choose to grade these algebras differently, by:
\begin{equation}
\label{eqn:degree}
\deg R^\pm = (\pm \bd, k) \in \zz \times \BZ
\end{equation}
where $R^\pm$ is the element of $\CS^\pm$ corresponding to a rational function $R$ in the $\bd$--th direct summand of \eqref{eqn:shuf classic}, which has total homogeneous degree $k$. We will refer to the components of the degree vector as ``horizontal" and ``vertical" degree, respectively:
\begin{align*} 
&\hdeg R^\pm = \pm \bd \\
&\vdeg R^\pm = k
\end{align*} 
for any $R^\pm$ whose degree is of the form \eqref{eqn:degree}. \\

\subsection{} Let us define:
\begin{equation}
\label{eqn:cartan}
\CS^0 = \uui^{\otimes n} = \fff \Big \langle \psi_{i,k}^\pm, c, \barc \Big \rangle^{k \in \BN \sqcup 0}_{i \in \BZ} \Big/ \text{relations} 
\end{equation}
where the $\psi_{i,k}^\pm$ generate a rank $n$ deformed Heisenberg algebra with central charge $\barc$, and are connected by the following identifications, for all $i \in \BZ$ and $k \in \BN \sqcup 0$:
$$
\psi_{i,0}^+ = (\psi_{i,0}^-)^{-1} =: \psi_i \qquad \text{and} \qquad \psi_{i+n,k}^\pm = c^{\pm 1} \oq^{\mp 2k} \psi_{i,k}^\pm
$$
When $\barc = 1$, all the generators in \eqref{eqn:cartan} commute. For general $\barc$, the defining relations in \eqref{eqn:cartan} can be found in (3.38)--(3.40) of \cite{Tale}, where we used the notation:
$$
\ph^\pm_i(z) = \frac {\psi^\pm_{i+1}(zq^2)}{\psi_i^\pm (z)}
$$
where $\psi_i^\pm(z) = \sum_{k=0}^\infty \frac {\psi_{i,k}^\pm}{z^{\pm k}}$. \\

\begin{definition}
	
Consider the double shuffle algebra:
\begin{equation}
\label{eqn:whole}
\CS = \CS^+ \otimes \CS^0 \otimes \CS^{-,\eop} \Big / \text{relations}
\end{equation}
The specific relations we impose between the three factors of \eqref{eqn:whole} can be found in Subsection 3.14 of \cite{Tale}, but their precise form will not be important to us. \\

\end{definition}

\noindent In the present paper, we will take \eqref{eqn:whole} as the definition of $\UU$. However, using the standard definition of the quantum toroidal algebra of $\fgl_n$, the fact that it is isomorphic to $\CS$ as defined above is a theorem that can be found in \cite{Tor}. \\

\subsection{}
\label{sub:uuu}

For any choice of $(i<j) \in \zzz$ and $\mu \in \BQ$ such that $\frac {j-i}{\mu} \in \BZ$, we define:
\begin{align} 
&\bA^\mu_{\pm [i;j)} = \sym \left[ \frac {\prod_{a=i}^{j-1} (z_a \oq^{\frac {2a}n})^{\left \lfloor \frac {a-i+1}{\mu} \right \rfloor - \left \lfloor \frac {a-i}{\mu} \right \rfloor}}{(-q^2\oq^{\frac 2n})^{j-i}\left(1 - \frac {z_{i+1}}{z_i q^2}\right) ... \left(1 - \frac {z_{j-1}}{z_{j-2} q^2}\right)} \prod_{i\leq a < b < j} \zeta \left( \frac {z_b}{z_a} \right)  \right] \in \CS^\pm \label{eqn:shuf a} \\
&\bB^\mu_{\pm [i;j)} =\sym \left[ \frac {\prod_{a=i}^{j-1} (z_a \oq^{\frac {2a}n})^{\left \lceil \frac {a-i+1}{\mu} \right \rceil - \left \lceil \frac {a-i}{\mu} \right \rceil}}{(-\oq^{\frac 2n})^{i-j}\left(1 - \frac {z_i}{z_{i+1}}\right) ... \left(1 - \frac {z_{j-2}}{z_{j-1}}\right)} \prod_{i\leq a < b < j} \zeta \left( \frac {z_a}{z_b} \right)  \right] \in \CS^\pm \label{eqn:shuf b}
\end{align}
In order to think of the RHS of \eqref{eqn:shuf a} and \eqref{eqn:shuf b} as elements of the shuffle algebra, we relabel the variables $z_i,...,z_{j-1}$ according to the following rule $\forall a \in \{i,...,i+n-1\}$:
$$
z_a,z_{a+n},z_{a+2n}, ... \quad \leadsto \quad z_{\bar{a}1}, z_{\bar{a}2} \oq^{-2}, z_{\bar{a}3} \oq^{-4},...
$$
and thus the RHS of \eqref{eqn:shuf a} and \eqref{eqn:shuf b} are interpreted as elements of the vector space \eqref{eqn:shuf classic}. If $\frac {j-i}{\mu} \notin \BZ$, the LHS of \eqref{eqn:shuf a} and \eqref{eqn:shuf b} are defined to be 0. We will write:
$$
\bA_{\pm [i;j)}^{(\pm k)} = \bA_{\pm [i;j)}^\mu \qquad \text{and} \qquad  \bB_{\pm [i;j)}^{(\pm k)} = \bB_{\pm[i;j)}^\mu
$$
where $k = \frac {j-i}{\mu}$, in order to emphasize the fact that: 
$$
\deg \bA_{\pm [i;j)}^{(k)} = \deg \bB_{\pm [i;j)}^{(k)} = (\pm[i;j),k)
$$
Consider the following subalgebras of $\CS$: \\
	
\begin{itemize}
		
\item $\CS^\uparrow$ generated by $\bA_{\pm [i;j)}^{(k)}$ and $\psi_{i,k}^+$ for all $k>0$ and $(i < j) \in \zzz$ \\
		
\item $\CS^\infty$ generated by $\bA_{[i;j)}^{(0)}$ and $\bB_{-[i;j)}^{(0)}$ for all $(i<j) \in \zzz$ \\
		
\item $\CS^\downarrow$ generated by $\bB_{\pm [i;j)}^{(k)}$ and $\psi_{i,k}^-$ for all $k>0$ and $(i < j) \in \zzz$ \\	
		
\end{itemize}

\noindent It was shown in \cite{PBW} that the subalgebras $\CS^\uparrow$ and $\CS^\downarrow$ have linear bases indexed by products of the aforementioned generators, taken in order of $\frac {j-i}k$. Moreover, we have an isomorphism of vector spaces:
\begin{equation}
\label{eqn:pbw}
\CS\cong \CS^\uparrow \otimes \CS^\infty \otimes \CS^\downarrow
\end{equation}

\begin{theorem} (\cite{Tale}) There is an algebra isomorphism $\phi : \CA^+  \cong \CS^\uparrow$ given by:
\begin{align*} 
&F_{ij}^{(k)} \mapsto  \bA_{-[i;j)}^{(k)} \cdot ( - q^3 \oq^{\frac 2n} )^{j-i} (-p^{\frac 2n})^k \\ 
&F_{ii}^{(k)} \mapsto \frac {\psi_{i,k}^+}{\psi_i} \cdot \oq^{\frac {2ik}n} (-1)^k \\
&F_{ji}^{(k)} \mapsto \frac 1{\psi_j} \bA_{[i;j)}^{(k)} \psi_i \cdot (-p^{\frac 2n})^k
\end{align*}
for all $(i < j) \in \zzz$ and $k > 0$. Similarly, there are isomorphisms: 
\begin{equation}
\label{eqn:latter}
\CA^- \cong \CS^\downarrow \quad \text{and} \quad \uu \cong \CS^{\infty}
\end{equation}
With this in mind, \eqref{eqn:pbw} yields the triangular decomposition \eqref{eqn:triangular 2}. \\ \label{thm:two} \\ \end{theorem}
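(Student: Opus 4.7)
The plan is to construct $\phi$ by reading off matrix coefficients of elements of $\CA^+$ as scalar-valued rational functions in ``colored'' variables, thereby landing in the color shuffle algebra $\CS^+ \otimes \uui^{\otimes n}$, and then to identify its image with $\CS^\uparrow$. Concretely, every $X_{1\ldots k}(z_1,\ldots,z_k) \in \CA^+_k$ decomposes in the basis $E_{i_1 j_1}\otimes\cdots\otimes E_{i_k j_k}$ into scalar rational functions; using the convention \eqref{eqn:notation} the $n$-adic residues $\bar i_a, \bar j_a$ reinterpret each variable $z_a$ as belonging to one of the $n$ colors of $\CS^+$. The first task is to check that this extract-and-relabel procedure converts the matrix-shuffle product \eqref{eqn:shuf prod} (with its $R_{ab}$ and $\tR^+_{ab}$ insertions) into the color-shuffle product (with its $\zeta(z_{ia}/z_{i'a'})$ factors). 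This is essentially a computation with the explicit entries of $R(x)$ from \eqref{eqn:r}, together with the observation that the poles at $z_a = z_b \oq^2$ occurring in $\tR^+$ and the wheel conditions of Figure~4 correspond exactly to the poles at $z_{ia}q^2 = z_{i+1,b}$ and the residue divisibility conditions in Definition \ref{def:shuf classic}.

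Once $\phi$ is known to be an algebra map into the color shuffle algebra, I would verify the stated formulas on generators directly: substituting the definition of $F^{(k)}_{ij}$ as a symmetrization of products of $R$, $\bQ^+$ and $E^{(a)}_{t_{a-1}t_a}$, extracting matrix entries, and relabeling variables produces exactly the symmetric sums \eqref{eqn:shuf a}--\eqref{eqn:shuf b} defining $\bA^{(k)}_{\pm[i;j)}$, up to the explicit normalization factors $(-q^3\oq^{2/n})^{j-i}(-p^{2/n})^k$ etc. The Cartan-type generators $F^{(k)}_{ii}$ are handled by matching them with the generators $\psi^+_{i,k}$ of $\CS^0$ via their action on lower-slope elements, using \eqref{eqn:psi} and the normalization from Proposition \ref{prop:pair f}. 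Bijectivity of $\phi$ onto $\CS^\uparrow$ then follows from comparing the two PBW-type linear bases: \eqref{eqn:linear basis 1} provides a basis of $\CA^+$ by slope-ordered products of $F^{(k)}_{ij}$, while \cite{PBW} provides an analogous basis of $\CS^\uparrow$ by slope-ordered products of $\bA^{(k)}_{\pm[i;j)}$, and $\phi$ carries the former basis to a scalar multiple of the latter.

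For the remaining claims, the isomorphism $\CA^- \cong \CS^\downarrow$ is proved by the same argument applied to the opposite shuffle algebra (with $\tR^-$ in place of $\tR^+$ and $p^{-1}$ in place of $\oq$), yielding the generators $\bB^{(k)}_{\pm[i;j)}$; the isomorphism $\uu \cong \CS^\infty$ is the classical statement that the RTT presentation \eqref{eqn:rtt 0}--\eqref{eqn:rtt 3} of the quantum affine group agrees with the subalgebra of the double shuffle algebra generated in vertical degree zero, for which we defer to \cite{Tor}. The triangular decomposition \eqref{eqn:triangular 2} is then an immediate transport of the PBW decomposition \eqref{eqn:pbw} through these three isomorphisms. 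The principal obstacle is the first step: the explicit matching of the two shuffle products requires a careful combinatorial analysis of how matrix entries of $R$-matrices, residues at $\oq^2$-shifted diagonals, and color labels interact, and the wheel conditions defining $\CA^+$ must be shown to be equivalent to (not merely to imply) those of $\CS^+$. This equivalence, rather than either half of it, is the technical heart of \cite{Tale}.
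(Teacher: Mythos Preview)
The paper does not give a proof of this theorem: it is quoted from \cite{Tale}, and the present paper only uses it as a black box (to transport the PBW decomposition \eqref{eqn:pbw} into \eqref{eqn:triangular 2}, and to compute $\phi(W_{ij}^{(k)})$ in Subsection~4.6). So there is no ``paper's own proof'' against which to compare your proposal.

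That said, your outline is a reasonable high-level sketch of the strategy in \cite{Tale}, and you have correctly identified the crux: the equivalence between the residue/wheel conditions defining $\CA^+$ (Definition~\ref{def:shuf aff}, Figure~4) and those defining $\CS^+$ (Definition~\ref{def:shuf classic}). One point to be careful about: your description suggests building $\phi$ by ``extracting matrix coefficients and relabeling variables by color,'' but this is not literally how the map goes. The number of variables on the two sides is different (an element of $\CA^+_k$ has $k$ variables, while its image in $\CS^\uparrow$ lives in degrees $\bd$ with $|\bd|$ typically much larger than $k$), so $\phi$ cannot be a simple coefficient extraction. In \cite{Tale} the map is instead characterized by what it does to the generators $F_{ij}^{(\pm 1)}$ (Proposition~\ref{prop:generation}) together with compatibility of the pairings and coproducts on both sides; the explicit formulas for $\phi(F_{ij}^{(k)})$ are then verified using the slope filtration and the coproduct formula \eqref{eqn:coproduct f}. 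Your PBW argument for bijectivity is correct in spirit.
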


\subsection{} Consider the following elements of the shuffle algebra, defined akin to \eqref{eqn:shuf a}:
\begin{equation}
\label{eqn:new shuf}
A_{\pm [i;j),k} = \sym \left[ \frac {-(z_i \oq^{\frac {2i}n} )^k}{\left(1 - \frac {z_i q^2}{z_{i+1}}\right) ... \left(1 - \frac {z_{j-2} q^2}{z_{j-1}}\right)} \prod_{i\leq a < b < j} \zeta \left( \frac {z_b}{z_a} \right)  \right] \in \CS^\pm
\end{equation}
for all $k > 0$ and $(i<j) \in \zzz$. We also make the following conventions: 
$$
A_{\pm [i;j),0} = 0 \qquad \text{and} \qquad A_{\pm [i;i),k} = \delta_k^0 
$$
forall $i \in \BZ/n\BZ$, $k \geq 0$ and $(i<j) \in \zzz$. \\

\begin{proposition} (\cite{Par}) We have $A_{\pm [i;j),k} \in \CS^\uparrow$ for all applicable $(i,j)$ and $k \in \BN$. \\ \end{proposition}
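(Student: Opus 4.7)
The plan is to prove the assertion by induction on the length $\ell = j-i$ of the interval, expressing $A_{\pm[i;j),k}$ as a combination of the generators of $\CS^\uparrow$, namely the $\bA_{\pm[i';j')}^{(k')}$ together with the Cartan currents $\psi_{i',k'}^+$. For the base case $\ell = 1$, the defining formula for $A_{\pm[i;i+1),k}$ collapses (there is no denominator factor, no $\zeta$-factor, and the symmetrization is trivial): one obtains $A_{\pm[i;i+1),k} = -(z_i\oq^{2i/n})^k$, which a direct comparison shows is a scalar multiple of $\bA_{\pm[i;i+1)}^{(k)}$ (the specialization of \eqref{eqn:shuf a} at $\mu = 1/k$), and is thus manifestly in $\CS^\uparrow$.

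For the inductive step, suppose the claim is known for all intervals strictly shorter than $\ell$. The key observation is the elementary rescaling identity
$$1 - \frac{z_a q^2}{z_{a+1}} = -\frac{z_a q^2}{z_{a+1}} \Big(1 - \frac{z_{a+1}}{z_a q^2}\Big),$$
which converts the denominator of $A_{\pm[i;j),k}$ into that of $\bA^\mu_{\pm[i;j)}$ (with $\mu = (j-i)/k$), at the cost of a product of monomials. Combined with a partial-fraction analysis that reconciles the numerator $-(z_i\oq^{2i/n})^k$ of $A_{\pm[i;j),k}$ (concentrated on a single variable) with the ``balanced'' numerator of $\bA^\mu_{\pm[i;j)}$, this lets me write
$$A_{\pm[i;j),k} = c\cdot \bA_{\pm[i;j)}^{(k)} + (\textrm{correction terms}),$$
for an explicit scalar $c$, where the correction terms are shuffle products of $\bA_{\pm[i';j')}^{(k')}$'s on strictly shorter intervals, shorter $A$'s, and Cartan currents $\psi^+$; each such summand is in $\CS^\uparrow$ by the induction hypothesis and by the inclusion of the $\psi_{i',k'}^+$'s among the defining generators of $\CS^\uparrow$. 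In practice, I would produce this identity by computing an explicit shuffle product of the form $\bA_{\pm[i;i+1)}^{(\ell_0)} * A_{\pm[i+1;j),k-\ell_0}$ for a well-chosen $\ell_0$ and showing, by analyzing its symmetrization, that it reproduces $A_{\pm[i;j),k}$ up to inductively controlled remainders.

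The main obstacle is the combinatorial bookkeeping entailed by the symmetrization. The shuffle product formula produces many terms after expanding the $\prod\zeta$-factors and averaging over permutations; one must identify precisely which terms cancel by the wheel conditions of Definition \ref{def:shuf classic} and which ones reassemble into the predicted correction shape. Additional care is needed to track the Cartan factors $\psi^+$ generated whenever variables are moved past one another: fortunately, such factors are themselves among the generators of $\CS^\uparrow$, so they can be absorbed into the final presentation. Beyond this bookkeeping, the strategy parallels the standard PBW-basis constructions for shuffle algebras developed in \cite{PBW} and \cite{Tale}.
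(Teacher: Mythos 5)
Your base case is correct: with $j=i+1$ the denominator and $\zeta$--product in \eqref{eqn:new shuf} are empty and $A_{\pm[i;i+1),k} = -(z_i\oq^{2i/n})^k = q^2\oq^{2/n}\,\bA_{\pm[i;i+1)}^{(k)}$. But the inductive step is not actually carried out, and as sketched it rests on two misconceptions. First, the shuffle product $*$ of Definition \ref{def:shuf classic} is a purely rational-function operation within $\CS^\pm$; multiplying two symmetrized rational functions never produces Cartan factors $\psi^+_{i',k'}$. Those arise only from the cross-relations between $\CS^+$, $\CS^0$ and $\CS^{-,\op}$ inside $\CS$, which are not invoked here, so the worry about ``tracking Cartan factors generated whenever variables are moved past one another'' is misplaced. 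Second, the wheel conditions characterize membership in $\CS^\pm$; they do not cause terms of a shuffle product to cancel against one another, so they are not the mechanism by which your expansion would simplify.

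The paper itself cites \cite{Par} for this statement and gives no proof, but the very next proposition supplies an explicit closed formula (\eqref{eqn:eq plus}--\eqref{eqn:eq minus}) that immediately implies it: $A_{\pm[i;j),k}$ equals a sum over all chains $j=c_0>c_1>\cdots>c_t=i$ with strictly decreasing slopes $\tfrac{c_{s-1}-c_s}{k_s}$ of the shuffle products $\bA_{[c_1;c_0)}^{(k_1)}\cdots\bA_{[c_t;c_{t-1})}^{(k_t)}$, with explicit $p$--power coefficients, established as a single rational-function identity via formula (7.26) of \cite{W}. Note that this decomposition contains only the generators $\bA^{(k')}_{\pm[i';j')}$; no $\psi^+$'s and no non-generator $A$'s appear, so no induction hypothesis is needed. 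Your proposed one-step recursion $\bA_{\pm[i;i+1)}^{(\ell_0)}*A_{\pm[i+1;j),k-\ell_0}$ for a single well-chosen $\ell_0$ is not shown to reproduce $A_{\pm[i;j),k}$ up to shorter corrections, and a length-$1$ split is not the shape of the true answer: the decomposition runs over sub-arcs of all lengths, and grouping that chain sum by its first factor $\bA_{[c_1;j)}^{(k_1)}$ does not simply yield $A_{[i;c_1),k-k_1}$ because the remaining slopes inherit an upper bound from the first step. Without either verifying that some one-step recursion genuinely closes (perhaps by introducing and controlling slope-restricted variants of $A_{\pm[i;j),k}$), or else producing the full chain decomposition directly, the inductive step is a gap.
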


\subsection{} Let $\wCS^\uparrow$ denote the completion which corresponds to $\wCA^+$ under the isomorphism $\phi$ of Theorem \ref{thm:two}, see (2.31) of \cite{Par}. We will consider the following elements:
$$
\tilde{W}_{ij}^{(k)} = \sum_{a+b+c = k}^{a, b , c \geq 0} \sum_{s \leq \min(i,j)} (-1)^k (-q^3 \oq^{\frac 2n})^{j-s} p^{\frac {2(k-1)(i-s)-2(k-1)\bari-2\barj}n}
$$
\begin{equation}
\label{eqn:tilde w}
\frac 1{\psi_i}  A_{[s;i),a} \psi_s \cdot \frac {\psi_{s,b}^+}{\psi_s} \oq^{\frac {2sb}n} \cdot A_{-[s;j),c}
\end{equation}
of $\widehat{\CS}^\uparrow$, for all $(i,j) \in \zzz$ and $k \in \BN$. Meanwhile, applying $\phi$ to \eqref{eqn:def w} yields:
$$
\phi \left( W_{ij}^{(k)} \right) = \sum_{k_1,...,k_t \in \BN}^{k_1+...+k_n = k} \sum^{i = c_0,c_1,...,c_{t-1},c_t = j}_{\frac {c_0-c_1}{k_1} > ... > \frac {c_{t-1}-c_t}{k_t}} \phi \left( F_{c_0c_1}^{(k_1)} \right) ... \phi \left(F_{c_{t-1}c_t}^{(k_t)} \right) p^{\frac {2(\alpha(v) - (k-1)\bari - \barj)}n}
$$
$$
= (-1)^k \sum^{k_1,...,k_u,...,k_t \in \BN}_{k_1+...+k_u+...+k_t = k} \sum^{i = c_0,c_1,...,c_{u-1} = c_{u},...,c_{t-1}, c_t = j}_{\frac {c_0-c_1}{k_1} > ... > \frac {c_{u-2}-c_{u-1}}{k_{u-1}}  > 0 > \frac {c_u-c_{u+1}}{k_{u+1}} > ... > \frac {c_{t-1}-c_t}{k_t}} 
$$
$$
p^{\frac {2(\alpha(v) - (k-1)\bari - \barj)}n} \cdot \frac 1{\psi_{i}} \bA_{[c_1;c_0)}^{(k_1)} ... \bA_{[c_{u-1};c_{u-2})}^{(k_{u-1})} \psi_{c_{u-1}} p^{\frac {2(k_1+...+k_{u-1})}n} \cdot 
$$
$$
\cdot \frac {\psi_{c_u,k_u}^+}{\psi_{c_u}} \oq^{\frac {2c_uk_u}n} \cdot \bA_{-[c_u;c_{u+1})}^{(k_{u+1})} ... \bA_{-[c_{t-1};c_t)}^{(k_t)} ( -q^3 \oq^{\frac 2n} )^{c_t-c_u} p^{\frac {2(k_{u+1}+...+k_t)}n}
$$
Letting $s = c_{u-1} = c_u$, $a = k_0+...+k_{u-1} $, $b=k_u$, $c = k_{u+1}+...+k_t$, we note that the formula above matches \eqref{eqn:tilde w}, thus proving Proposition \ref{prop:in fact}, once we establish: \\

\begin{proposition}

We have the following equality in $\CS^+$:
\begin{equation}
\label{eqn:eq plus}
\mathop{\sum^{j = c_0,c_1,...,c_{t-1}, c_t = i}_{\frac {c_0-c_1}{k_1} > ... > \frac {c_{t-1}-c_t}{k_t}  > 0}}^{t,k_1,...,k_t \in \BN}_{k_1+...+k_t = k} p^{\frac {2\alpha(v) -2(k-1)(j-i)+2k}n} \bA_{[c_1;c_0)}^{(k_1)} ... \bA_{[c_t;c_{t-1})}^{(k_t)}  = A_{[i;j),k}
\end{equation}
as well as the following equality in $\CS^-$:
\begin{equation} 
\label{eqn:eq minus}
\mathop{\sum^{i = c_0,c_1,...,c_{t-1}, c_t = j}_{0 > \frac {c_0-c_1}{k_1} > ... > \frac {c_{t-1}-c_t}{k_t}}}^{t,k_1,...,k_t \in \BN}_{k_1+...+k_t = k} p^{\frac {2\alpha(v)+2k}n} \bA_{-[c_0;c_1)}^{(k_1)} ... \bA_{-[c_{t-1};c_{t})}^{(k_t)} =  A_{-[i;j),k} 
\end{equation}
for all $k\in \BN$ and $(i < j) \in \zzz$. \\

\end{proposition}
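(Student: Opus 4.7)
The plan is to establish both \eqref{eqn:eq plus} and \eqref{eqn:eq minus} as identities of rational functions inside $\CS^\pm$, by matching the common symmetrization kernel of the two sides and reducing to a combinatorial identity between the monomial weights. I describe the approach for \eqref{eqn:eq plus}; the proof of \eqref{eqn:eq minus} is analogous via the involution exchanging $\CS^+$ and $\CS^-$, which reverses the sign of slopes and, as visible from the definitions \eqref{eqn:shuf a} and \eqref{eqn:shuf b}, swaps the roles of the floor and ceiling functions determining the staircase exponents.

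First I would expand each summand $\bA^{(k_1)}_{[c_1;c_0)} * \cdots * \bA^{(k_t)}_{[c_t;c_{t-1})}$ on the LHS using the shuffle product formula in $\CS^+$. Because the intervals $[c_s;c_{s-1})$ partition $[i;j)$ into consecutive segments, the external $\zeta$-factors produced by the shuffle product between variables of different $\bA$-factors glue with the internal $\zeta$-factors of each factor to reproduce the full $\prod_{i \leq a < b < j}\zeta(z_b/z_a)$, while the denominator chain $\prod_{a=i}^{j-2}(1 - z_{a+1}/(z_a q^2))^{-1}$ assembles from the sub-chains without generating new poles at the boundary positions $a=c_s$. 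Thus after moving everything inside a single $\sym$, each summand becomes the same "kernel" $K(z) = \prod\zeta(z_b/z_a)\cdot\prod(1 - z_{a+1}/(z_a q^2))^{-1}$ multiplied by a monomial $M_v(z)$ depending only on the composition $v = \{(c_0-c_1,k_1),\ldots,(c_{t-1}-c_t,k_t)\}$. Consequently \eqref{eqn:eq plus} reduces to showing that $\sym\bigl[K(z) \sum_v p^{2\alpha(v)/n} M_v(z)\bigr]$ coincides with $\sym\bigl[K(z) \cdot (-(z_i\oq^{2i/n})^k)\bigr]$ up to the overall $p$-power prefactor on the LHS.

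I would prove this monomial identity by induction on the number $t$ of composition pieces. The base case $t=1$ is essentially the content of Proposition 5.21 of \cite{Tale}: the staircase monomial of $\bA^{(k)}_{[i;j)}$ and the single monomial $(z_i\oq^{2i/n})^k$ of $A_{[i;j),k}$ become equal after $\sym$-averaging, up to the known $p$-shift that matches $\alpha(v)$ on the trivial one-piece composition. For the inductive step, I would merge the final two adjacent shuffle factors with slopes $\mu_{t-1}>\mu_t$ into a single interval of combined slope $(k_{t-1}\mu_{t-1}+k_t\mu_t)/(k_{t-1}+k_t)$, using a telescoping argument in the spirit of formula (3.47) of \cite{W surf}. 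The crucial combinatorial fact enabling this telescoping is that $\alpha(v)$ is additive under concatenation of consecutive concave segments, which when interpreted as the signed lattice area under the broken line ensures that the $p^{2\alpha(v)/n}$ weights interpolate consistently across the merge. Iterating this reduction $t-1$ times reduces the LHS to a single $A_{[i;j),k}$.

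The principal obstacle is the telescoping step: verifying that, after $\sym$, the weighted sum of monomials from all concave refinements of a two-piece composition collapses to the monomial of the corresponding single-piece composition. This is a delicate but elementary symmetrization identity, which can be handled either by direct comparison of coefficients on the relevant symmetric-group orbits of the generating monomials, or by recognizing it as a manifestation of the Pieri-type structure on the slope subalgebras $\CB_\mu\subset\CS^\uparrow$ recalled in Subsection~\ref{sub:uuu}. The identity \eqref{eqn:eq minus} then follows by the same template, with the adjustments to sign and orientation dictated by the definition of $\bA^\mu_{-[i;j)}$.
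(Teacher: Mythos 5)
Your opening step — expanding each summand $\bA^{(k_1)}_{[c_1;c_0)} * \cdots * \bA^{(k_t)}_{[c_t;c_{t-1})}$, observing that the $\zeta$-factors assemble into the common kernel $\prod_{i \le a < b < j}\zeta(z_b/z_a)$ over the common denominator chain, and isolating a numerator to compare — is exactly how the paper's proof begins. But the reduction you state is not quite right, and the key step you propose to finish with is left as an open ``obstacle'' rather than proved.

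Concretely, when the denominator chains of the individual $\bA$-factors are combined over the full chain $\prod_{a=i}^{j-2}(1 - z_{a+1}/(z_a q^2))^{-1}$, the numerator of each summand is not a monomial $M_v(z)$: it is the staircase monomial multiplied by the ``broken chain'' factors $\prod_{s=1}^{t-1}\bigl(1 - z_{c_s}/(z_{c_s-1}q^2)\bigr)$ at the boundary positions. Your $M_v$ drops these, so the identity you set out to prove is not the one that actually needs proving. More seriously, your proposed inductive telescoping argument is not carried out: it is unclear what the induction parameter is (the LHS sums over all $t$, so ``induction on $t$'' does not parse as stated), the claimed additivity of $\alpha(v)$ under concatenation is not the right shape of the needed weight identity, and you explicitly flag the merge-of-two-adjacent-pieces step as an obstacle rather than verifying it. That step is the entire content of the proposition.

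The paper avoids symmetrization-level bookkeeping altogether: it shows that the full numerator sum $X(z_i,\ldots,z_{j-1})$ — including the $p^{2\alpha(v)/n}$-weights and the broken-chain factors — collapses to a \emph{single} monomial, $(z_i\oq^{2i/n})^k\cdot(-z_{j-1})/(z_i(-q^2)^{j-i-1})$, as an identity of rational functions \emph{before} applying $\sym$. This is done by a change of variables $c_{s-1}-c_s \leadsto d_s$, $z_{c_s-\bullet}\leadsto q^{2(c_s-\bullet)}w_{d_1+\cdots+d_s+\bullet}$ that reduces the sum to formula (7.26) of \cite{W}, together with a one-line combinatorial identity matching the exponent $\alpha(v)$. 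No induction or Pieri-type structure on $\CB_\mu$ is needed. If you want to salvage your approach, the right target is the polynomial identity for $X$ itself, not an identity after $\sym$; but you would still need to supply a proof of the merge step, which the paper bypasses entirely by the substitution into formula (7.26) of \cite{W}.
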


\begin{proof} We will only take care of the $\pm = +$ case, as the $\pm = -$ case is analogous, and left to the interested reader. According to formula \eqref{eqn:shuf a}, we have:
\begin{equation}
\label{eqn:lhs rhs}
\text{LHS of \eqref{eqn:eq minus}} = \sym \left[ \frac {X(z_i,...,z_{j-1})}{\left(1 - \frac {z_{i+1}}{z_i q^2}\right) ... \left(1 - \frac {z_{j-1}}{z_{j-2} q^2}\right)} \prod_{i\leq a < b < j} \zeta \left( \frac {z_b}{z_a} \right)  \right] \in \CS^-
\end{equation}
where:
$$
X(z_i,...,z_{j-1}) = \sum^{k_1,...,k_t \in \BN}_{k_1+...+k_t = k} \sum^{j = c_0,c_1,...,c_{t-1}, c_t = i}_{\frac {c_0-c_1}{k_1} > ... > \frac {c_{t-1}-c_t}{k_t} > 0} p^{\frac {2\alpha(v)-2(k-1)(j-i)+2k}n} (-q^2\oq^{\frac 2n})^{i-j}
$$
$$
\prod_{s=1}^t \prod_{a=c_s}^{c_{s-1}-1} (z_a \oq^{\frac {2a}n})^{\left \lfloor \frac {a-c_s+1}{c_{s-1}-c_{s}}  k_s \right \rfloor - \left \lfloor \frac {a-c_s}{c_{s-1}-c_s}  k_s \right \rfloor} \prod_{s=1}^{t-1} \left(1 - \frac {z_{c_s}}{z_{c_s-1}q^2} \right)
$$
Then \eqref{eqn:eq plus} follows from the fact that $p = q^n \oq$ and the identity:
\begin{equation}
\label{eqn:of}
X(z_i,...,z_{j-1}) = (z_i \oq^{\frac {2i}n} )^{k} \frac {-z_{j-1}}{z_i(-q^2)^{j-i-1}} 
\end{equation}
If we perform the substitutions:
$$
c_{s-1}-c_s \leadsto d_s, \quad z_{c_s-\bullet} \leadsto q^{2(c_s-\bullet)} w_{d_1+...+d_s+\bullet}
$$
then \eqref{eqn:of} is an immediate consequence of formula (7.26) of \cite{W} and the following straightforward combinatorial identity:
$$
\sum_{s=1}^t \sum_{a=c_s}^{c_{s-1}-1} a \left( \left \lfloor \frac {a-c_s+1}{c_s-c_{s-1}} k_s \right \rfloor - \left \lfloor \frac {a-c_s}{c_s-c_{s-1}} k_s \right \rfloor \right) =  \sum_{s=1}^t \left( c_{s-1}k_s - \sum_{i=1}^{c_{s-1}-c_s} \left \lfloor \frac {ik_s}{c_{s-1}-c_s} \right \rfloor \right)
$$
$$
= \sum_{s=1}^t \left( c_{s-1}k_s - \frac {k_s(c_{s-1} - c_s) + k_s - (c_{s-1}-c_s) + \gcd(k_s,c_s-c_{s-1})}2 \right) = k j - k - \alpha(v)
$$

\end{proof}

\end{document}